\newcommand*{\R}{\ensuremath{\mathbb{R}}}
\renewcommand*{\L}{\ensuremath{\mathbb{L}}}
\newcommand{\ms}{\ensuremath{\mathcal{S}}}
\newcommand{\mr}{\ensuremath{\mathcal{R}}}
\newcommand{\mfr}{\ensuremath{\mathfrak{R}}}
\newcommand{\mz}{\ensuremath{\mathcal{Z}}}
\newcommand*{\CC}{\ensuremath{\mathbf{C}}}
\newcommand*{\Z}{\ensuremath{\mathbb{Z}}}
\newcommand*{\T}{\ensuremath{\mathscr{T}}}
\newcommand*{\md}{\ensuremath{\mathscr{D}}}
\newcommand*{\F}{\ensuremath{\mathcal{F}}}
\newcommand*{\A}{\ensuremath{\mathcal{A}}}
\newcommand*{\oo}{\ensuremath{C}}
\newcommand*{\mfl}{\ensuremath{\mathfrak{L}}}
\newcommand*{\mflt}{\ensuremath{\mathfrak{L}_t}}
\newcommand*{\D}{\ensuremath{\mathbb{D}}}
\newcommand{\C}[1]{\mathbf{C^{#1}}}
\newcommand{\lnorm}[1]{\ensuremath{\mathbb{L}}^{#1}}
\newcommand{\llnorm}[1]{\ensuremath{\left\|#1\right\|_{\mathbb{L}^1}}}
\newcommand{\brz}{B_r(0)}
\renewcommand{\a}{\ensuremath{\alpha}}
\renewcommand{\b}{\ensuremath{\beta}}
\newcommand{\la}{\ensuremath{\lambda}}
\newcommand{\ga}{\gamma}
\newcommand{\om}{\omega}
\newcommand{\mmz}{\ensuremath{\mathit{z}}}
\newcommand{\Vl}{\ensuremath{V^{L}}}
\newcommand{\Ql}{\ensuremath{Q^{L}}}
\newcommand{\upl}{\ensuremath{\Upsilon^{L}}}
\newcommand{\Vr}{\ensuremath{V^{R}}}
\newcommand{\Qr}{\ensuremath{Q^{R}}}
\newcommand{\upr}{\ensuremath{\Upsilon^{R}}}
\newcommand{\uh}{\ensuremath{u^h}}
\newcommand{\uhi}{\ensuremath{u^{h_i}}}
\newcommand{\ueh}{\ensuremath{u^{\e,h}}}
\newcommand{\uehn}{\ensuremath{u^{\e_{\nu},h}}}
\newcommand{\veh}{\ensuremath{v^{\e,h}}}
\newcommand{\uhe}{\ensuremath{u^{h,\e}}}
\newcommand{\he}{\ensuremath{(h,\e)}}
\newcommand{\eh}{\ensuremath{(\e,h)}}
\newcommand{\bu}{\ensuremath{\bar u}}
\newcommand{\ums}{\ensuremath{u^{-}}}
\newcommand{\ups}{\ensuremath{u^+}}
\newcommand{\vms}{\ensuremath{v^{-}}}
\newcommand{\vps}{\ensuremath{v^+}}
\newcommand{\vmps}{\ensuremath{v^\pm}}
\newcommand{\umps}{\ensuremath{u^\pm}}
\newcommand*{\ul}{\ensuremath{u^\mathrm{L}}}
\newcommand*{\um}{\ensuremath{u^\mathrm{M}}}
\newcommand*{\ur}{\ensuremath{u^\mathrm{R}}}
\newcommand*{\unuh}{\ensuremath{u^{\nu,h}}}
\newcommand*{\unuoh}{\ensuremath{u^{\nu_1,h}}}
\newcommand*{\unuth}{\ensuremath{u^{\nu_2,h}}}
\newcommand*{\e}{\ensuremath{\varepsilon}}
\newcommand{\ue}{\ensuremath{u^{\varepsilon}}}
\newcommand{\vhe}{\ensuremath{v^{h,\varepsilon}}}
\newcommand*{\iue}{\ensuremath{\bar u^{\varepsilon}}}
\newcommand*{\iuhe}{\ensuremath{\bar u^{h,\varepsilon}}}
\newcommand*{\gohe}{\ensuremath{ g_1^{h,\varepsilon}}}
\newcommand*{\gthe}{\ensuremath{ g_2^{h,\varepsilon}}}
\newcommand*{\iu}{\ensuremath{\bar u}}
\newcommand*{\iv}{\ensuremath{\bar v}}
\newcommand{\enu}{\varepsilon_{\nu}}
\newcommand{\tu}{\tilde u}
\newcommand{\bxa}{\ensuremath{\bar{x}_{\a}}}
\newcommand{\ka}{\ensuremath{k_{\alpha}}}
\newcommand{\tum}{\ensuremath{\tilde{u}^M}}
\newcommand{\tur}{\ensuremath{\tilde{u}^R}}
\newcommand*{\go}{\ensuremath{g_1}}
\newcommand*{\gt}{\ensuremath{g_2}}
\newcommand*{\goe}{\ensuremath{g_1^{\varepsilon}}}
\newcommand*{\gte}{\ensuremath{g_2^{\varepsilon}}}
\newcommand*{\got}{\ensuremath{g_1(t)}}
\newcommand*{\gtt}{\ensuremath{g_2(t)}}
\newcommand{\geo}{\ensuremath{g^{\varepsilon}_1}}
\newcommand{\get}{\ensuremath{g^{\varepsilon}_2}}
\newcommand{\gie}{\ensuremath{g^{\varepsilon}_i}}
\newcommand*{\ith}{\ensuremath{i\text{-th}}}
\renewcommand*{\P}{\mathcal{P}}
\newcommand*{\NP}{\mathcal{N}\mathcal{P}}
\newcommand{\xat}{\ensuremath{x_\alpha(t)}}
\newcommand{\sa}{\ensuremath{\sigma_{\alpha}}}
\renewcommand{\sb}{\ensuremath{\sigma_{\beta}}}
\newcommand{\lug}{\Lambda(\iu,\go,\gt)}
\newcommand{\hatt}{\ensuremath{\hat{\tau}}}
\newcommand*{\yo}{\ensuremath{y_1}}
\newcommand{\tqi}{\ensuremath{\tilde{q}_i}}
\newcommand{\xa}{x_{\alpha}}
\newcommand{\xb}{x_{\beta}}
\newcommand{\dbxa}{\dot{\bar{x}}_{\alpha}}
\newcommand{\qbpi}{{q}^{b}_i(t)}
\newcommand{\tqapi}{\tilde{q}_i(t,\bar{x}_{\alpha}(t)+)}
\newcommand{\tqami}{\tilde{q}_i(t,\bar{x}_{\alpha}(t)-)}
\newcommand{\tqbpi}{\tilde{q}^{b}_i(t)}
\newcommand{\tqyomi}{\tilde{q}_i(t,y_1(t)-)}
\newcommand{\wi}{W_i}
\newcommand{\wapi}{W_i(t,\bar{x}_{\alpha}(t)+)}
\newcommand{\wami}{W_i(t,\bar{x}_{\alpha}(t)-)}
\newcommand{\wbpi}{W_i(t,0+)}
\newcommand{\wyomi}{W_i(t,y_1(t)-)}
\newcommand{\lami}{\lambda_i(t,\bar{x}_{\alpha}(t)-)}
\newcommand{\lapi}{\lambda_i(t,\bar{x}_{\alpha}(t)+)}
\newcommand{\lbpi}{\lambda_i(t,0+)}
\newcommand{\lyomi}{\lambda_i(t,y_1(t)-)}
\DeclareMathOperator*{\tvv}{\mathrm{Tot.Var.}}
\DeclareMathOperator*{\cl}{cl}
\def\rn#1{\mathbb{R}^{#1}}
\newcommand{\tv}[2]{\ensuremath{\tvv_{#2}{\left(#1\right)}}}
\newcommand{\sumin}{\sum^n_{i=1}}
\newcommand{\phu}[1]{\ensuremath{\Phi_h(#1)}}
\newcommand{\ab}[1]{\ensuremath{\left|#1\right|}}
\newcommand{\lrb}[1]{\ensuremath{\left(#1\right)}}
\newcommand{\ift}{\text{if}}
\newcommand*{\pt}{\ensuremath{\partial_t}}      %partial differential notation
\newcommand*{\px}{\ensuremath{\partial_x}}  
\begin{document}

\title{Local exact one-sided boundary null controllability of entropy solutions to a class of hyperbolic systems of balance laws%\thanks{Grants or other notes
%about the article that should go on the front page should be
%placed here. General acknowledgments should be placed at the end of the article.}
}
%\subtitle{Do you have a subtitle?\\ If so, write it here}

\titlerunning{Local exact one-sided boundary null controllability of entropy solutions}        % if too long for running head

\author{Tatsien Li         \and
        Lei Yu %etc.
}

%\authorrunning{Short form of author list} % if too long for running head

\institute{Tatsien Li \at
              School of Mathematical Sciences, Fudan University, Shanghai 200433, China \\
              \email{dqli@fudan.edu.cn}            \\
            \emph{Supported by the National Basic Research Program of China (No 2013CB834100) and the National Natural Science Foundation of China (No. 11121101).} %  if needed
           \and
           Lei Yu (corresponding author)\at
             School of Mathematical Sciences, Tongji University, Shanghai 200092, China\\
             \email{yu\_lei@tongji.edu.cn}\\
              \emph{Supported by
              	the National Natural Science Foundation of China (No. 11501122).}
}

\date{Received: date / Accepted: date}
% The correct dates will be entered by the editor

\maketitle
\hyphenpenalty=5000
\tolerance=1000
\maketitle

\begin{abstract}
We consider $ n\times n $ hyperbolic systems of balance laws in one-space dimension
\begin{equation}
\label{e:abst}
\pt H(u) +\px F(u)=G(u),\qquad t> 0,\ 0<x<L
\end{equation} 
under the assumption that all negative (resp. positive) characteristics are linearly degenerate. We prove the local exact one-sided boundary null controllability of entropy solutions to this class of systems, which generalizes the corresponding results obtained in \cite{Li-Yu_OC} from the case without source terms to that with source terms. In order to apply the strategy used in \cite{Li-Yu_OC}, we essentially modify the constructive method by introducing two different kinds of approximate solutions to system \eqref{e:abst} in the forward sense and to the system
\[
\px F(u)+\pt H(u)=G(u),\qquad 0<x<L,\ 0<t<T
\]
in the rightward (resp. leftward) sense, respectively, and we prove that their limit solutions are equivalent to some extent.
\keywords{hyperbolic systems of balance laws \and one-sided boundary null controllability \and  semi-global entropy solutions}
\subclass{95B05, 35L60}
\end{abstract}

\section{Introduction}
%\label{s:introduction}
In this paper, we study the local exact one-sided boundary null controllability of entropy solutions to $n\times n$ quasilinear hyperbolic system of balance laws in one space dimension: 
\begin{equation}
\label{e:intro}
\pt H(u)+\px F(u)=G(u),\qquad t> 0,\ 0<x<L,
\end{equation}
where $u$ is an $n$-vector valued unknown function of $ (t,x) $, $H,F$ and $G $ are smooth $n$-vector valued functions of $ u $, defined on a ball $B_r(0)$ centered at the origin in $\rn n$ with suitable small radius $r>0$. Moreover, we assume that 
\[
G\in \CC^2, \quad G(0)=0,
\]	
which means that $ u=0 $ is an equilibrium state.
%\subsection{Preliminary assumptions and definitions}
%\label{s:problem}
%\subsection{Statement of the problem and main results}
For system \eqref{e:intro}, we give the following assumptions: 

$\textbf{(H1)}$ System \eqref{e:intro} is strictly hyperbolic, that is, for any given $u\in B_r(0)$, the matrix $DH(u)$ is non-singular and the matrix $(DH(u))^{-1}DF(u)$ has $n$ distinct real eigenvalues $  \la_i(u)$ $ (i=1,...,n) $: 
\begin{equation*}
\label{h:ev}
\la_1(u)<\cdots<\la_n(u),  \qquad \forall u\in \brz. 
\end{equation*}

$\textbf{(H2)}$ There are no zero eigenvalues, without loss of generality, in this paper we assume that there exist an $m\in \{1,...,n-1\}$ and a constant $c>0$, such that
\begin{equation}
\label{h:nonzero-char}
\lambda_m(u)<-c<0<c<\lambda_{m+1}(u), \qquad \forall u \in \brz.
\end{equation}
Under this assumption $ DF(u) $ is also a non-singular matrix.

$\textbf{(H3)}$ All characteristics are either genuinely nonlinear or linear degenerate in the sense of Lax \cite{Lax1987}. Recall that the $\ith$ characteristic is \emph{genuinely nonlinear} if
\begin{equation*}
\label{h:GN}
D \lambda_i(u)\cdot r_i(u) \ne 0,\qquad \forall u\in \brz,
\end{equation*}
while, the $\ith$ characteristic is \emph{linearly degenerate} if
\begin{equation*}
\label{h:LD}
D \lambda_i(u)\cdot r_i(u) \equiv 0,\qquad \forall u\in \brz,
\end{equation*}
where $ r_i(u) $ denotes the eigenvector of $ (DH(u))^{-1}DF(u) $, corresponding to $ \lambda_i(u)\ (1\le i\le n) $.

$\textbf{(H4)}$ Assume that system \eqref{e:intro} possesses an \emph{\textbf{entropy-entropy flux}}. Recall that  $ (\eta(u),\zeta(u)) $ is an entropy-entropy flux of system \eqref{e:intro} if  $\eta:\brz\to \R$ is a continuously differentiable convex function and  $\zeta:\brz\to \R$ is a continuously differentiable function, which satisfies
\begin{equation*}
\label{d:nc-entropy}
\begin{split}
D\eta(u)(DH(u))^{-1} DF(u) = D  \zeta(u), \quad \forall u\in \brz.
\end{split}
\end{equation*}

\vspace{12pt}

By \eqref{h:nonzero-char}, the boundaries $ x=0 $ and $ x=L $ are non-characteristic. We prescribe the following general nonlinear boundary conditions:
\begin{align}\label{bc:intro}
\begin{cases}
x=0: & b_1(u)=\go(t),\\ 
x=L: &  b_2(u)=\gt(t),
\end{cases}
\end{align}
where $g_1:\R^+\to \rn{n-m}$, $g_2:\R^+\to \rn{m}$ are given functions of $ t $, and $b_1 \in \C1(B_r(0)$; $\rn{n-m}),\ b_2 \in \C1(B_r(0); \rn{m})$. Here the value of $u(t,0)$ and $ u(t,L)$ should be understood as the inner trace of the function $u(t,x)$ on the boundaries $x=0$ and $ x=L$, respectively. In order to guarantee the well-posedness for the forward mixed initial-boundary value problem of system \eqref{e:intro}, we assume that 

$\textbf{(H5)}$ $ b_1 $ and $ b_2 $ satisfy the following conditions, respectively (see \cite{Li-Yu_boundary-value}):
\begin{equation}
\label{h:bc}
\begin{split}
& \det \left[
Db_1(u) \cdot r_{m+1}(u),\cdots,Db_1(u) \cdot r_{n}(u)
\right] \neq 0, \\
& \det \left[
Db_2(u) \cdot r_{1}(u), \cdots,Db_2(u) \cdot r_{m}(u)
\right] \neq 0,
\end{split}
\quad \forall u\in \brz.
\end{equation}
%Without loss of generality, we may assume that $b_i(0)=0\ (i=1,2)$. 

Now, we give the definition of entropy solution to the mixed initial-boundary value problem of system \eqref{e:intro}:
\begin{definition}
	\label{d:es}	
	For any given $T>0$, a function $u=u(t,x)\in \lnorm{1}((0,T)\times (0,L))$ is an \emph{entropy solution} to system \eqref{e:intro} on the domain $\D_T:=\{ (t,x)\ |\ 0< t< T,\ 0<x<L\}$ if
	\begin{enumerate}[(1)]
		\item the function $u$ is a weak solution to \eqref{e:intro} in the sense of distributions on the domain $ \D_T $, that is, for every $\phi \in C_c^1(\D_T)$ we have
		\[
		\int^T_0\int^L_0\pt\phi(t,x)H(u(t,x))+\px\phi(t,x)F(u(t,x))+\phi(t,x)G(u(t,x))dxdt=0,
		\]
		where $C^1_c(\D_T)$ denotes the set of $C^1$-functions with compact support in $\D_T$;
		\item the function $u$ is \emph{entropy-admissible} in the sense that there exists an entropy-entropy flux $ (\eta(u),\zeta(u)) $ for system \eqref{e:intro}, such that for every non-negative function $\phi \in C_c^1(\D_T)$, we have
		\begin{equation*}
		\begin{split}
		\label{d:entropy}
		\int^T_0\int^L_0\pt\phi(t,x) \eta(u(t,x))&+\px\phi(t,x) \zeta(u(t,x))dxdt\\
		&+\phi(t,x) D\eta(u(t,x))G(u(t,x))\geq 0.
		\end{split}
		\end{equation*}
	\end{enumerate}
	
	Moreover, if $ u $ also satisfies the initial condition
	$ \lim\limits_{t\to0+}u(t,x)=\iu(x)$ for a.e. $x\in (0,L)$ and the boundary conditions
	\begin{equation*}
	\lim_{x\to 0+}b_1(u(t,x))=\got,\qquad \lim_{x\to L-}b_2(u(t,x))=\gtt,\qquad \text{a.e. $t \in (0,T)$},
	\end{equation*}
	we say that $ u $ is \emph{an entropy solution to the mixed initial-boundary value problem} of system \eqref{e:intro} on the domain $\D_T$, associated with the initial condition
	\begin{equation}\label{ic:intro}
	t=0:\ u=\iu,\quad 0<x<L
	\end{equation}
	and the boundary conditions \eqref{bc:intro} for $ t\in (0,T) $. 
\end{definition}

%\subsection{Previous studies and main result}

The study of exact boundary controllability of entropy solutions to quasilinear hyperbolic systems of conservation laws was initiated by Bressan and Coclite and in \cite{Bressan_boundary-control-CL} they proved that for a class of quasilinear hyperbolic system of conservation laws (including the Diperna's system), there exists a class of initial data with infinite many jumps and with small total variation, such that it is impossible to drive the entropy solution to any constant state in finite time by boundary controls as far as the total variation of the entropy solution remains small. This implies that one can not expect the exact boundary controllability of entropy solutions for general quasilinear hyperbolic systems of conservation laws as in the case of classical solutions (see \cite{Li_controllability-book,Li_cam2002,Li_exact-controllability-quasilinear,Li_strong-weak}). However, some related results were obtained for special models of hyperbolic conservation laws, for example, scalar equation \cite{AnconaMarson1998,glass-burgers2007,Horsin,Leautaud-vanishing2012}, Temple system \cite{AnconaCoclite2005} and Euler equations \cite{Glass2007,Glass2014}. 

Recently, in \cite{Li-Yu_OC,Yu_CM2016}, we proved the local exact one-sided boundary null controllability of entropy solutions to
a class of general quasilinear hyperbolic systems of conservation laws ($G\equiv 0$ in \eqref{e:intro}) satisfying (H1)-(H5) under the assumption that all negative (resp. positive) characteristic families are linearly degenerate. %by means of the constructive method used in \cite{Li_controllability-book} originally for the local exact boundary controllability in the framework of classical solutions with essential modifications. 
In this paper, we will generalize the corresponding result to the case when there is a source term in system \eqref{e:intro}. It is a hard task, and to the authors' knowledges, so far there is no result concerning the exact boundary controllability of entropy solutions to quasilinear hyperbolic systems of balance laws. The main result of this paper is the following theorem.

\begin{theorem}
	\label{t:cl}
	Under assumptions (H1)-(H5) and the assumption that all negative characteristics are linear degenerate, if $ \gamma= \|G(u)\|_{W^{1,\infty}(\brz)} $ is sufficiently small and
	\begin{equation}
	\label{d:time-cl}
	T>L \left\{\frac{1}{|\lambda_m(0)|}+\frac{1}{\lambda_{m+1}(0)}\right\},
	\end{equation}
	then for any given initial data $\iu \in BV(0,L)$ with $\displaystyle\tv{\iu}{0<x<L}+|\iu(0+)|$ sufficiently small, 
	there exists a boundary control $g_2\in BV(0,T)$ with $\displaystyle\tv{g_2}{0<t<T}+|b_2(\iu(L-))-g_2(0+)|$ sufficiently small, acting on the boundary $x=L$, such that system \eqref{e:intro} together with the initial condition \eqref{ic:intro}
	and the boundary conditions
	\begin{equation*}
	%\label{bc:bdycontrol-thm}
	\begin{cases}
	x=0:\ b_1(u)=0,\\
	x=L:\ b_2(u)=g_2(t),
	\end{cases}
	\qquad t\in (0,T)
	\end{equation*}
	admits an entropy solution $ u=u(t,x) $ on the domain $\D_{T}=\{\ 0<t<T,0<x<L\}$, which satisfies the final condition:
	\begin{equation}
	\label{e:fc}
	t=T:\quad u\equiv 0, \qquad \forall x\in (0,L).
	\end{equation}\
\end{theorem}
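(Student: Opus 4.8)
The plan is to exploit the fact that, by (H2), the flux $F$ is a local diffeomorphism on $\brz$, so that system \eqref{e:intro} admits two evolutionary formulations: the usual forward-in-time one, and the \emph{sideways} one
\[
\px F(u)+\pt H(u)=G(u),\qquad 0<x<L,\ 0<t<T,
\]
in which $x$ plays the role of the evolution variable and the characteristic speeds are the reciprocals $1/\la_i(u)$. In this sideways system the $m$ negative families of \eqref{e:intro} become the families propagating to the left in $x$, and by hypothesis they are linearly degenerate, while the $n-m$ positive families propagate to the right in $x$. I would decompose $\D_T$ by generalized characteristics into a lower region, where the solution is completely determined by the initial data $\iu$ and the homogeneous condition $b_1(u)=0$ at $x=0$, and an upper region adjacent to $t=T$, where I prescribe the target state $u\equiv 0$ and reconstruct $u$ by solving the sideways system backward. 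The threshold time \eqref{d:time-cl} is precisely what guarantees that these two regions can be made to overlap: $L/\la_{m+1}(0)$ is the crossing time of the slowest rightward family and $L/|\la_m(0)|$ that of the slowest leftward family.

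For the forward region I would build an \eawts{} of \eqref{e:intro} with data $\iu$ and $b_1(u)=0$, incorporating the source by operator splitting: on each time strip, first solve the Riemann problems for the homogeneous system, then advance the piecewise-constant state by the ordinary differential equation $\dt u=(DH(u))^{-1}G(u)$ over the step. For the upper region I would build, analogously, a \emph{rightward} (resp.\ leftward) front tracking solution of the sideways system with initial datum $u\equiv 0$ on $t=T$, splitting off the sideways source $(DF(u))^{-1}G(u)$. In both constructions the decisive estimate is a uniform bound on the total variation: the assumption that the negative (resp.\ left-moving) families are linearly degenerate prevents the formation of new shocks from those waves and keeps a Glimm-type interaction functional bounded, exactly as in the case $G\equiv0$ of \cite{Li-Yu_OC}, so that the only growth of the total variation comes from the source. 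Weighting that functional by an exponential factor $e^{C\gamma t}$ (resp.\ $e^{C\gamma x}$) and using the smallness of $\gamma=\|G(u)\|_{W^{1,\infty}(\brz)}$, I would close a Gronwall inequality and obtain total variation and $\mathbb{L}^1$ stability bounds uniform in $\e$.

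The heart of the argument is to show that, after passing to the limit $\e\to0$ (by Helly's theorem together with the $\mathbb{L}^1$ contraction for balance laws with small source), the forward limit and the sideways limit coincide on the overlap of the two regions, so that they can be glued into a single $u$ which is an entropy solution of \eqref{e:intro} on all of $\D_T$. Here the linear degeneracy of the negative characteristics is again essential: along such a family the corresponding generalized Riemann invariant is transported without distortion, which makes the trace produced by the forward construction compatible with the datum imposed by the sideways construction along their common characteristics. Once the global solution $u$ is obtained, the control is read off as $g_2(t):=b_2(u(t,L-))$; its total variation is controlled by that of $u$, hence small, and $u$ automatically satisfies $b_1(u)=0$ at $x=0$, the initial condition \eqref{ic:intro}, and the final condition \eqref{e:fc}.

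I expect the main obstacle to be concentrated in this matching step, and to be twofold. First, the source term destroys the exact conservation structure that, for $G\equiv0$, made the sideways system itself a system of conservation laws; one must therefore develop the well-posedness and wave-interaction estimates for the sideways \emph{balance} law essentially from scratch and track how the two operator splittings, one in $t$ and one in $x$, interact. Second, and most delicately, proving that the two limit solutions are equivalent on the overlap requires showing that the forward entropy solution and the sideways entropy solution satisfy the same Rankine--Hugoniot and entropy inequalities across every wave front, despite being generated by genuinely different stepping schemes; the smallness of $\gamma$ and the linear degeneracy of the negative families are exactly the ingredients needed to keep the discrepancy between the two schemes of order $o(1)$ as $\e\to0$.
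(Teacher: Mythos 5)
Your proposal assembles the right raw ingredients (the sideways formulation, operator splitting for the source, Glimm functionals kept under control thanks to linear degeneracy, and reading off the control as $g_2(t)=b_2(u(t,L-))$), but its core architecture---build one solution from the data on a lower region, build a second solution from the target on an upper region, and glue them where they overlap---has two genuine gaps. First, the upper-region construction is ill-posed as stated: for the rightward system $\px F(u)+\pt H(u)=G(u)$ the line $t=T$ is a \emph{boundary} of the strip $0<t<T$, through which only the $m$ characteristics with $1/\la_i<0$ $(i\le m)$ enter the domain as $x$ increases, so one may impose only $m$ scalar conditions there (the paper imposes $l_r(u)u=0$, $r=1,\dots,m$); prescribing the full state $u\equiv 0$ on $t=T$ as an ``initial datum'' overdetermines the problem, and the alternative reading---solving backward in time from $t=T$---is forbidden for entropy solutions, which are irreversible. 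Second, and more fundamentally, the gluing step is circular: your lower solution is determined by $(\iu,\,b_1(u)=0)$ and your upper solution by the target alone, and nothing forces two independently constructed solutions to coincide on an overlap; such coincidence is essentially the controllability statement being proved. The paper's architecture avoids this: it solves the forward problem only up to $T_1=L\max_{u\in\brz}1/|\la_m(u)|$ with an \emph{artificial} boundary control $g_f$ at $x=L$ (which your forward step also omits---without data at $x=L$ the forward mixed problem on the full rectangle is not well-posed for front tracking), extracts the trace $a(t)=u_f(t,0+)$, extends it by \emph{zero} for $t>T_1$, and then solves a \emph{single} rightward problem on all of $\D_T$ with data $a(t)$ on $x=0$ and the reduced conditions at $t=0$ and $t=T$. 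The initial and final conditions are recovered not by gluing but by uniqueness/determinacy of the rightward problem (Proposition \ref{p:lueh-t}) on two \emph{disjoint} triangles containing $\{t=0\}$ and $\{t=T\}$ respectively; condition \eqref{d:time-cl} guarantees both triangles fit inside $\D_T$.

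The difficulty you correctly flag as the main obstacle---comparing limits of two genuinely different splitting schemes, one in $t$ for the forward problem and one in $x$ for the sideways one---is not overcome in the paper; it is designed away. The rightward $\eh$-splitting is taken across constant-$t$ lines (zero-waves of the rightward evolution sit at constant values of its space variable, which is $t$), with jump map $u\mapsto H^{-1}[H(u)+hG(u)]$, i.e.\ exactly the forward $\he$-splitting. Hence, under the linear degeneracy of the negative characteristics, a forward $\he$-solution literally \emph{is} a rightward $\eh$-solution (Lemma \ref{l:uhe-cv}), and equivalence of the limits (Proposition \ref{p:frs}) follows with no comparison of distinct schemes at all. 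This also exposes a flaw in your proposed matching mechanism: appealing to ``the same entropy inequalities'' for the sideways system is unfounded, since, as the paper notes, the rightward system need not admit any entropy--entropy flux even when the forward system does---which is precisely why the rightward solution is defined as a limit of $\eh$-solutions rather than through an entropy characterization.
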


Similarly, under the assumption that all positive characteristic are linearly degenerate, we can obtain the local exact one-sided boundary null controllability of entropy solutions by means of a boundary control $g_1$ acting on the boundary $x=0$, that is, for any given initial data $\iu \in BV(0,L)$ with $\displaystyle\tv{\iu}{0<x<L}+|\iu(0+)|$ sufficiently small, this class of system \eqref{e:intro} admits an entropy solution satisfying the initial condition \eqref{ic:intro}, the final condition \eqref{e:fc} and the boundary conditions
\[
\begin{cases}
x=0:\ b_1(u)=g_1(t),\\
x=L:\ b_2(u)=0,
\end{cases}
\quad t\in (0,T).
\]

%\subsection{Main ideas of proof and structure of paper}
Roughly speaking, we will prove the above theorem by applying a modified version of the constructive method, originally introduced by Li and Rao in \cite{Li_controllability-book,Li_cam2002,Li_exact-controllability-quasilinear,Li_strong-weak} to study the exact boundary controllability of classical solutions to general quasilinear hyperbolic systems. This method is based on  the following three basic ingredients:
\begin{enumerate}[(1)]
	\item The well-posedness of semi-global solutions to the mixed initial-boundary value problem for quasilinear hyperbolic systems, where the semi-global solution means a solution existing in a time interval $ [0,T] $ with some preassigned time $ T>0 $. 
	\item The fact that the semi-global solution $ u=u(t,x) $ to system \eqref{e:intro} in the forward sense on the domain $ \D_T $ is also a semi-global solution to system
	\begin{equation}
	\label{e:right-intro}
	\px F(u)+\pt H(u)=G(u)
	\end{equation}
	in the rightward sense %, that is, the solution to system \eqref{e:right-intro}
	on the same domain, where the roles of $ t $ and $ x $ are exchanged, namely, $ x $ is regarded as the ``time" variable and $ t $ is regarded as the ``space" variable. And vice versa.
	\item  The determinate domain of semi-global solutions to the one-sided rightward mixed initial-boundary value problem of system  \eqref{e:right-intro}.
\end{enumerate}

Here the semi-global solution in the forward sense and that in the rightward sense are obtained as the limits of two kinds of approximations, respectively. The reason that we use different approaches to construct semi-global solutions will be explained in the sequel. 

In\cite{Li-Yu_OC}, we obtained the local exact one-sided boundary null controllability of entropy solutions to a class of quasilinear hyperbolic systems of conservation laws 
\begin{equation}\label{e:hcl}
\pt H(u)+\px F(u)=0,\qquad t\ge 0,\ 0<x<L
\end{equation}
which corresponds to $ G\equiv 0 $ in \eqref{e:intro}, by proving the previous three ingredients in the case without source terms. In fact, we proved the semi-global well-posedness of entropy solution as the limit of $ \e $-approximate front tracking solutions. Although the ingredient (2) is trivial in the case of classical solutions, it is not always true for entropy solutions which are irreversible with respect to the time in general. Moreover, the rightward system \eqref{e:right-intro} may not admit an entropy-entropy flux even if the forward system \eqref{e:intro} does possess an entropy-entropy flux, no matter there is a source term $ G $ or not. To avoid checking the entropy-admissible condition directly, we considered the solution as the limit of $ \e $-approximate front tracking solutions and proved that the $ \e $-approximate front tracking solution to system \eqref{e:hcl} in the forward sense is also an $ \e $-approximate front tracking solution to system \eqref{e:right-intro} with $ G\equiv 0 $ in the rightward sense under the assumption that all negative characteristics are linearly degenerate. Then by passing to the limit, we obtained the ingradient (2) for system \eqref{e:hcl}. This argument makes sense since a solution of system \eqref{e:hcl} as the limit of $ \e $-approximate solutions is actually an entropy solution when system \eqref{e:hcl} process an entropy-entropy flux.

In this paper, combining the semi-global existence of entropy solutions proved in \cite{Li-Yu_OC} and the splitting method, we will generalize the corresponding semi-global existence of entropy solution to the system of balance laws \eqref{e:intro} by considering the solution as the limit of $ \he $-solutions which are, in some extend, piecewise constant approximate solutions to system
%obtain the semi-global existence of entropy solutions as the limit of $ \he $-solutions which are, in some extend, piecewise constant approximations to system
\begin{equation*}\label{e:he-apx}
\pt H(u)+\px F(u)=\sum_{j\in\Z\cap [0,T/h]}h G(u)\delta_0(t-jh),
\end{equation*}
where $ \delta_0$ denotes the delta function. Concerning the ingredient (2), the argument used in the case of conservation laws does not work for balance laws, since an $ \he $-solution to system \eqref{e:intro} in the forward sense is no longer the $ \he $-solution to system \eqref{e:right-intro} in the rightward sense. So for the rightward system \eqref{e:right-intro}, we consider the solution as the limit of $ \eh $-solutions which are piecewise constant approximate solutions to system
\begin{equation*}\label{e:eh-apx}
\px F(u)+\pt H(u)=\sum_{j\in\Z\cap [0,T/h]}h G(u)\delta_0(t-jh) ,
\end{equation*}
and we prove that an $ \he $-solution to system \eqref{e:intro} in the forward sense is also the $ \eh $-solution to system \eqref{e:right-intro} in the rightward sense on domain $ \D_T $, provided that all negative characteristics are linearly degenerate, and vise versa. By passing to the limit, we obtain the desired ingredient (2). Moreover, we prove the ingredient (3) only for the solutions as the limit of $ \eh $-solutions to the rightward system \eqref{e:right-intro}. 

The paper is organized as follows. In Section \ref{s:pre}, we recall the construction of $ \e $-solutions to the mixed initial-boundary value problem of quasilinear hyperbolic systems of conservation laws \eqref{e:hcl}. 
Section \ref{s:inhom} concerns the well-posedness of entropy solution to the system of balance laws \eqref{e:intro} and it contains three parts. In Section \ref{ss:hes}, we describe the construction of $ \he $-solutions to the quasilinear hyperbolic system of balance laws \eqref{e:intro}. Then by passing to the limit, we obtain the existence of the semi-global entropy solution when system \eqref{e:intro} possesses an entropy-entropy flux. In Section \ref{ss:ehs} we introduce another kind of piecewise constant approximate solutions, called $ \eh $-solutions, and prove the semi-global existence and stability of solution as the limit of $ \eh $-solutions. In Section \ref{ss:rbhesehs}, we prove the equivalence between the solution as the limit of $ \he $-solutions to system \eqref{e:intro} in the forward sense and the solution as the limit of $ \eh $-solutions to system \eqref{e:right-intro} in the rightward sense.
In Section \ref{s:oneside}, using the theory of semi-global solutions obtained in Section \ref{s:inhom}, we prove the local exact one-sided boundary null controllability under the assumption that all negative characteristic are linearly degenerate. In Appendix, we collect all the technical proofs for lemmas and propositions stated in Section \ref{s:inhom}.  

Throughout this paper, in order to avoid abusively using constants, we denote by the notation $C$ a positive constant which depends only on system \eqref{e:intro}, constant $ L $ and functions $b_1,b_2$, but is independent of the special choice of initial data $ \iu $, boundary data $
\go,\gt $ and time $T$. Moreover, we denote by $ C(t) $ a positive constant which depends also on time $ t $.

\section{Preliminaries}
\label{s:pre}
\subsection{Basic notations}
%Let $ \ga =\norm{G}_{C^1(\brz)}$.

In this section, under Assumptions (H1)-(H5), we consider the semi-global solution to the following mixed initial-boundary value problem  of system
\begin{equation}\label{e:gs}
\pt H(u)+\px F(u)=G(u),\quad t> 0,\ 0<x<L
\end{equation}
with the initial-boundary conditions
\begin{equation}
\label{e:ibc}
\begin{cases}
t=0:\ u=\iu(x),& 0<x<L,\\
x=0:\ b_1(u)=\go(t),& t>0,\\
x=L:\ b_2(u)=\gt(t),& t>0.
\end{cases}
\end{equation}

We normalize the left and right eigenvectors $l_i(u)$ and $ r_i(u)$ $ (i=1,...,n) $ of $(DH)^{-1}DG(u)$, so that
\[
l_i(u) \cdot r_j(u) \equiv\delta_{ij},\quad i,j =1,...,n,
\]
where $ \delta_{ij} $ is the Kronecker symbol.
For two states $\omega,\omega'\in \rn n$, let
\begin{equation*}
\label{def:average-matrix-DH}
A(\omega,\omega')=\int^1_0 [DH](\theta \omega+(1-\theta)\omega') d\theta
\end{equation*}
and
\begin{equation*}
\label{def:average-matrix-DG}
B(\omega,\omega')=\int^1_0 [DG](\theta \omega+(1-\theta)\omega') d\theta.
\end{equation*}
We have
\[
H(\omega')-H(\omega)=A(\omega,\omega')(\omega'-\omega),\quad
G(\omega')-G(\omega)=B(\omega,\omega')(\omega'-\omega).
\]
By hyperbolicity, for two $ \omega $ and $ \omega' $ sufficiently close to the origin, we denote by $\lambda_i(\omega,\omega')$ the $i$-th real eigenvalue of the matrix $A^{-1}(\omega,\omega')B(\omega,\omega')$ (cf. \cite{Li-Yu_boundary-value}).%, with the right and left eigenvectors $r_i(\omega,\omega')$, $l_i(\omega,\omega')$ respectively.

For any given $u\in \brz$, let $\sigma\mapsto R_i(\sigma)[u]$ denote the $i$-rarefaction curve passing through $u$ for $ \sigma\in[-\sigma_0,\sigma_0] $ with $ \sigma_0 $ sufficiently small and let $\sigma\mapsto S_i(\sigma)[u]$ denote the $i$-shock curve passing through $u$ for $ \sigma\in[-\sigma_0,\sigma_0] $ with $ \sigma_0 $ sufficiently small (cf. \cite{Li-Yu_OC}).
If the $\ith$ characteristic is linearly degenerate, we choose $r_i(u)$ to have the unit length and we let the coinciding $i$-rarefaction curve and $i$-shock curve be parameterized by the arc-length. If the $\ith$ characteristic is genuinely nonlinear, we choose $r_i(u)$ such that $\nabla\lambda_i(u)\cdot r_i(u)\equiv 1$ and we let the $i$-rarefaction curve and the $i$-shock curve be parameterized in such a way that
\begin{eqnarray*}
%\label{relation:reparameter-rsc}
\lambda_i \big( R_i(\sigma)[u]\big) - \lambda_i(u)=\sigma\quad \text{and}\quad \lambda_i\big( S_i(\sigma)[u]\big)-\lambda_i(u)=\sigma,
\end{eqnarray*}
respectively. This parametrization leads to a useful property:
\begin{equation*}
\label{property-parameter}
u=S_i(-\sigma)S_i(\sigma)[u]\qquad \text{for all }\sigma\in [-\sigma_0,\sigma_0],\ u\in \brz,\ i\in \{1,...,n\}.
\end{equation*}
With this parametrization, a straightforward computation shows that the composite function
\begin{equation*}
\Psi_i(\sigma)[u]=
\begin{cases}
R_i(\sigma)[u] & \ift\ \sigma>0,\\
S_i(\sigma)[u] & \ift\ \sigma<0
\end{cases}
\end{equation*}
is smooth for $\sigma\ne 0$ and of class $C^2$ at $\sigma=0$, which is called the \emph{$\ith$ elementary wave curve}.

\subsection{$ \e $-solutions for homogeneous systems}
\label{ss:es}
In \cite{Li-Yu_OC}, under the assumptions (H1)-(H5), we obtained the well-posedness of semi-global entropy solutions to the mixed initial-boundary value problem of general quasilinear hyperbolic systems of conservation laws \eqref{e:hcl}
associated with the initial-boundary conditions \eqref{e:ibc}. This was done by the approximation of $ \e $-solutions (that is, $ \e $-approximate front tracking solutions therein). Recall the definition of $ \e $-solutions to system \eqref{e:hcl}.  

\begin{definition}
	\label{d:es-h}
	For any given time $ T>0 $ and any fixed $\e>0$, we say that a continuous map
	\[
	t\mapsto \ue(t,\cdot)\in \lnorm 1(0,L), \quad \forall t\in (0,T)
	\]
	is an \emph{$\e$-solution} to the quasilinear hyperbolic system of conservation laws \eqref{e:hcl} on the domain $ \D_T $, if 
	\begin{enumerate}[(1)]
		\item as a function of two variables, $\ue=\ue(t,x)$ is piecewise constant with discontinuities occurring along finitely many straight lines with non-zero slope in the domain $ \D_T$ and only finitely many interactions of fronts occur. Jumps can be of three types: shocks (or contact discontinuities), rarefaction fronts, non-physical fronts which are denoted by $\ms$, $ \mr $ and $\NP $, respectively. $ \P=\ms\cup \mr $ are called physical fronts.
		\item along each physical front $x=\xat\ (\alpha \in \P)$, the left and right limits $\ul:=\ue(t,\xat-)$ and $\ur:=\ue(t,\xat+)$ of $\ue(t,\cdot)$ on it are selected by
		\begin{equation*}
		\ur=\Psi_{k_{\alpha}}[\sigma_{\alpha}]\big(\ul\big),
		\end{equation*}
		where  $k_{\alpha}\in\{1,...,n\}$ denotes the corresponding family of the front, and $\sigma_{\alpha}$ is its \emph{wave amplitude}. Moreover, if the $k_{\alpha}$-th characteristic family is genuinely nonlinear with $\sa<0$ (i.e., $ \alpha\in \ms$) or linearly degenerate, then
		\begin{equation*}
		\label{d:as}
		|\dot x_\alpha(t)-\lambda_{k_{\alpha}}(\ul,\ur)(t)|\le \oo \e,\quad \forall t\in (0,T);
		\end{equation*}
		while, if the $k_{\alpha}$-th characteristic family is genuinely nonlinear with $0<\sa\le \e$ (i.e., $ \alpha\in \mr $), then the speed of the front satisfies
		\begin{equation*}
		\label{d:ar}
		|\dot x_\alpha(t)-\lambda_{k_{\alpha}}(\ur)(t)|\le \oo \e, \quad \forall t\in (0,T), 
		\end{equation*}
		where $ C $ stands for a positive constant.
		\item all non-physical fronts $x=\xat\ (\alpha\in \NP)$ have the constant speed $\dot x_{\alpha}\equiv \hat \lambda$ with $0<\hat \lambda<c$, where $ c $ is given by \eqref{h:nonzero-char}. Moreover, the total amplitude of all non-physical waves in $\ue(t,\cdot)$ is uniformly bounded with respective to $ t\in (0,T) $ by $\e$, i.e.,
		\[
		\sum_{\alpha\in\NP}|\ue(t,\xa+)-\ue(t,\xa-)|\le \e,\qquad \forall t\in (0,T).
		\]
	\end{enumerate}
\end{definition}

For any given small $ \e>0 $ and any given initial-boundary data $ (\iu,\go,\gt) $ with $ \lug $ sufficiently small,
where
\begin{equation}
\label{def:lug}
\begin{split}
\lug:=\tv{\iu(\cdot)}{0<x<L}+&|\iu(0+)|+\sum_{i=1,2}\tv{g_i(\cdot)}{0<t<T}\\
+&|b_1(\iu(0+))-\go(0+)|+|b_2(\iu(L-))-\gt(0+)|,
\end{split}
\end{equation}
we proved in \cite{Li-Yu_OC} the semi-global existence of $ \e $-solutions to the initial-boundary value problem of quasilinear hyperbolic system of conservation laws \eqref{e:hcl} and \eqref{e:ibc} on the domain $ \D_T $. In fact, we can construct an $\e$-solution by the following algorithm: for any given positive time $ T $, we choose the approximate initial-boundary data $ (\iue,\goe,\gte) $ with finitely many jumps, such that
\begin{subequations}\label{h:ib-app}
	\begin{align}
	&\tv{\iue(\cdot)}{0<x<L} \leq \tv{\iu(\cdot)}{0<x<L}, \quad \|\iue(\cdot)-\iu(\cdot)\|_{L^{\infty}(0,L)} \le \e,\label{h:i-app}\\
	&\tv{\gie(\cdot)}{0<t<T} \leq \tv{g_i(\cdot)}{0<t<T}, \quad \|\gie(\cdot)-g_i(\cdot)\|_{L^{\infty}(0,T)}\le \e/2,\qquad i=1,2.\label{h:b-app}
	\end{align}
\end{subequations}
Let $x_1<\dots<x_p$ be the jump points of $\iue$ and write $ x_0=0$ and
$x_{p+1}=L $.  In order to obtain a piecewise constant approximate solutions locally, for $\alpha=0,1,\dots,p+1$, we solve the Riemann initial value problem at each point $(0,\xa)\ (\alpha=1,...,p)$ and the left-sided (resp. right-sided) Riemann mixed problem at the point $(0,x_{0})$ (resp. $ (0,x_{p+1})$), by means of the \emph{approximate Riemann solver} which is defined by replacing rarefaction waves in the exact solution with rarefaction fans (see also \cite{Bressan2000}).
% initial data of jump $[\iue(x_\alpha-),\iue(x_\alpha+)]$ by means of piecewise constant self-similar functions.  $w_{\alpha}(t,x)=\phi_{\alpha}(\frac{x-x_{\alpha}}{t})$ and we solve the left-sided (resp. right-sided) Riemann mixed problem with the piecewise constant initial and boundary data $[\iue(0+),\goe(0+)]$ (resp. $[\iue(L-),\gte(0+)]$) by means of a functional $w_0(t,x)=\phi_{0}(\frac{x}{t})$ (resp. $w_{p+1}(t,x)=\phi_{p+1}(\frac{x-L}{t})$), where $\phi_i\ (i=0,1,...,p+1)$ denotes a piecewise constant function to be determined below.
The straight lines on which the discontinuities located are called \emph{fronts}. A front travels with constant speed until it meets another front or hits the boundary at the so-called \emph{inner/boundary interaction point}. Then the corresponding new Riemann problem will be approximately solved by a piecewise constant self-similar solution in an analogous manner. This procedure can be continued up to $t = T$ if the choice of approximate Riemann solver ensures the smallness of total variation of the approximate solution and only produces finitely many interactions. For this aim, besides the approximate Riemann solver, two additional Riemann solvers, \emph{simplified approximate Riemann solver} and \emph{crude Riemann solver}, are applied according to different situations. More precisely, both of them do not increase the number of physical fronts, and we apply the simplified approximate Riemann solver when the inner (resp. boundary) interaction involves only physical fronts, and the product of the strength of two incoming fronts (resp. the strength of the front hitting the boundary) is small, while, we apply the crude Riemann solver if the inner/boundary interaction involves also a non-physical front. Here, by slightly modifying the speed of fronts, we may assume that at any time before the time $T$, at most one of the following situations happens: (a) two fronts interact with each other inside the domain $\D_{T}$; (b) one front hit the boundary; (c) at least one of the boundary data $g^{\e}_{i}\ (i=1,2)$ has a jump. In particular, two non-physical fronts can not interact since they travel with the same constant speed. 
%when two physical fronts interact and the interaction amout is small enough, we apply the simplified approximate Riemann solver withou increase the number of physical fronts, but one non-physical front with fixed speed is generated. When one phyiscal front interact with a non-physica fonts
%when two physical fronts interact at the inner point or one phyical
%front interact at the boundary point, the simplified approximate Riemann solver will generate two fronts of the same characteristic families as the incoming fronts, 

For notational convenience, in what follows, for a front (or wave) $\alpha$, the symbol $\sigma_{\alpha}$ will denote its amplitude, furthermore, we denote by $\mathcal{F}(\alpha)$ the characteristic family of the front (or wave) $\alpha$ and we say that a front $ \alpha $ is an $ i $-front if $ \mathcal{F}(\alpha)=i $.

Moreover, at the inner/boundary interact point, the $\e$-solution constructed above satisfies the following properties %in which $ C $ is a positive constant %in which $ C $ is a positive constant
(see \cite{Colombo_general-balance-boundary} or \cite{Li-Yu_OC}):
\begin{enumerate}[(1)]
	\item\label{item:1} Suppose that an $ i $-fronts $\alpha$ and a $j$-front $ \beta $ interact with each other at an inner point in $\D_T$. Let $\alpha_1',...,\alpha_n'$ be the outgoing waves with $\mathcal{F}(\alpha_{\ell}')=\ell\ (\ell=1,...,n)$, generated by the approximate Riemann solver. Then we have 
	\begin{equation}
	\label{eq:2-1}
	| \sigma_{\alpha'_i}-\sigma_{\alpha} |+| \sigma_{\alpha'_j}-\sigma_{\beta} |+\sum\limits_{k\ne i,j}| \sigma_{\alpha'_k}|\le C | \sigma_{\alpha}\sigma_{\beta} |,\quad \text{if } i\ne j
	\end{equation}
	and
	\begin{equation}
	\label{eq:2-2}
	| \sigma_{\alpha'_i}-\sigma_{\alpha}-\sigma_{\beta}| +\sum\limits_{k\ne i}| \sigma_{\alpha'_k}|\le C | \sigma_{\alpha}\sigma_{\beta} |,\quad \text{if } i= j.
	\end{equation}
	\item\label{item:2} A front $\alpha$ (resp. $\beta$)  hits the right boundary $x=L$ (resp. the left boundary $x=0$) with $\mathcal{F}(\alpha)\ge m+1$ (resp. $\mathcal{F}(\beta)\le m$), where $ m $ is given by (H2). Let $\alpha_1',...,\alpha_m'$ (resp. $\alpha_{m+1}',...,\alpha_n'$) be the outgoing waves with $\mathcal{F}(\alpha_k')=k \ (k=1,...,m)$ (resp. $k=m+1,...,n$), generated by the approximate Riemann solver. Then we have
	\begin{equation}
	\label{eq:4}
	\sum\limits_{k=1}^m|\sigma_{\alpha'_k}|\le C |\sigma_{\alpha}| \qquad \text{(resp. $\sum\limits_{k=m+1}^n |\sigma_{\alpha'_{k}}|\le C |\sigma_{\beta}|$)}.
	\end{equation}
	\item\label{item:3} Suppose that a physical front $ \alpha $ interacts from left with a physical front $ \beta $. If the simplified approximate Riemann solver is used at the interaction point and generates a non-physical front $ \gamma $ and two corresponding outgoing physical waves ${\alpha'}$ and ${\beta'} $ when $ \mathcal{F}(\alpha)\ne \mathcal{F}(\beta) $ or a single physical wave ${\alpha'} $ when $ \mathcal{F}(\alpha)= \mathcal{F}(\beta) $, then we have
	\begin{equation*}
	|\sigma_{\alpha'} -\sa|+|\sigma_{\beta'}-\sb|+|\sigma_{\gamma}|\leq \oo |\sa\sb|\quad \text{if}\quad \mathcal{F}(\alpha)\ne \mathcal{F}(\beta)
	\end{equation*}
	and
	\begin{equation*}
	|\sigma_{\alpha'} -\sa-\sb|+|\sigma_{\gamma}|\leq \oo |\sa\sb|\quad \text{if}\quad \mathcal{F}(\alpha)= \mathcal{F}(\beta).
	\end{equation*}
	\item\label{item:4} Suppose that a non-physical front $ \gamma $ interacts with a physical front $ \alpha $. If $ \gamma' $ is the outgoing non-physical front defined by the crude Riemann solver, then we have
	\begin{equation*}
	\label{eq:6}
	|\sigma_{\gamma'}-\sigma_{\gamma}|\leq \oo |\sa\sigma_{\gamma}|.
	\end{equation*}
	\item\label{item:8} Suppose that at time $t=\tau$, $g^{\e}_1(\tau+)\ne g^{\e}_1(\tau-)$ (resp. $g^{\e}_2(\tau+)\ne g^{\e}_2(\tau-)$) and $\alpha'_{m+1},...,\alpha'_n$ (resp. $\alpha'_1,...,\alpha'_m$) are the outgoing fronts generated by the approximate Riemann solver. Then we have
	\begin{equation}
	\label{eq:1}
	\begin{split}
	&\qquad \sum\limits_{i=m+1}^n|\alpha'_i|\le C|g_1^{\e}(\tau+)-g_{1}^{\e}(\tau-)|\\
	&\left(\text{resp.}\quad \ \sum\limits_{i=1}^m|\alpha'_i|\le C|g_2^{\e}(\tau+)-g_2^{\e}(\tau-)|\right).
	\end{split}
	\end{equation}
	\item\label{item:6} Suppose that a physical front hits the boundary and the simplified approximate Riemann solver is applied, or a non-physical front hits the right boundary $x=L$ and the crude Riemann solver is applied, then no outgoing front will be generated.
\end{enumerate}
Finally, we can construct an $ \e $-solution $ \ue=\ue(t,x) $ to problem \eqref{e:hcl} and \eqref{e:ibc}, such that
\begin{gather*}
\ue(0,x)=\iue(x),\quad \text{for a.e. }x\in (0,L),\nonumber\\
\|b_1(\ue(\cdot,0+))-\goe(\cdot)\|_{L^{\infty}(0,T)}\le \e/2,\quad \|b_2(\ue(\cdot,L-))-\gte(\cdot)\|_{L^{\infty}(0,T)}\le \e/2\nonumber
\end{gather*}   
with some additional stability properties. 

The following lemma, concerning the equivalence between the $\e$-solution to system \eqref{e:hcl} in the forward sense and that to system
\begin{equation}
\label{e:hclr}
\px F(u)+ \pt H(u)=0
\end{equation}
in the rightward sense under the assumption that all the negative characteristic are linearly degenerate. This result will be useful in what follows for describing the relation between $ \he $-solutions and $ \eh $-solutions.
\begin{lemma}
	\label{l:uecv}
	Under assumptions (H1)-(H5) and the assumption that all negative characteristics are linearly degenerate, suppose that $\ue=\ue(t,x)$ is an $\e$-solution to system \eqref{e:hcl} in the forward sense on the domain $\D_T$. Then, if we exchange the role of $ t $ and $ x $, namely, regard $ x $ as the ``time" variable and $ t $ as the ``space" variable, $ \ue $ is also an $\e$-solution to system \eqref{e:hcl} in the rightward sense, i.e., an $ \e $-solution to system \eqref{e:hclr} on the domain $ \D_T $.
\end{lemma}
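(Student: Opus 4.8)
The plan is to verify directly that $\ue$, read with $x$ as the ``time'' variable and $t$ as the ``space'' variable, satisfies each of the three defining properties of an $\e$-solution in Definition \ref{d:es-h}, now for the rightward system \eqref{e:hclr}. The starting observation is that \eqref{e:hclr} shares with \eqref{e:hcl} the same field of eigenvectors: if $(DH(u))^{-1}DF(u)\,r_i(u)=\la_i(u)r_i(u)$, then $(DF(u))^{-1}DH(u)\,r_i(u)=\la_i(u)^{-1}r_i(u)$, so the rightward characteristic speeds are the reciprocals $\mu_i:=1/\la_i$ with the \emph{same} eigenvectors $r_i$. Consequently the $i$-shock curves (determined by $[F]=s[H]\Leftrightarrow[H]=s^{-1}[F]$) and the $i$-rarefaction curves (integral curves of $r_i$) coincide for the two systems, the elementary wave curves $\Psi_i$ are the same geometric objects, and the genuinely nonlinear / linearly degenerate character is preserved, since $D\mu_i\cdot r_i=-\la_i^{-2}\,D\la_i\cdot r_i$. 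By \eqref{h:nonzero-char} every characteristic speed satisfies $|\la_i|>c$ and is bounded on $\brz$, while non-physical fronts have speed in $(0,c)$; hence every front of $\ue$ has speed bounded away from $0$ and $\infty$, so after exchanging $t$ and $x$ it is again a straight line of finite non-zero slope $1/\dxa$, and the (purely geometric) set of interaction points and their finiteness are unchanged. This gives property (1).

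For property (2), fix a physical front $x=\xat$ of family $\ka$ and amplitude $\sa$, so $\ur=\Psi_{\ka}[\sa](\ul)$. Since the wave curves coincide, the same relation exhibits $\ul,\ur$ as joined by the rightward $k'$-wave curve, where $k'$ is the fixed relabeling obtained by sorting the reciprocals $\mu_i$. The decisive point is orientation: a short computation shows that if $\dxa=s>0$ the labels ``left/right'' are interchanged by the swap (the rightward-left state is $\ur$, the rightward-right state is $\ul$), whereas if $s<0$ they are preserved. By hypothesis all negative families $\ka\le m$ are linearly degenerate; such fronts have $s\approx\la_{\ka}<0$, so orientation is preserved, the front remains a contact discontinuity (linear degeneracy persists and $\la_{\ka}$, hence $\mu_{k'}$, is constant along it, with amplitude the orientation-symmetric arc length), and the speed bound transfers because the rightward averaged matrix is the inverse of the forward one, so $\mu_{k'}(\ul,\ur)=1/\la_{\ka}(\ul,\ur)$ and $|1/s-1/\la_{\ka}(\ul,\ur)|\le|s-\la_{\ka}(\ul,\ur)|/|s\,\la_{\ka}|\le \oo\e$. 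For a positive family $\ka\ge m+1$ (possibly genuinely nonlinear) one has $s\approx\la_{\ka}>0$ and orientation is reversed; using $\la_{\ka}(\ur)-\la_{\ka}(\ul)=\sa$ together with the fact that passing to reciprocals \emph{and} reversing orientation preserves the monotonicity type of $\la_{\ka}$ across the front, a forward shock ($\sa<0$) maps to a rightward shock and a forward rarefaction ($0<\sa\le\e$) to a rightward rarefaction. The shock speed bound again transfers by the reciprocal estimate; for the rarefaction, whose rightward condition is imposed at the rightward-right state $\ul$, one uses in addition $|1/\la_{\ka}(\ur)-1/\la_{\ka}(\ul)|\le \oo|\sa|\le \oo\e$, which is precisely where the smallness $\sa\le\e$ of a rarefaction front enters, and the rescaled amplitude satisfies $0<\sigma'\le \oo\e$.

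The heart of the argument, and the step I expect to be the main obstacle, is exactly this orientation analysis combined with the role of the hypothesis that all negative characteristics are linearly degenerate. The danger is a front whose shock/rarefaction type is \emph{not} preserved under the swap: if some negative family $\ka\le m$ were genuinely nonlinear, a forward $\ka$-shock (with $\la_{\ka}(\ur)<\la_{\ka}(\ul)<0$) would, after the orientation-preserving swap, satisfy $\mu_{k'}(\text{rightward-left})<\mu_{k'}(\text{rightward-right})$ and so be forced to be read as a rightward rarefaction of possibly large amplitude, violating the $\le\e$ constraint on rarefaction fronts. Linear degeneracy of the negative families removes this obstruction entirely, since along a contact $\la_{\ka}$ is constant and no shock/rarefaction dichotomy arises; this is why the hypothesis is imposed. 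One must run this case distinction uniformly over all of $\D_T$ so that the classification into $\ms$, $\mr$, $\NP$ stays consistent.

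Finally, for property (3), each non-physical front retains a constant speed, now equal to $1/\hat\la$, and the family of non-physical fronts together with their strengths is unchanged by relabeling the axes; in particular the uniform estimate $\sum_{\alpha\in\NP}|\ue(t,\xa+)-\ue(t,\xa-)|\le\e$ translates, using that there are finitely many such fronts of globally small total strength, into the corresponding bound along each ``time'' slice $x=\text{const}$. Here the admissible speed range for non-physical fronts in the notion of an $\e$-solution to \eqref{e:hclr} is understood as the one compatible with the reciprocal speeds, the rightward characteristics satisfying $|\mu_i|<1/c$ so that the reassigned value $1/\hat\la>1/c$ is an admissible artificial speed; the essential content, constant speed and small total strength, is preserved. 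Collecting properties (1)--(3) shows that $\ue$ is an $\e$-solution to \eqref{e:hclr} in the rightward sense on $\D_T$, which is the assertion of Lemma \ref{l:uecv}.
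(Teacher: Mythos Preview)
The paper does not actually prove this lemma: it is stated in Section~\ref{s:pre} as a result imported from the earlier work \cite{Li-Yu_OC}, and is restated verbatim as Lemma~\ref{l:ebes} with the same attribution, so there is no in-paper proof to compare against. Your direct verification of the three items of Definition~\ref{d:es-h} is the natural route and captures the essential mechanism correctly: the eigenvectors of $(DF)^{-1}DH$ coincide with those of $(DH)^{-1}DF$, the speeds become reciprocals, the shock and rarefaction curves are the same geometric objects, and the orientation analysis (preserved when $\dot x_\alpha<0$, reversed when $\dot x_\alpha>0$) pinpoints exactly why linear degeneracy of the negative families is needed---without it a forward shock of a negative family would be read as a rightward rarefaction of possibly large amplitude.

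Two places deserve more care before the argument is complete. First, the non-physical-front convention does not transfer literally: Definition~\ref{d:es-h} places $\hat\la\in(0,c)$ strictly \emph{below} all characteristic speeds in absolute value, whereas after the swap the rightward speed $1/\hat\la>1/c$ lies strictly \emph{above} all rightward characteristic speeds $|\mu_i|<1/c$. This is harmless (it is in fact the standard Bressan convention), but the rightward notion of $\e$-solution must be stated with this adapted speed range; you allude to this but should make it explicit. Second, the bound $\sum_{\alpha\in\NP}|\sa|\le\e$ is imposed on $t$-slices and does not pass to $x$-slices by mere relabeling: different non-physical fronts cross a fixed $x$ at different times $t_\alpha(x)$, and their strengths may have changed at intervening interactions with physical fronts, so the $x$-slice sum is a sum of strengths taken at \emph{different} times. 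A short additional argument---using that non-physical fronts are parallel (same speed $\hat\la$), are created only at interactions, and that the construction in \cite{Li-Yu_OC} in fact controls the total non-physical strength ever produced via the threshold $\rho$---closes this gap. With these two points made precise your proof stands.
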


\section{Semi-global solutions to inhomogeneous systems}
\label{s:inhom}
\subsection{$ \he $-solutions}
\label{ss:hes}

\begin{definition}\label{d:he-es}
	For any given time $ T>0 $ and any fixed $\e>0$, we say that a continuous map
	\[
	t\mapsto \uhe(t,\cdot)\in \lnorm 1(0,L), \quad \forall t\in (0,T)
	\]
	is an \emph{$\he$-approximate solution} to the quasilinear   hyperbolic system of balance laws \eqref{e:gs} on the domain $ \D_T $ if 
	\begin{itemize}
		\item As a function of two variables, $\uhe=\uhe(t,x)$ is piecewise constant with discontinuities occurring along finitely many straight lines which are called \emph{fronts}.
		\item Restricted on the domain $ Z_j:=\{jh<t<(j+1)h,\ 0<x<L\}\cap \D_T,\ j\in \Z\cap (0,T/h)$, $ \uhe $ is an $ \e $-solution to the quasilinear hyperbolic system of conservation laws \eqref{e:hcl}.
		\item  Along the segment $ \mmz_j =\{t=jh\}\times\{0<x<L\}$, the values $ \ul=\uhe(jh-,x) $ and $ \ur=\uhe(jh+,x) $ satisfy $ \ur=H^{-1}[H(\ul)+hG(\ul)] $ for all $ x\in (0,L) $ except at the points where fronts interact with the segment $ \mmz_j $.
	\end{itemize}	
	Moreover, if the initial and boundary values of $\uhe$ satisfy approximately the initial-boundary conditions \eqref{e:ibc}, that is, if
	\begin{equation*}
	\label{d:uhe-ai}
	\|\uhe(0,\cdot)-\bar u(\cdot)\|_{\lnorm 1(0,L)}\leq \e,
	\end{equation*}
	\begin{equation*}
	\label{d:uhe-ab}
	\|b_1\big(\uhe(\cdot,0+)\big)-g_1(\cdot)\|_{\lnorm{1}(0,T)}\leq \e,\quad \|b_2\big(\uhe(\cdot,L-)\big)-g_2(\cdot)\|_{\lnorm{1}(0,T)}\leq \e,
	\end{equation*}
	then $\uhe=\uhe(t,x)$ is called the $\he$-solution to the mixed initial-boundary value problem \eqref{e:gs}-\eqref{e:ibc}. 
\end{definition}

\subsubsection{Construction of $ \he$-solutions}
\label{sss:ches}
Based on the existence of $ \e $-solutions for the mixed initial-boundary value problem \eqref{e:hcl} and \eqref{e:ibc} and the standard splitting algorithm, we can construct $ \he$-approximate solutions to problem \eqref{e:gs}-\eqref{e:ibc} as follows.

We construct an $ \he $-solution $ u=\uhe(t,x) $ to the balance laws \eqref{e:gs} with approximate initial-boundary data $ (\bu^{\e},g^{\e}_1,g^{\e}_2) $ by induction: we first  choose a piecewise constant vector function  $ (\bu^{\e},g^{\e}_1,g^{\e}_2) $ which is a good approximation to the given initial-boundary data $ (\iu,g_1,g_2) $ such that \eqref{h:ib-app} holds. Then we define $ \uhe(t,x) $ for $ t\in [0,h) $ as an $ \e $-solution of the quasilinear hyperbolic system of conservation laws \eqref{e:hcl} with the initial-boundary data  $ (\bu^{\e},g^{\e}_1,g^{\e}_2) $. Then, at the time $ t=h $ we define 
\[ \uhe(h+,x):=H^{-1}[H(\uhe(h-,x))+hG(\uhe(h-,x))]. \] Now\ suppose $ \uhe(t,x) $ is defined for $ t\in [kh,(k+1)h) $ as an $ \e $-solution to the system of conservation laws \eqref{e:hcl} with approximate initial-boundary data $ (\uhe(kh-,\cdot),\geo,\get)) $, then at the time $ t= (k+1)h $, $ \uhe $ is defined by  \[\uhe((k+1)h,x)=H^{-1}[H(\uhe((k+1)h-,x))+hG(\uhe((k+1)h-,x))]. \]

This procedure can be continued up to the time $ t=T $, if the number of interaction is finite and the total amplitude of wave-strength remains sufficiently small. This can be proved by the argument used in \cite{Li-Yu_OC}. Roughly speaking, we first estimate the total variation of $ \he $-solutions in a small time interval $ [0,\hatt/2] $, where
\begin{equation*}
\label{d:hattauone}
\hat{\tau}=\frac{L}{2}\min_{u\in \brz}\{|\lambda_1(u)|^{-1},\lambda_n(u)^{-1}\}.
\end{equation*}
This can be done by estimating the total amplitude of fronts on two trapezoid domains
\begin{equation}\label{d:lt}
\mfl:=\left\{(t,x)\ |\ 0<t<\hat \tau,0<x< L(\hat \tau-t)/{(2\hatt)}\right\}
\end{equation}
and
\begin{equation}\label{d:rt}
\mfr:=\left\{(t,x)\ |\ 0<t<\hat\tau,\ Lt/(2\hatt)<x<L \right\},
\end{equation}
which involves only one-sided boundary data, respectively. Noticing that $ \D_{\hatt}\subseteq \mfl\cup\mfr $, we obtain the smallness of total amplitude of fronts in $ \D_{\hatt} $ if we do have the corresponding results on $ \mfl $ and $ \mfr $. Then we can estimate the total variation of $ \uhe $ by induction. The details of proof can be found in Appendix. In fact, we have the following
\begin{proposition}
	\label{p:uhe-ex}
	For any fixed $T>0$, there exist positive constants $\delta$ and $ C(T) $ such that for every initial-boundary data $ (\iu,\go,\gt) $ with
	\begin{equation*}
	\lug \le \delta
	\end{equation*}
	(cf. \eqref{def:lug}) and for any given $\e>0$ and $ h>0 $ sufficiently small, there exists an $\he$-solution  $\uhe=\uhe(t,x)$ to the mixed initial-boundary value problem \eqref{e:gs}-\eqref{e:ibc} on the domain $ \D_T $, satisfying
	\begin{subequations}
		\begin{align}
		&\tv{\uhe(t,\cdot)}{0<x<L} \le C(t) ( \lug+\gamma t), \quad \forall t\in (0,T),\label{bd:uhe-tvx}\\
		&\tv{\uhe(\cdot,x)}{0 <t<T}  \le C(T) (\lug+\gamma T), \quad \forall  x\in (0,L),\label{bd:uhe-tvt}\\
		&\|\uhe(t,\cdot)-\uhe(s,\cdot)\|_{\lnorm 1(0,L)} \leq C(T)  |t-s|+Ch,\quad  \forall t,s \in(0,T), \label{bd:uhe-L1conti-t}\\
		& \|\uhe(\cdot,x)-\uhe(\cdot,x')\|_{\lnorm 1(0,T)}  \leq C(T) |x-x'|, \quad \forall x,x' \in (0,L) \label{bd:uhe-L1conti-x}
		\end{align}
	\end{subequations}
	and $ \uhe \in \brz$ for a.e. $ (t,x)\in \D_T $.
\end{proposition}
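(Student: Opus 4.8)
The plan is to prove the proposition by induction in time, using the splitting algorithm already described as the construction itself: on each slab $kh\le t<(k+1)h$ one runs the front-tracking $\e$-solver of \cite{Li-Yu_OC} for the homogeneous system \eqref{e:hcl}, and across each line $t=kh$ one applies the pointwise map $\phu{\cdot}=H^{-1}[H(\cdot)+hG(\cdot)]$. The two things that must be established are (i) that the algorithm terminates with finitely many fronts and interactions up to $t=T$, and (ii) that the four a priori bounds hold uniformly in $\e$ and $h$. Following \cite{Li-Yu_OC}, I would first localize the short-time estimate to the two trapezoids $\mfl$ and $\mfr$ of \eqref{d:lt}--\eqref{d:rt}, each of which sees only one boundary, so that the one-sided $\e$-solution estimates apply directly; since $\D_{\hatt}\subseteq\mfl\cup\mfr$ this yields the bound on $\D_{\hatt}$, which is then propagated by treating $\uhe(kh-,\cdot)$ as fresh initial data.

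The heart of the matter is \eqref{bd:uhe-tvx}. I would carry over from \cite{Li-Yu_OC} the Glimm-type functional $\Upsilon(t)=V(t)+C_0Q(t)$ (total wave strength plus interaction potential, with suitable boundary weights) and use that, on each slab where \eqref{e:hcl} governs the evolution, the interaction and boundary-reflection estimates \eqref{eq:2-1}--\eqref{eq:1} make $\Upsilon$ non-increasing in $t$. The new ingredient is the effect of the splitting map $\ph$. Since $\ph=\mathrm{id}$ when $G\equiv0$, one has $\|D\ph-I\|_{L^\infty(\brz)}\le C\gamma h$, so across $t=kh$ every interior jump is amplified by at most the factor $1+C\gamma h$; moreover $\ph$ perturbs the boundary traces by $O(\gamma h)$, so that re-solving the boundary Riemann problems at the start of the next slab injects incoming waves of total strength $O(\gamma h)$ via \eqref{eq:4} and \eqref{eq:1}. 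Combining these gives the one-step recursion $\Upsilon(kh+)\le(1+C\gamma h)\,\Upsilon(kh-)+C\gamma h$, and a discrete Gronwall inequality over the $t/h$ slabs yields $\Upsilon(t)\le e^{C\gamma t}\bigl(\Upsilon(0)+C\gamma t\bigr)$. Since $\Upsilon$ dominates $\tv{\uhe(t,\cdot)}{0<x<L}$ and $\Upsilon(0)\le C\lug$, this is precisely \eqref{bd:uhe-tvx}; the same recursion, kept uniformly small, keeps the total strength within the regime in which the interaction estimates are valid and only finitely many fronts are produced, which settles termination.

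The remaining bounds follow from \eqref{bd:uhe-tvx} together with the fact that, by (H2), all fronts travel with speeds bounded both above and away from zero. For the Lipschitz continuity in time \eqref{bd:uhe-L1conti-t}, inside one slab each front moves at finite speed so $\|\uhe(t,\cdot)-\uhe(s,\cdot)\|_{\lnorm1(0,L)}\le C\sup_\tau\tv{\uhe(\tau,\cdot)}{0<x<L}\,|t-s|$, while every splitting line $t=kh$ crossed between $s$ and $t$ contributes a jump $\|\phu{\uhe}-\uhe\|_{\lnorm1(0,L)}\le C\gamma hL$; as at most $|t-s|/h+1$ such lines are crossed, the jumps sum to $C(T)|t-s|+Ch$. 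For \eqref{bd:uhe-tvt} I would bound, at fixed $x$, the total strength of all fronts crossing the vertical line together with the splitting jumps $\sum_k|\phu{\uhe(kh-,x)}-\uhe(kh-,x)|\le C\gamma T$, each group being controlled through \eqref{bd:uhe-tvx} and the interaction estimates. Then \eqref{bd:uhe-L1conti-x} is deduced from \eqref{bd:uhe-tvt}: since the lower speed bound forces every front to spend time at most $C|x-x'|$ inside the vertical strip $(x,x')$, integrating the time-variation bound across $x$ converts the wave strength into the factor $|x-x'|$. Finally $\uhe\in\brz$ a.e. follows from $\|\uhe(t,\cdot)\|_{L^\infty(0,L)}\le|\uhe(t,0+)|+\tv{\uhe(t,\cdot)}{0<x<L}$ once $\delta$ and $\gamma$ are taken small on $[0,T]$.

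The step I expect to be the main obstacle is the uniform control of $\Upsilon$ across the splitting lines in the presence of the boundaries. One must verify that the boundary Riemann problems created by the $O(\gamma h)$ mismatch after applying $\ph$ do not generate uncontrolled reflected waves or spurious non-physical fronts, and that the correct bookkeeping is an additive $C\gamma h$ per step rather than a multiplicative blow-up, so that the Gronwall sum closes to the clean form $C(t)(\lug+\gamma t)$. Equally delicate is keeping the bound below the threshold of validity of the interaction estimates \eqref{eq:2-1}--\eqref{eq:1} uniformly in $h$ and $\e$, which is exactly what allows the induction in time, organized through the trapezoids $\mfl$ and $\mfr$, to reach the prescribed $T$.
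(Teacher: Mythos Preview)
Your proposal is correct and follows essentially the same route as the paper: the trapezoid decomposition $\mfl\cup\mfr$, the Glimm functional $\Upsilon=V+C_2Q$ that is non-increasing on each homogeneous slab by the interaction/boundary estimates \eqref{eq:2-1}--\eqref{eq:1}, a growth bound across the splitting lines $t=jh$, a discrete Gronwall, and then induction over intervals of length $\hat\tau$. The derivations of \eqref{bd:uhe-tvt}--\eqref{bd:uhe-L1conti-x} you sketch also match the paper's.

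The one point where you diverge slightly is the form of the recursion across $t=jh$. You write $\Upsilon(kh+)\le(1+C\gamma h)\Upsilon(kh-)+C\gamma h$, with the additive $C\gamma h$ coming from the boundary mismatch after applying $\ph$. The paper instead packages everything into a purely multiplicative bound $\Upsilon^L(\uhe(jh+,\cdot))\le(1+Ch)\Upsilon^L(\uhe(jh-,\cdot))$ (their Lemma~\ref{l:uhe-gamma2}, proved as in \cite{Amadori-unique_bl}), and hence obtains $\Upsilon^L(t)\le e^{Ct}\Upsilon^L(0+)$ with no $\gamma t$ term in the spatial total-variation bound; the $\gamma t$ in the statement \eqref{bd:uhe-tvx} is slack. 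Your additive term is harmless (your Gronwall still closes to $C(t)(\lug+\gamma t)$), and your anticipated ``main obstacle'' is exactly the content of that lemma, so the difference is cosmetic rather than substantive.
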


\subsubsection{Solution as the limit of $\he$-solutions}
\label{sss:slhes}

Fix a sequence of pair $ (h^{\mu},\enu) $ with $ \enu\to 0 $ as $ \nu\to +\infty $ and $ h^{\mu}\to 0 $ as $ \mu\to +\infty $. For each $ \mu\ge 1 $ and $\nu\ge 1$, Proposition \ref{p:uhe-ex} yields an $(h^{\mu},\enu)$-solution $u^{\mu,\nu}$ to the mixed initial-boundary value problem \eqref{e:gs}-\eqref{e:ibc}, such that for all $ \mu,\nu\ge 1 $ the maps $t \mapsto u^{\mu,\nu}(t,\cdot)$ are uniformly Lipschitz continuous in $ \lnorm 1 $ norm with respect to $ t $, and $ \displaystyle{\tv{u^{\mu,\nu}(t,\cdot)}{0<x<L} }$ remains sufficiently small uniformly for all $ t\in (0,T) $. By Helly's Theorem \cite[Theorem 2.3]{Bressan2000}, for each fixed $ \mu\ge 1 $, we can extract a subsequence of $ \{u^{\mu,\nu}\} $, which converges to a limit function $u^{\mu}=u^{\mu}(t,x)$ in $\lnorm 1((0,T)\times(0,L))$ as $ \nu\to +\infty $ and the estimates \eqref{bd:uhe-tvx} and \eqref{bd:uhe-L1conti-t} still hold for the limit function $u^{\mu}$. Then using Helly's Theorem again, we can extract a convergent subsequence of $ \{u^{\mu}\} $. By a standard argument (see \cite{Amadori-unique_bl}), it can be proved that the limit function $u=u(t,x)$ is in fact an entropy solution to problem \eqref{e:gs}-\eqref{e:ibc}.

\begin{theorem}
	\label{t:luhe-ex}
	For any fixed $T>0$, there exist positive constants $\delta$ and $ C(T) $ such that for every initial-boundary data $(\iu,g_1,g_2)$ with
	\begin{equation*}
	\Lambda(\iu,g_1,g_2) \le \delta,
	\end{equation*}
	problem \eqref{e:gs}-\eqref{e:ibc} associated with $ (\iu,g_1,g_2) $ admits a solution $ u=u(t,x) $ on the domain $ \D_T$ as the limit of $\he$-solutions provided by Proposition \ref{p:uhe-ex}, satisfying
	%\begin{subequations}
	\begin{align*}
	& \tv{u(t,\cdot)}{0<x<L} \le C(t)  \lug, \quad \forall t\in (0,T),%\label{be:luhe-tvx}
	\\
	& \tv{u(\cdot,x)}{0 <t<T} \le C(T) \lug+\gamma T, \quad \forall  x\in (0,L),%\label{be:luhe-tvt}
	\\
	& \|u(t,\cdot)-u(s,\cdot)\|_{\lnorm 1(0,L)}\leq C(T)  |t-s|,\quad \forall t,s \in (0,T), %\label{be:luhe-L1conti-t}
	\\
	& \|u(\cdot,x)-u(\cdot,x')\|_{\lnorm 1(0,T)}\leq C(T) |x-x'|,\quad \forall x,x' \in (0,L), %\label{be:luhe-L1conti-x}
	\end{align*}
	%\end{subequation}
	and $u(t,x)\in \brz$ for a.e. $(t,x)\in \D_T  $.
\end{theorem}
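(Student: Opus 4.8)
The plan is to realize $u$ as an iterated limit of the $\he$-solutions supplied by Proposition~\ref{p:uhe-ex}: first send $\enu\to 0$ with the time step $h^\mu$ held fixed, then send $h^\mu\to 0$. The uniform estimates \eqref{bd:uhe-tvx}--\eqref{bd:uhe-L1conti-x} provide exactly the compactness needed to pass to the limit twice via Helly's theorem, while the distributional and entropy formulations of Definition~\ref{d:es} are recovered from the slab-and-interface structure of the $\he$-solutions in the limit.

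For the compactness, fix the double sequence $(h^\mu,\enu)$ as in the paragraph preceding the theorem and let $u^{\mu,\nu}$ be the corresponding $\he$-solutions. Since the constants $\delta$ and $C(T)$ of Proposition~\ref{p:uhe-ex} are independent of $(\mu,\nu)$, the bound \eqref{bd:uhe-tvx} controls $\tv{u^{\mu,\nu}(t,\cdot)}{0<x<L}$ and $\|u^{\mu,\nu}(t,\cdot)\|_{\lnorm{\infty}}$ uniformly in $t$ and in $(\mu,\nu)$, while \eqref{bd:uhe-L1conti-t} gives an $\lnorm{1}$-in-$x$ modulus of continuity in $t$ that is Lipschitz up to the fixed error $Ch^\mu$. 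For each fixed $\mu$ the maps $t\mapsto u^{\mu,\nu}(t,\cdot)$ are therefore uniformly $\lnorm{1}$-equicontinuous in $\nu$, so Helly's theorem \cite[Theorem~2.3]{Bressan2000} together with a diagonal argument over a countable dense set of times yields a subsequence $u^{\mu,\nu}\to u^\mu$ in $\lnorm{1}((0,T)\times(0,L))$ as $\nu\to\infty$. By lower semicontinuity of the total variation and by $\lnorm{1}$-convergence, the estimates \eqref{bd:uhe-tvx}--\eqref{bd:uhe-L1conti-x} survive for $u^\mu$, the residual $Ch^\mu$ remaining in the time-continuity estimate. A second application of Helly's theorem, now using the uniform total variation of $u^\mu$ on $\D_T$ viewed as a function of two variables, extracts a subsequence $u^\mu\to u$ in $\lnorm{1}$ as $\mu\to\infty$; since $h^\mu\to 0$ the error term disappears and $u$ inherits the four estimates stated in the theorem.

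It remains to show that $u$ is an entropy solution of \eqref{e:gs}--\eqref{e:ibc}. Since $H,F,G,\eta,\zeta$ are continuous and $u^{\mu,\nu}\to u$ in $\lnorm{1}$, the relevant test integrals evaluated at $u^{\mu,\nu}$ converge to those evaluated at $u$, so it suffices to control the former. I would test $u^{\mu,\nu}$ against $\phi\in C_c^1(\D_T)$ and split $\D_T$ into the open slabs $Z_j$ and the interfaces $\mmz_j=\{t=jh^\mu\}$. On each $Z_j$ the function $u^{\mu,\nu}$ is an $\enu$-solution of the homogeneous system \eqref{e:hcl}; integrating by parts slab by slab, telescoping the interface contributions, and using that across $\mmz_j$ the update $\ur=H^{-1}[H(\ul)+h^\mu G(\ul)]$ changes the $H$-flux by exactly $h^\mu G(\ul)$, one finds that $\int\int(\pt\phi\,H+\px\phi\,F)$ equals $-\int\int\phi\,G(u^{\mu,\nu})$ up to a front-tracking residual of order $\enu$ (summable to $O(\enu)$ by the uniform total-variation bound and the control $\le\enu$ on non-physical fronts) and a splitting residual of order $h^\mu$; hence the full weak-form expression $\int\int(\pt\phi\,H+\px\phi\,F+\phi\,G)$ tends to $0$, giving Definition~\ref{d:es}(1) for $u$ in the limit. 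The entropy admissibility Definition~\ref{d:es}(2) is obtained the same way using (H4): within each slab the $\enu$-solution dissipates $\eta$ up to an $O(\enu)$ error from rarefaction fans and non-physical fronts, while the convexity of $\eta$ and the identity $H(\ur)=H(\ul)+h^\mu G(\ul)$ make the entropy produced across $\mmz_j$ match the discretized source of Definition~\ref{d:es}(2) to order $(h^\mu)^2$; after summation the interface production cancels the explicit source integral up to $O(h^\mu)$, the sign-definite dissipation survives, and the entropy inequality holds in the limit. This is the ``standard argument'' of \cite{Amadori-unique_bl,Li-Yu_OC}. Finally, the initial and boundary traces of $u$ match $\iu$, $\go$, $\gt$ since the approximate relations of Definition~\ref{d:he-es} hold with vanishing errors.

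The main obstacle, I expect, is not the compactness but the uniform control of these two consistency residuals through the double limit: the front-tracking residual must remain $O(\enu)$ after summation over the possibly many interactions inside each of the $O(1/h^\mu)$ slabs, and the per-interface splitting residual of order $(h^\mu)^2$ must sum to $O(h^\mu)$ over all interfaces, both uniformly in $\nu$. Performing the limits in the order $\enu\to 0$ before $h^\mu\to 0$ keeps this bookkeeping manageable, since for each frozen $h^\mu$ the inner limit reduces to the homogeneous semi-global theory already established in \cite{Li-Yu_OC}; the only genuinely new point is that the entropy generated by the splitting update is matched, to leading order in $h^\mu$, by the discretized source term, so that no spurious entropy is produced in the limit.
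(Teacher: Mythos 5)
Your proposal follows essentially the same route as the paper's own proof: the paper likewise fixes the double sequence $(h^{\mu},\enu)$, applies Helly's theorem first in $\nu$ (with $\mu$ fixed) and then in $\mu$, relying on the uniform estimates of Proposition~\ref{p:uhe-ex}, and then identifies the limit as an entropy solution by the standard consistency argument (citing \cite{Amadori-unique_bl}). Your slab-and-interface bookkeeping of the $O(\enu)$ front-tracking residual and the $O(h^{\mu})$ splitting residual is exactly the content of that cited standard argument, so the two proofs coincide in substance.
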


\subsection{$ \eh $-solutions}
\label{ss:ehs}

\subsubsection{Inhomogeneous Riemann solver}

Recall that the construction of $ \e $-solutions of  quasilinear hyperbolic system of conservation laws \eqref{e:hcl} is based on Riemann solvers which solve approximately the Riemann problem of system \eqref{e:hcl} associated with a piecewise constant initial data 
\begin{equation}\label{e:ri}
u(0,x)=
\begin{cases}
\ul & \text{if } x<x_0,\\
\ur & \text{if } x>x_0
\end{cases}
\end{equation}
for any given $ x_0\in(0,L) $.

For the quasilinear hyperbolic system of balance laws \eqref{e:gs}, following the idea used in \cite{Amadori-BV-BL}, we now take into account the effect of the source term by using $h$-Riemann solver which approximately solves the Riemann problem  \eqref{e:gs} and \eqref{e:ri} by introducing an extra stationary discontinuity, called \emph{zero wave}, across the line $ x=x_0 $.  More precisely, as the map $ u \mapsto F(u) $ is invertible for $ u\in \brz $, for small $ h>0 $ we can define
\begin{equation*}\label{key}
\phu{u}:=F^{-1}\left[ F(u)+G(u)h\right].  
\end{equation*}
It is easy to see that  for any fixed $ h>0 $, the $ C^2 $-norm of the map $ u\mapsto \phu{u} $ is bounded by a positive constant independent of $ h $, and the map $h\mapsto \phu{u} $ is Lipschitz continuous. We say that $ u=u(t,x) $ is an \emph{$ h $-Riemann solver} for problem  \eqref{e:gs} and \eqref{e:ri}, if the following conditions hold:
\begin{enumerate}[(a)]
	\item there exist two states $ u^- $ and $ u^+ $ satisfying $ u^+=\phu{u^-} $;
	\item $ u $ coincides with the solution to the homogeneous Riemann problem \eqref{e:hcl} with the initial data $ [\ul,u^-] $ on the set $ \{0<t<T, x<x_0\} $ and in the mean time, $ u $ coincides with the solution to the homogeneous Riemann problem \eqref{e:hcl} with initial data $ [u^+,\ur] $ on the set $ \{ 0<t<T, x>x_0\} $;
	\item the Riemann problem between $ \ul $ and $ u^- $ is solved only by waves with negative speed;
	\item the Riemann problem between $ u^+ $ and $ \ur $ is solved only by waves with positive speed.
	
\end{enumerate}

The next lemma concerns the existence and uniqueness of the $ h $-Riemann solver.
\begin{lemma}[\cite{Amadori-BV-BL}]
	\label{l:26}
	There exist a positive constant $ h_1 $ such that for all $ h\in (0,h_1) $ and for all $ \ul,\ur \in\brz$, there exists a unique $ h $-Riemann solver. More precisely, there exist $ n+2 $ states $ \ul=\omega_0,\om_1,...,\om_n,\om_{n+1}=\ur $ and $ n $ wave amplitudes $ \sigma_1,...,\sigma_n $, depending smoothly on $ \ul,\ur $, such that
	\begin{itemize}
		\item[(1)] $\om_i=\Psi_i[\sigma_i](\om_{i-1})$,\qquad $ i=1,...,m,m+2,...,n+1 $;
		\item[(2)] $ \om_{m+1}=\phu{\om_m} $.
	\end{itemize}
\end{lemma}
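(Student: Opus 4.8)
The plan is to solve the defining relations of the $h$-Riemann solver by the implicit function theorem, regarding the zero wave as a smooth perturbation of the identity. Composing the elementary wave curves with the zero-wave map, I would define for $\sigma=(\sigma_1,\dots,\sigma_n)$ the endpoint map
\[
\Theta(\sigma;\ul,h):=\Psi_n[\sigma_n]\circ\cdots\circ\Psi_{m+1}[\sigma_{m+1}]\circ\ph\circ\Psi_m[\sigma_m]\circ\cdots\circ\Psi_1[\sigma_1](\ul),
\]
so that constructing an $h$-Riemann solver amounts exactly to solving $\Theta(\sigma;\ul,h)=\ur$ for $\sigma$; the intermediate states $\om_1,\dots,\om_{n+1}$ are read off from the successive partial compositions, with the zero-wave state $\om_{m+1}=\phu{\om_m}$ inserted between families $m$ and $m+1$. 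Since $\phu{u}=F^{-1}[F(u)+hG(u)]$ gives $\Phi_0=\mathrm{id}$, at $h=0$ the equation $\Theta(\sigma;\ul,0)=\ur$ is precisely the classical Riemann problem for \eqref{e:hcl}, whose unique solvability for $\ul,\ur$ in a sufficiently small ball is Lax's theorem. Thus the whole statement is a single application of the implicit function theorem with $h$ entering as an extra parameter.

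First I would record the regularity and base-point data. The map $\Theta$ is $C^2$ in $(\sigma,\ul)$ because each $\Psi_i$ is $C^2$ and $\ph$ is $C^2$ with $C^2$-norm bounded uniformly in $h$ (as noted right after the definition of $\ph$), and it is Lipschitz in $h$. Fix any $w\in\brz$ and take the base point $\sigma=0$, $h=0$, $\ul=\ur=w$; there $\Theta(0;w,0)=w$, and by the chosen parametrization of the wave curves $\partial\Theta/\partial\sigma_i|_{0}=r_i(w)$, since at $\sigma=0,h=0$ every outer factor is the identity. By strict hyperbolicity (H1) the eigenvectors $r_1(w),\dots,r_n(w)$ are linearly independent, so $D_\sigma\Theta|_{0}=[\,r_1(w),\dots,r_n(w)\,]$ is nonsingular. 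The implicit function theorem then yields, near this base point, a unique $\sigma=\sigma(\ul,\ur,h)$ depending in a $C^2$ manner on $(\ul,\ur)$ and Lipschitz-continuously on $h$, with $\sigma(w,w,0)=0$; this produces the states and amplitudes of the lemma with the asserted smooth dependence.

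To pass from this local statement to the uniform one — a single $h_1$ valid for all $\ul,\ur\in\brz$ — I would exploit the smallness of the radius $r$. Any two points of $\brz$ lie within distance $2r$, and $\ph$ moves points only by $O(hr)$ since $G(0)=0$, so all intermediate states $\om_0,\dots,\om_{n+1}$ and all amplitudes $\sigma_i$ stay $O(r)$, uniformly. Combining the uniform $C^2$-bounds on the $\Psi_i$ and on $\ph$ with a uniform lower bound for $|\det[\,r_1,\dots,r_n\,]|$ on the compact closure of $\brz$ (where the $r_i$ are continuous and stay independent by (H1)), a quantitative version of the implicit function theorem furnishes one threshold $h_1>0$ and one small amplitude range valid simultaneously for all $\ul,\ur\in\brz$. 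The speed conditions (c)--(d) are then automatic: by (H2) the speeds of families $1,\dots,m$ stay below $-c$ and those of $m+1,\dots,n$ above $c$ once the $\sigma_i$ are small, so the waves left of the zero wave have negative speed and those to its right positive speed.

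The main obstacle is not the local existence, which is routine, but securing the uniformity in both $\ul,\ur\in\brz$ and $h\in(0,h_1)$ with one threshold $h_1$ independent of the base point $w$. The qualitative implicit function theorem only supplies a neighborhood and a threshold depending on $w$; turning it into a quantitative statement with a uniform $h_1$ is exactly where the smallness of $r$, the uniform control of the $C^2$-norms, and the uniform lower bound on $|\det[\,r_1,\dots,r_n\,]|$ are needed. Since the result is quoted from \cite{Amadori-BV-BL}, I expect the write-up either to cite that reference or to reproduce this implicit-function scheme in condensed form.
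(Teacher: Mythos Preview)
Your proposal is correct and is the standard implicit function theorem argument for this type of result. The paper does not supply its own proof of this lemma: it is stated with a direct citation to \cite{Amadori-BV-BL} and used as a black box, exactly as you anticipated in your closing remark.
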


\subsubsection{Construction of $ \eh $-solutions}
\label{sss:cehs}

First, following \cite{Amadori-BV-BL} we give the definition of $ \eh $-solutions.
\begin{definition}\label{d:eh-es}
	For any given time $ T>0 $ and any fixed $\e>0$, we say that a continuous map
	\[
	t\mapsto \ueh(t,\cdot)\in \lnorm 1(0,L), \quad \forall t\in (0,T)
	\]
	is an \emph{$(\e,h)$-solution} to quasilinear hyperbolic system of balance laws \eqref{e:gs} on the domain $ \D_T $ if 
	\begin{itemize}
		\item As a function of two variables, $\ueh=\ueh(t,x)$ is piecewise constant with discontinuities occurring along finitely many straight lines which are called \emph{fronts}.
		\item Restricted on the domain $ Z'_j:=\{(j-1)h<x<jh,\ 0<t<T\}\cap \D_T $ $ (j=1,..., [L/h]+1) $, where $ [a] $ denotes the integer part of $ a $, $ \ueh $ is an $ \e $-solution to the homogeneous system of conservation laws \eqref{e:hcl}.
		\item  Along the segment $ \mmz'_j: =\{0<t<T\}\times\{x=jh\}\ (j=1,..,[L/h])$, the values $ \ul=\ueh(t,jh-) $ and $ \ur=\ueh(t,jh+) $ satisfy $ \ur=\phu{\ul} $ for all $ t\in (0,T) $ except at the points where fronts interact with the segment $ \mmz'_j $, and we also call the discontinuity with zero speed the zero-wave.
	\end{itemize}	
	Moreover, if the initial and boundary values of $\uhe$ satisfy approximately the initial-boundary conditions \eqref{e:ibc}, namely, 
	\begin{subequations}
		\label{d:ueh-aib}
		\begin{gather}
		\|\ueh(0,\cdot)-\bar u(\cdot)\|_{\lnorm 1(0,L)}\leq \e,\label{d:ueh-ai}\\
		\|b_1\big(\ueh(\cdot,0+)\big)-g_1(\cdot)\|_{\lnorm{1}(0,T)}\leq \e,\quad \|b_2\big(\ueh(\cdot,L-)\big)-g_2(\cdot)\|_{\lnorm{1}(0,T)}\leq \e, \label{d:ueh-ab}
		\end{gather}
	\end{subequations}
	then $\ueh=\ueh(t,x)$ is called the \emph{$\eh$-solution} to the mixed initial-boundary value problem \eqref{e:gs}-\eqref{e:ibc}.
\end{definition}

From the definition, we see that for $\eh$-solutions, besides rarefaction fronts, shocks and contact discontinuities, there are also \emph{zero waves} which travel with the zero speed.

For any given positive constants $ \e$ and $h $ and for any given initial-boundary data $ (\iu,\go,\gt) $ with $ \lug $ sufficiently small, as in the construction of $ \he $-solutions, we first choose piecewise constant approximate initial-boundary data $ (\iue,\goe,\gte) $ satisfying \eqref{h:ib-app}, then the piecewise constant approximate solution $ u=\ueh(t,x) $ can be constructed for sufficiently small $ t $ as follows.
\begin{itemize}
	\item At each point $ x=jh $ for $ j\in (0,L/h)\cap \Z $, we apply the \emph{approximate $ h $-Riemann solver} which just replaces rarefaction waves in the solution generated by the $ h $-Riemann solver, by $ m=\left[\sigma /\e\right]+1$ fronts, each of which has amplitude $ \sigma/m\le\e $ as in the approximate Riemann solver for homogeneous system \eqref{e:hcl}. We denote by $ \mz $ the set of zero-waves.
	\item At the jump point of $ \iue $ outside $ \mz $ and at boundaries $ x=0$ and $ x=L $, we use a classical Riemann solver for system\eqref{e:hcl} (see \cite{Li-Yu_OC}). 
\end{itemize}

%by applying the approximate $ h $-Riemann solver at every point $ x=jh $, and by solving the remaining discontinuities in $ \bu^{\e} $ using a classical approximate Riemann solver for \eqref{e:hcl} (see, for example, \cite{Li-Yu_OC}). 

All fronts travel with constant speed until they interact with each other or hit the boundary.  Here, by a slight modification of the speed of physical fronts, we may assume that no more than two fronts interact at an inner point and no more than one front hits the boundary at a time. At every interaction point, a new Riemann problem arises as in the case of $ \e $-solutions. In order to prevent that the number of fronts becomes infinite in a finite time, a \emph{simplified approximate Riemann solver} has been proposed when the corresponding interaction strength is below a certain threshold value $ \rho $. When one front interacts with the boundary or two fronts interact at  a point $ (\tau,\xi)\in  Z'_j $ for some $ j\in (0,T/h)\cap \Z $, we use the classical approximate or simplified approximate Riemann solver for the homogeneous system \eqref{e:hcl} as in \cite{Li-Yu_OC}.  When a front $ \alpha\in \P \cup\NP$ with amplitude $ \sa $ interacts with the front $ \beta \in\mz $ with amplitude $ \sb $ at a given point $ (\tau,\xi) $, without loss of generality, we may assume that the front $ \alpha $ is located on the left of $ \beta $, then the following different situations should be considered for some prescribed constant $ \rho $ which will be specified later:

\begin{itemize}
	\item If $ \alpha\in \P $ and $ |\sa\sb|\ge \rho $, we use the approximate $ h $-Riemann solver.
	\item If $ \alpha\in \P $ and $ |\sa\sb|<\rho $, we use the \emph{simplified approximate $ h $-Riemann solver} defined as follows: let $ \ul $, $ \um=\Psi_{\ka}[\sa](\ul) $ and $ \ur=\phu{\um} $ be the states before the interaction. Taking
	\[
	\tum=\Phi_h(\ul),\qquad \tur=\Psi_{\ka}[\sa](\tum),
	\]
	the interaction generates three fronts: a zero-front $ [\ul,\tum] $, a physical front $ [\tum,\tur] $ and a non-physical front $ [\tur,\ur] $.
	\item If  $ \alpha\in \NP $, we use a \emph{crude  $ h $-Riemann solver} defined as follows: let $ \ul,\ \um $ and $ \ur=\phu{\um} $ be the states before the interaction. Defining $ \tum=\phu{\ul} $, two fronts are generated by the interaction: a zero-front $ [\ul,\tum] $ and a non-physical front $ [\tum,\ur] $.
\end{itemize}

This algorithm is well-defined if we can prove that the total amplitude of fronts remains sufficiently small and the total number of fronts can be controlled. %As in the case of $ \he $-solutions, we can prove this by induction (
The details of the proof can be found in Appendix. In fact, we can prove the following

\begin{proposition}
	\label{p:ueh-ex}
	For any fixed $T>0$, there exist positive constants $\delta$ and $C(T)$ such that for every initial-boundary data $(\iu,\go,\gt)$ with
	\begin{equation*}
	\lug+\gamma L \le \delta,
	\end{equation*}
where $ \gamma  $ is defined in Theorem \ref{t:cl}, and for each $\e>0,\ h>0$, there exists an $\eh$-solution  $\ueh=\ueh(t,x)$ to the initial-boundary value problem \eqref{e:gs}-\eqref{e:ibc} for all $t \in (0,T)$, satisfying
	\begin{subequations}
		\begin{align}
		\tv{\ueh(t,\cdot)}{0<x<L}+\gamma L & \le C(T) ( \lug+\gamma L), \quad \forall t\in (0,T), \label{bd:ueh-tvx}\\
		\tv{\ueh(\cdot,x)}{0 <t<T} &\le C(T) \left(\lug+\gamma L\right),\quad \forall  x\in (0,L),\label{bd:ueh-tvt}\\
		\|\ueh(t,\cdot)-\ueh(s,\cdot)\|_{\lnorm 1(0,L)} &\leq C(T)  |t-s|,\quad \forall t,s \in (0,T),	\label{bd:ueh-L1t}\\
		\|\ueh(\cdot,x)-\ueh(\cdot,x')\|_{\lnorm 1(0,T)} &\leq C(T) |x-x'|,\quad \forall x,x' \in (0,L).\label{bd:ueh-L1x}
		\end{align}
	\end{subequations}
\end{proposition}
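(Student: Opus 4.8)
The plan is to run the wave-front tracking scheme exactly as in Proposition~\ref{p:uhe-ex}, but now organised around the vertical grid lines $\mmz'_j=\{x=jh\}$ rather than the horizontal ones, and to control the resulting piecewise constant function by a single Glimm-type functional. The basic bookkeeping observation is that a \emph{zero wave} sitting on $x=jh$ has amplitude of order $O(\gamma h)$, since $\phu{u}-u=F^{-1}[F(u)+hG(u)]-u=O(\|G\|_{\infty}h)$, and because there are at most $[L/h]+1$ such lines their total strength is $O(\gamma L)$. This is the origin of the $\gamma L$ terms appearing throughout \eqref{bd:ueh-tvx}--\eqref{bd:ueh-L1x}.

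First I would record the local interaction estimates that are not yet covered by the properties \ref{item:1}--\ref{item:6} of Section~\ref{ss:es}, namely those for the three solvers that resolve the collision of a front $\alpha$ with a zero wave $\beta$ ($|\sigma_\beta|=O(\gamma h)$). Using the smooth dependence on $(\ul,\ur)$ of the decomposition in Lemma~\ref{l:26} and a second-order Taylor expansion, I expect that for the approximate and simplified $h$-Riemann solvers the transmitted physical front $\alpha'$ (with $\tum=\phu{\ul}$, $\tur=\Psi_{\ka}[\sa](\tum)$) and the created non-physical front of strength $\sigma_{\mathrm{np}}$ satisfy
\[
|\sigma_{\alpha'}-\sigma_\alpha|+\sigma_{\mathrm{np}}\le C\,|\sigma_\alpha|\,|\sigma_\beta|\le C\gamma h\,|\sigma_\alpha|,
\]
with the zero wave reproduced on the far side at amplitude $O(\gamma h)$, while for the crude $h$-Riemann solver an incoming non-physical front satisfies $|\sigma_{\mathrm{np}}'-\sigma_{\mathrm{np}}|\le C\gamma h\,\sigma_{\mathrm{np}}$. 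Thus each passage of a front through a grid line rescales its amplitude by a factor $1+O(\gamma h)$ and injects a non-physical front of size $O(\gamma h\,|\sigma_\alpha|)$.

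Next I would set up the functional. Let $V(t)$ be the total strength of the travelling (physical and non-physical) fronts in $\ueh(t,\cdot)$, let $Q(t)=\sum|\sigma_\alpha\sigma_\beta|$ be the usual interaction potential over approaching pairs, and put $\Upsilon(t)=V(t)+C_0Q(t)$. At a homogeneous inner interaction or at a boundary interaction, the estimates \eqref{eq:2-1}, \eqref{eq:2-2}, \eqref{eq:4} and \eqref{eq:1} give $\Delta\Upsilon\le 0$ for $C_0$ large and the total strength small, exactly as in \cite{Li-Yu_OC}. At a zero-wave crossing the new estimates give $\Delta V\le C\gamma h\,|\sigma_\alpha|$ and $\Delta Q\le C\,V\,\gamma h\,|\sigma_\alpha|$, hence $\Delta\Upsilon\le C\gamma h\,|\sigma_\alpha|$. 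To sum these increments I would use that, for each fixed line $x=jh$, the quantity $\sum_{\text{crossings}}|\sigma_\alpha|$ is precisely $\tv{\ueh(\cdot,jh)}{0<t<T}$, itself dominated by $\sup_t\Upsilon$; since there are $O(L/h)$ lines each weighted by $\gamma h$, the total increase is $O(\gamma L)\sup_t\Upsilon$, which is absorbed into the left-hand side once $\gamma L$ is taken small (a discrete Gronwall/absorption argument). After adding the $O(\gamma L)$ contribution of the standing zero waves this yields the spatial bound \eqref{bd:ueh-tvx}; the temporal bound \eqref{bd:ueh-tvt} follows because $\tv{\ueh(\cdot,x)}{0<t<T}$ equals the strength of fronts crossing $x=\mathrm{const}$, again controlled by $\Upsilon$; and the two $\lnorm1$-Lipschitz estimates \eqref{bd:ueh-L1t}--\eqref{bd:ueh-L1x} follow from finite propagation speed (bounded above by $\max_i|\lambda_i|$ and below by $c$ from \eqref{h:nonzero-char}) together with the variation bounds, the horizontal zero waves contributing only $O(\gamma h)$ per crossed grid line to the $x$-increment.

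The main obstacle I anticipate is not the variation bound itself but the \emph{finiteness} of the scheme: each of the $\sim L/h$ grid lines is a perpetual source of non-physical fronts, so the threshold $\rho=\rho(\e,h)$ separating the simplified from the full $h$-Riemann solver must be chosen so that (i) the simplified solver fires often enough to keep the number of physical fronts from growing, (ii) the crude solver keeps the total non-physical strength below $\e$ uniformly in $t$ (the analogue of property~(3) in Definition~\ref{d:es-h}), and (iii) the replacement error of the simplified resolutions stays summable against $\Upsilon$. Balancing these three competing requirements while keeping $\Upsilon$ uniformly small is the delicate point; once it is settled, a standard argument shows the algorithm reaches $t=T$ with finitely many fronts and all four estimates hold.
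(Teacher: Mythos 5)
Your treatment of the boundary interactions contains the real gap. You work with a single, unweighted global functional $\Upsilon(t)=V(t)+C_0Q(t)$ and assert that at a boundary interaction the estimates \eqref{eq:4} and \eqref{eq:1} give $\Delta\Upsilon\le 0$ ``exactly as in \cite{Li-Yu_OC}''. They do not: \eqref{eq:4} only bounds the reflected strength by $C|\sigma_\alpha|$, and under (H5) the reflection constant $C$ is in general larger than $1$, so at a boundary hit one only gets $\Delta V\le (C-1)|\sigma_\alpha|$, which may be strictly positive, while $Q$ increases as well. This is precisely why \cite{Li-Yu_OC}, and the paper's own proof of this proposition, do \emph{not} use an unweighted global functional: they use the weighted functionals of \eqref{def:Vel} and \eqref{def:Ver} (weight $K\ge\max\{4C,1\}$ on fronts moving toward the relevant boundary, weight $0$ on fronts moving away), restricted to the shrinking trapezoids $\mfl_t$ and $\mfr_t$, each of which sees only one boundary; a left reflection then converts weight-$K$ strength into weight-$0$ strength and yields the decrease \eqref{e:u-2}. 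No single global weighting can work in the presence of two boundaries (a weight that decreases under left reflection increases under right reflection), so the global bound with constant $C(T)$ is assembled from $\D_{\hatt}\subseteq\mfl\cup\mfr$ together with induction over time intervals of length $\hatt$; this is also where the $T$-dependence of $C(T)$ comes from. Your proposal omits this device entirely, and without it the functional is not controlled past the first boundary reflection.

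On the zero waves you take a genuinely different route from the paper, and it has its own soft spot. The paper, following \cite{Amadori-BV-BL}, counts the zero waves inside $V$ and enlarges the set of approaching pairs to $\tilde{\A}$ (a zero wave is approaching exactly those physical fronts that will cross it), so that the decrease of $Q$ at each crossing absorbs the $O(\gamma h|\sigma_\alpha|)$ production and $\Upsilon^L,\Upsilon^R$ are genuinely non-increasing; the $\gamma L$ in \eqref{bd:ueh-tvx} then enters through the equivalence \eqref{e:tv-v} of Lemma \ref{l:3} between $V$ and $\tv{\ueh(t,\cdot)}{0<x<L}+\gamma L$, with no Gronwall argument at all. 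Your alternative (homogeneous potential plus absorption of a total increment of size $O(\gamma L)\sup_t\Upsilon$) could likely be closed, since $\gamma L\le\delta$ is small, but as written it is circular: you bound the increment generated at the line $x=jh$ by the cumulative crossing strength, identify it with $\tv{\ueh(\cdot,jh)}{0<t<T}$, and then dominate that by $\sup_t\Upsilon$. That temporal variation bound is itself conclusion \eqref{bd:ueh-tvt}, and a cumulative, all-time crossing strength is not dominated by an instantaneous supremum of the functional; it requires the standard ``initial functional plus total interaction production'' bookkeeping, which has to be run inside the same bootstrap you are trying to close. Both this circularity and the boundary issue are repaired simultaneously by adopting the paper's weighted trapezoid functionals with the extended approaching set $\tilde{\A}$.
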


Moreover, for a fix pair of $ (\e,h) $, we have the approxiamte stability for $ \eh $-solutions.   
\begin{figure}
	\begin{minipage}[t]{0.5\linewidth} % 如果一行放2个图，用0.5，如果3个图，用0.33
		\centering
		\includegraphics[width=6.3cm]{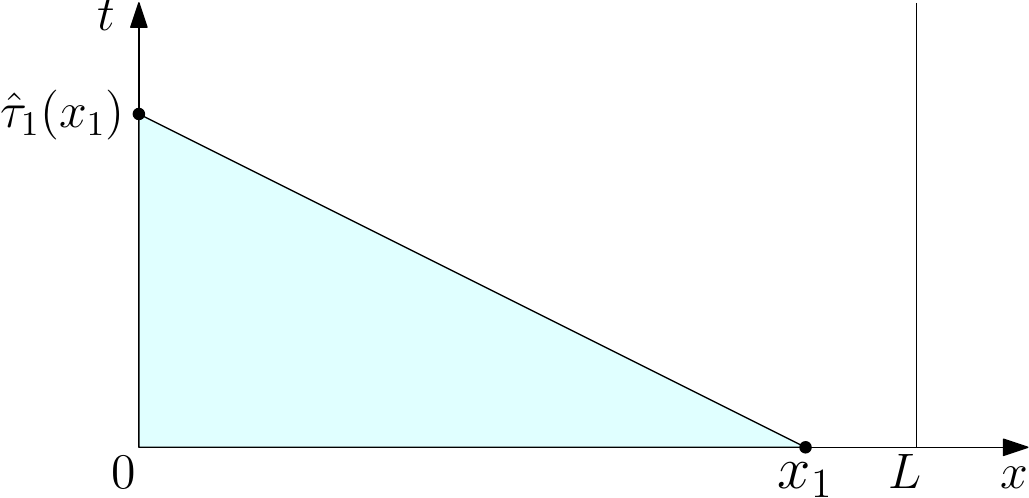}
		\caption{triangular domain $ \mfl'(x_1) $}
		\label{f:lt}
	\end{minipage}%
	\begin{minipage}[t]{0.5\linewidth}
		\centering
		\includegraphics[width=6cm]{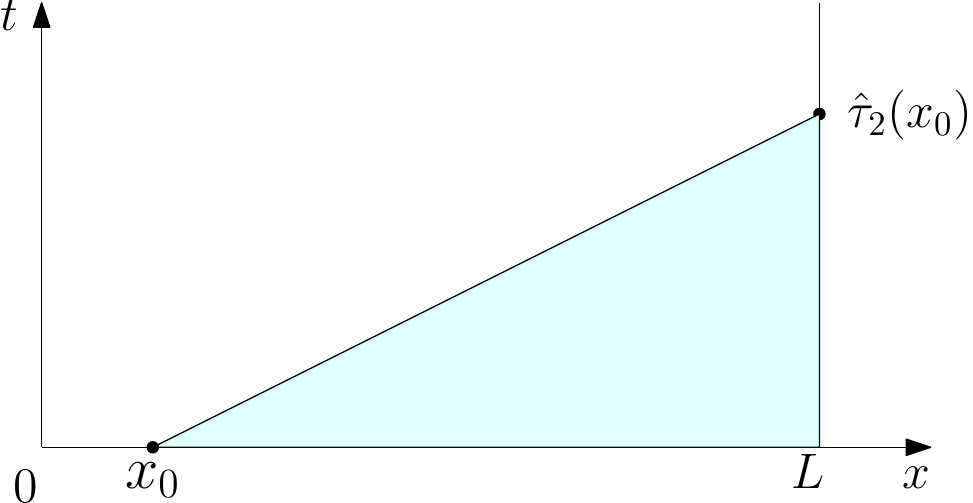}
		\caption{triangular domain $\mfr'(x_0)$}
		\label{f:rt}
	\end{minipage}
\end{figure}
\begin{proposition}
	\label{p:ueh-stab}
	For any given $T>0$, let $\ueh,\veh$ be two $\eh$-solutions to problem \eqref{e:gs}-\eqref{e:ibc} on the domain $\{0<t<T,0<x<L\}$, given by Proposition \ref{p:ueh-ex}. There exists a positive constant $ C $ independent of the choice of $ x_1\in (0,L] $ and $x_0\in [0,L)$, such that
	\begin{equation}
	\label{est:ueh-sl}
	\begin{split}
	&\|\ueh(t,\cdot)-\veh(t,\cdot)\|_{\lnorm 1(\mflt(x_1))} \leq  C \bigg(\|\ueh(0,\cdot)-\veh(0,\cdot)\|_{\lnorm 1(0,x_1)} \\
	&\qquad \qquad + \int^{t}_0 |b_1(\ueh(s,0+))-b_1(\veh(s,0+))|ds +\e t \bigg), \quad \forall t\in[0,\hatt(x_1)],
	\end{split}
	\end{equation}
	where $\hat{\tau}_1(x_1)=x_1\min_{u\in\brz}\{|\lambda_1(u)|^{-1}\}$ and 
	\[\mfl_t(x_1)=\{x\ |\ 0<x<x_1(\hat{\tau}_1(x_1)-t)/\hat{\tau}_1(x_1)\},
	\]
	and
	\begin{equation}
	\label{est:ueh-sr}
	\begin{split}
	&\|\ueh(t,\cdot)-\veh(t,\cdot)\|_{\lnorm 1(\mathfrak{R}_t(x_0))} \leq  C \bigg(\|\ueh(0,\cdot)-\veh(0,\cdot)\|_{\lnorm 1(x_0,L)} \\
	& \qquad \qquad \qquad + \int^{t}_0 |b_2(\ueh(s,L-))-b_2(\veh(s,L-))|ds +\e t \bigg),\quad \forall t\in [0,\hatt(x_0)],
	\end{split}
	\end{equation}
	where  $\hat{\tau}_2(x_0)=x_0\min_{u\in\brz}\{|\lambda_n(u)|^{-1}\}$ and 
	\[\mfr_t(x_0)=\{x\ |\ (L-x_0)t/\hatt_2(x_0)+x_0<x<L \},
	\]
	and there exists a positive constant $ C(T) $ depending on time $ T $, such that
	\begin{equation}
	\label{est:ueh-s}
	\begin{split}
	&\|\ueh(t,\cdot)-\veh(t,\cdot)\|_{\lnorm 1(0,L)}\\
	\leq & C(T)\bigg(\|\ueh(0,\cdot)-\veh(0,\cdot)\|_{\lnorm 1(0,L)} + \int^{t}_{0} \Big(\big|b_1(\ueh(s,0+))-b_1(\veh(s,0+))\big|\\
	& \qquad \qquad + \big|b_2(\ueh(s,L-))-b_2(\veh(s,L-))\big|\Big) ds +\e \bigg), \quad \forall t\in (0,T).
	\end{split}
	\end{equation}
\end{proposition}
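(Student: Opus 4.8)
The plan is to prove all three estimates from a single weighted Lyapunov functional $\Phi=\Phi\big(\ueh(t,\cdot),\veh(t,\cdot)\big)$ that is equivalent to the $\lnorm 1$-distance of the two solutions and whose growth in $t$ is controlled, combined with a finite-propagation-speed argument to localize the first two estimates onto the triangular domains. First I would build $\Phi$ following the construction used for the homogeneous boundary problem in \cite{Li-Yu_OC} (itself an adaptation of Bressan's functional \cite{Bressan2000}), augmented by the zero-wave bookkeeping of \cite{Amadori-BV-BL}. For a.e.\ fixed $t$, I decompose the jump $[\ueh(t,x),\veh(t,x)]$ along the $n$ elementary wave curves into scalar components $q_i(t,x)$ and set $\Phi(t)=\sum_{i=1}^n\int_0^L \ab{q_i(t,x)}\,W_i(t,x)\,dx$, where the $W_i$ are the Bressan weights built from the strengths of the physical fronts of $\ueh$ and $\veh$ approaching the $i$-th component and from the Glimm interaction potentials of the two solutions. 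By the $BV$ bounds of Proposition \ref{p:ueh-ex} the weights remain in a fixed bounded range, so $\Phi(t)$ is equivalent to $\|\ueh(t,\cdot)-\veh(t,\cdot)\|_{\lnorm 1(0,L)}$ up to a universal constant.

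Second, I would estimate the time variation of $\Phi$. Between interaction times $\Phi$ is piecewise affine in $t$, and its jumps at interaction times are controlled by the interaction estimates recalled in Section \ref{ss:es}: interior physical-front interactions and crossings give $\Delta\Phi\le 0$ exactly as in the homogeneous case; fronts reaching the boundaries $x=0$ and $x=L$ produce, via the structure condition (H5) and \eqref{eq:4}, terms bounded by $\ab{b_1(\ueh(t,0+))-b_1(\veh(t,0+))}$ and $\ab{b_2(\ueh(t,L-))-b_2(\veh(t,L-))}$ respectively; and the approximate wave speeds together with the non-physical fronts contribute a total rate $\le C\e$. The genuinely new contribution comes from the zero-waves: since $u^+=\phu{u^-}$ and $\ph$ is Lipschitz with $\|D(\ph-\mathrm{id})\|_\infty\le C\gamma h$, resolving each front/zero-wave interaction by the (simplified and crude) approximate $h$-Riemann solvers of Lemma \ref{l:26} shows that each crossing changes $\Phi$ by $O(\gamma h\,\ab{q})$, while a front crosses the $O(L/h)$ lines $x=jh$ at rate $O(1/h)$, so the aggregate rate is $O(\gamma)\Phi(t)$. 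Collecting the contributions,
\begin{align*}
\frac{d}{dt}\Phi(t)\le {}& C\gamma\,\Phi(t)+\ab{b_1(\ueh(t,0+))-b_1(\veh(t,0+))}\\
& +\ab{b_2(\ueh(t,L-))-b_2(\veh(t,L-))}+C\e.
\end{align*}
Integrating by Gronwall's inequality on $[0,T]$ (with $\gamma$ small, so that $e^{C\gamma T}=C(T)$) and using the equivalence of $\Phi$ and the $\lnorm 1$-distance yields \eqref{est:ueh-s}.

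Third, for the localized estimates \eqref{est:ueh-sl} and \eqref{est:ueh-sr} I would invoke finite propagation speed. All physical fronts travel within $C\e$ of a characteristic speed $\lambda_i\in[\lambda_1,\lambda_n]$, the non-physical fronts travel rightward with speed in $(0,c)$, and the zero-waves are stationary. Hence the right edge of $\mfl_t(x_1)$, which recedes to the left at the maximal leftward speed $\max_u\ab{\lambda_1(u)}=x_1/\hatt_1(x_1)$, cannot be crossed from the right except for an $O(\e)$ leak absorbed into the $\e t$ term; thus the restrictions of $\ueh,\veh$ to $\mfl_t(x_1)$ depend only on the initial data on $(0,x_1)$ and on the left boundary data at $x=0$. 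Running the same functional argument with $\Phi$ restricted to $\mfl_t(x_1)$—where the $b_2$-boundary term is absent and the horizon $\hatt_1(x_1)\le CL$ keeps the Gronwall factor bounded by a universal constant—gives \eqref{est:ueh-sl}, and \eqref{est:ueh-sr} follows symmetrically on $\mfr_t(x_0)$.

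The main obstacle, I expect, is the zero-wave bookkeeping in the second step: one must verify, \emph{uniformly in $h$}, that the weighted functional grows by no more than $C\gamma\Phi$ across the $O(L/h)$ stationary zero-waves and the associated front/zero-wave interactions, so that the $1/h$ density of crossings cancels exactly against the $O(\gamma h)$ effect of each crossing to leave a clean $\gamma$-proportional rate. This is precisely where the Lipschitz-in-state and $O(\gamma h)$-smallness of $\ph$ must be combined with the interaction bounds of the approximate $h$-Riemann solvers, and it is the only place where the structure peculiar to balance laws, rather than the homogeneous theory of \cite{Li-Yu_OC}, is really needed.
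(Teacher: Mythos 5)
Your third step---the localized estimates \eqref{est:ueh-sl}--\eqref{est:ueh-sr} via a weighted functional on the shrinking one-boundary domain, with the receding edge contributing a term of favorable sign and a Gronwall horizon of length at most $L\max_{u\in\brz}|\lambda_1(u)|^{-1}$---is essentially the paper's actual proof, and your accounting of the zero-waves (an aggregate drift of order $\gamma$ times the functional, uniformly in $h$, from the $O(\gamma h)$ effect of $\Phi_h$ at each of the $O(L/h)$ lines) is the right mechanism, as in \cite{Amadori-BV-BL}. The genuine gap is in your second step, i.e.\ in the proof of the global estimate \eqref{est:ueh-s}. A single functional on all of $(0,L)$ carrying both boundaries cannot satisfy the differential inequality you claim, because of a weight conflict between the two boundaries. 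At $x=0$ the entering (positive-speed) components of $q$ are controlled, via Lemma \ref{l:7}, by a constant $C_0$ times the exiting (negative-speed) components plus the $b_1$-difference; to make that boundary term sign-favorable you must weight the components $i\le m$ by some $\bar K \gtrsim C_0\Lambda/c$ relative to the components $i>m$, where $\Lambda=\max_i\max_u|\lambda_i(u)|$. At $x=L$ the roles of the two groups are exactly reversed, and the same weights must satisfy the opposite inequality; together the two requirements force $c^2\gtrsim C_0C_0'\Lambda^2$, which is false since $c<\Lambda$ and the constants in Lemma \ref{l:7} are in general at least $1$. Concretely, with your weights the flux term at $x=L$ contains the uncontrolled positive quantity $\sum_{i\le m}|\lambda_i|\,W_i\,|q_i(t,L-)|$, a boundary trace of $|\ueh-\veh|$ which is dominated neither by $|b_2(\ueh(t,L-))-b_2(\veh(t,L-))|$, nor by $\Phi(t)$, nor by $\e$.

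This is not a technicality: boundary reflections can amplify the $\lnorm 1$-distance by a fixed factor greater than $1$ at each transit time of order $L/c$, even when $\gamma=0$, $\e\to 0$ and the two solutions have identical boundary data. Hence no inequality of the form $\frac{d}{dt}\Phi\le C\gamma\Phi+(\text{boundary differences})+C\e$ can hold; your Gronwall factor $e^{C\gamma T}$ would make $C(T)$ essentially independent of $T$ for $\gamma$ small, which is stronger than \eqref{est:ueh-s} and false in general. The $T$-dependence of $C(T)$ is produced by reflection, not by the source term. The paper's proof avoids this by reversing your order: it first establishes the one-boundary estimates \eqref{est:ueh-sl} and \eqref{est:ueh-sr} (your third step), then obtains \eqref{est:ueh-s} by covering $(0,L)$ with $\mfl_t(L)\cup\mfr_t(0)$ on a short time interval $[0,\hatt)$, applying the two one-sided estimates there, and iterating on the intervals $[j\hatt,(j+1)\hatt)$; the constant compounds as $C^{T/\hatt}$, which is exactly where $C(T)$ comes from. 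Restructuring your argument in this way repairs it; the global two-boundary functional as you describe it does not work.
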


\begin{remark}
	\label{r:ueh-u}
	Estimates \eqref{est:ueh-sl} and \eqref{est:ueh-sr} concern the approximate stability of $ \eh $-solutions on the triangle domains $ \mfl'(x_1):=\{ (t,x)\ |\ 0<t<\hatt_1(x_1),\  0<x<x_1(\hatt_1(x_1)-t)/\hatt_1(x_1) \} $ (see Fig. \ref{f:lt}) and $ \mfr'(x_0):=\{ (t,x)\ |\ 0<t<\hatt_2(x_0),\ (L-x_0)t/\hatt_2(x_0)+x_0<x<L \} $ (see Fig. \ref{f:rt}), respectively. This two estimates are not only useful in proving the estimate \eqref{est:ueh-s} by induction, but also needed to show the determined domain of limit solutions to the mixed initial-boundary value problem with one-sided boundary.
\end{remark}

\subsubsection{The limit of $ \eh $-solutions}

Now we fix $ h>0 $ and choose a sequence  $ (\enu,h) $ with $\enu\to 0$ as $ \nu\to +\infty $. For each $\nu\ge 1$, Proposition \ref{p:ueh-ex} yields an $(\enu,h)$-solution $\unuh$ to problem \eqref{e:gs}-\eqref{e:ibc}, such that for all $ \nu\ge 1 $ the maps $t \mapsto \unuh(t,\cdot)$ are uniformly Lipschitz continuous in $ \lnorm 1 $ norm with respect to $ t $, and $ \displaystyle{\tv{\unuh(t,\cdot)}{0<x<L} }$ remains sufficiently small for all $ t\in (0,T) $. By Helly's Theorem, we can extract a subsequence of $ \{\unuh\} $ which converges to a limit function $\uh=\uh(t,x)$ in $\lnorm 1((0,T)\times(0,L))$ as $ \nu\to +\infty $. Moreover, by Proposition \ref{p:ueh-stab} the whole sequence $ \{\unuh\} $ is a Cauchy sequence and converges to a unique limit as $ \nu\to +\infty $.  In fact, for every $ \nu_1,\nu_2\ge 1 $, it is easy to see that $ u^{\nu_2,h}$ (resp. $ u^{\nu_1,h}$) is also an $(\e_{\nu_1},h)  $-solution (resp. $(\e_{\nu_2},h)  $-solution) if $ \nu_2> \nu_1 $ (resp. if $ \nu_1>\nu_2 $), then by  \eqref{est:ueh-s} we have
\begin{equation}\label{e:ueh-cvg}
\begin{split}
&\|\unuoh(t,\cdot)-\unuth(t,\cdot)\|_{\lnorm 1(0,L)}\\
\leq & C(T)\bigg(\|\unuoh(0,\cdot)-\unuth(0,\cdot)\|_{\lnorm 1(0,L)} + \int^{t}_{0} \Big(\big|b_1(\unuoh(s,0+))-b_1(\unuth(s,0+))\big|\\
& \qquad \qquad + \big|b_2(\unuoh(s,L-))-b_2(\unuth(s,L-))\big|\Big) ds +\max\{\e_{\nu_1},\e_{\nu_2}\} \bigg).
\end{split}
\end{equation}
From \eqref{d:ueh-ai}-\eqref{d:ueh-ab}, we know that the right-hand side of \eqref{e:ueh-cvg} approaches to zero as $ \nu_1,\nu_2\to + \infty $.

One can prove that, by using the same proof of Theorem 3 in \cite{Amadori-BV-BL}, the limit $ \uh=\uh(t,x) $ satisfies the system
\begin{equation*}
\pt H(u)+\px F(u)=\sum_{j\in\mz\cap (0,T/h)} G(u)\delta_0(x-jh)h
\end{equation*}
on $ \D_T $ in the sense of distributions.  Indeed, we have the following

\begin{proposition}
	\label{p:uhs}
	For any given $ h>0 $ sufficiently small, let $\{ \uehn \}$ be a sequence of $ \eh $-solutions to problem \eqref{e:gs}-\eqref{e:ibc} with $ \enu\to 0 $ as $ \nu\to +\infty $. Then $ \uehn $ converges as $ \nu\to +\infty $ to a unique limit $ \uh $ in $ \L^1_{loc}(\D_T) $, which satisfies for any given $ \phi\in C^1_c(\D_T) $,
	\begin{equation}\label{e:uh}	
	\int^{T}_0\int^L_0\left[H(u)\phi_t+F(u)\phi_x\right]+\int^T_0 \bigg[\sum_{j\in\Z\cap (0,L/h)} \phi(t,jh)
	G(u(t,jh-))h \bigg]dt=0.
	\end{equation}  
\end{proposition}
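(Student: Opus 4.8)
The plan is to separate the two assertions: first the convergence of the sequence $\{\uehn\}$ to a unique limit, and then the identification of the equation satisfied by that limit.

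\emph{Convergence.} By Proposition \ref{p:ueh-ex} the family $\{\uehn\}$ enjoys, uniformly in $\nu$, the total variation bound \eqref{bd:ueh-tvx} in $x$ and the $\lnorm 1$--Lipschitz continuity \eqref{bd:ueh-L1t} in $t$, so Helly's theorem yields a subsequence converging in $\lnorm 1((0,T)\times(0,L))$ to some $\uh$. To upgrade this to convergence of the whole sequence I would invoke the stability estimate \eqref{est:ueh-s} of Proposition \ref{p:ueh-stab}: since any $\e_{\nu_2}$-solution with $\e_{\nu_2}\le\e_{\nu_1}$ is a fortiori an $\e_{\nu_1}$-solution, applying \eqref{est:ueh-s} to the pair $(\unuoh,\unuth)$ as in \eqref{e:ueh-cvg} bounds $\|\unuoh-\unuth\|_{\lnorm 1}$ by the initial/boundary mismatches plus $\max\{\e_{\nu_1},\e_{\nu_2}\}$, all of which vanish as $\nu_1,\nu_2\to+\infty$. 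Hence $\{\uehn\}$ is Cauchy and the limit $\uh$ is unique and independent of the subsequence.

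\emph{The limiting equation.} Fix $\phi\in C^1_c(\D_T)$. For fixed $\nu$ the function $\uehn$ is piecewise constant, so integrating by parts cell by cell reduces $\int^T_0\int^L_0[H(\uehn)\phi_t+F(\uehn)\phi_x]$ to a sum of line integrals over the fronts of $\uehn$, with no boundary terms because $\phi$ is compactly supported in the open domain $\D_T$. Along a front $x=x_\alpha(t)$ with states $\ul,\ur$ and speed $\dot x_\alpha$ the contribution equals $-\int\phi\big[(F(\ur)-F(\ul))-\dot x_\alpha\,(H(\ur)-H(\ul))\big]\,dt$. For a physical front the speed $\dot x_\alpha$ matches the Rankine--Hugoniot (resp. characteristic) speed up to $O(\enu)$ while $|H(\ur)-H(\ul)|=O(|\sigma_\alpha|)$, so its defect is $O(\enu|\sigma_\alpha|)$; summed against the uniformly bounded total variation from Proposition \ref{p:ueh-ex} these give $O(\enu)$, and the non-physical fronts, of total strength $\le\enu$, also contribute $O(\enu)$. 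For a zero-wave on $x=jh$ one has $\dot x_\alpha\equiv0$ and, by the defining relation $\ur=\phu{\ul}$ of the $h$-Riemann solver, $F(\ur)-F(\ul)=hG(\ul)$ \emph{exactly}. Collecting all contributions yields
\begin{equation*}
\int^T_0\int^L_0[H(\uehn)\phi_t+F(\uehn)\phi_x]\,dx\,dt+\int^T_0\sum_{j\in\Z\cap(0,L/h)}\phi(t,jh)\,G(\uehn(t,jh-))\,h\,dt=O(\enu).
\end{equation*}

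\emph{Passage to the limit and the main obstacle.} Letting $\nu\to+\infty$, the double integral converges to $\int^T_0\int^L_0[H(\uh)\phi_t+F(\uh)\phi_x]$ by the $\lnorm 1$-convergence of $\uehn$ and the continuity of $H,F$. The delicate point, which I expect to be the main obstacle, is the convergence of the finite sum: it requires the one-sided traces $\uehn(\cdot,jh-)$ to converge to $\uh(\cdot,jh-)$ in $\lnorm 1(0,T)$ at each line $x=jh$, which does not follow from bulk $\lnorm 1$-convergence alone. I would exploit the uniform $\lnorm 1$-Lipschitz continuity in $x$ \eqref{bd:ueh-L1x} of $\uehn$ on each strip $Z'_j$: for $x\in(jh-\eta,jh)$ the trace $\uehn(\cdot,jh-)$ differs from $\uehn(\cdot,x)$ by $O(\eta)$ in $\lnorm 1$ uniformly in $\nu$, so averaging in $x$ over $(jh-\eta,jh)$, passing to the limit in $\nu$ (allowed by the bulk convergence and the Lipschitz continuity of $G$), and then sending $\eta\to0$ gives $\int^T_0\phi(t,jh)G(\uehn(t,jh-))\,dt\to\int^T_0\phi(t,jh)G(\uh(t,jh-))\,dt$. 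Since the number of lines is finite for fixed $h$, summation and limit commute and the error $O(\enu)\to0$, which establishes \eqref{e:uh}. This follows the scheme of the proof of Theorem~3 in \cite{Amadori-BV-BL}, adapted to the present mixed initial--boundary value setting.
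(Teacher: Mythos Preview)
Your proof is correct and follows the same approach as the paper. The paper establishes the convergence exactly as you do (Helly plus the Cauchy argument via Proposition~\ref{p:ueh-stab}, spelled out in the paragraph preceding the proposition), and for the identification of the limiting equation it simply cites Theorem~3 of \cite{Amadori-BV-BL}; your integration-by-parts on the piecewise constant $\uehn$, separating physical/non-physical fronts (error $O(\enu)$) from zero-waves (exact contribution $hG(\ul)$ via $F(\phu{\ul})-F(\ul)=hG(\ul)$), together with the trace argument using \eqref{bd:ueh-L1x} on each strip $Z'_j$, is precisely how that reference proceeds.
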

\noindent Moreover, by \eqref{h:ib-app} and \eqref{bd:ueh-L1t}-\eqref{bd:ueh-L1x}, we can conclude that $ \uh $ also satisfies the initial-boundary conditions \eqref{e:ibc}.

\vspace{6pt}

Since the estimates \eqref{bd:ueh-tvx} and \eqref{bd:ueh-L1t} are independent of $ \eh $, the corresponding estimates still uniformly hold for the sequence of $\{\uh\}$. By choosing a sequence $ \{h_i\} $ with $ h_i\to 0 $ as $ i\to  +\infty $, we can apply Helly's compactness theorem to $\{\uhi\} $ to extract a convergent subsequence (still denoted by $ \uhi $) converging to some function $ u\in \L^1(\D_T) $. By a standard argument (see Theorem 4 of \cite{Amadori-BV-BL}), we can prove that $ u=u(t,x) $ is indeed an entropy solution to problem \eqref{e:gs}-\eqref{e:ibc}. In fact, we have
\begin{proposition}
	\label{p:uhex}
	Let $ \{u^{h_i}\} $ be a subsequence of solutions to the mixed initial-boundary value problem \eqref{e:uh} and \eqref{e:ibc} with uniformly small total variation, converging in $ \L^1(\D_T)$ to some function $ u=u(t,x) $ as $ i\to +\infty $. Then $ u $ is an entropy solution to problem \eqref{e:gs}-\eqref{e:ibc}.
\end{proposition}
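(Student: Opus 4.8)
The plan is to verify, for the limit $u$, the three defining properties of an entropy solution in Definition~\ref{d:es} --- that $u$ is a weak solution, that it is entropy-admissible, and that it attains the prescribed initial and boundary data --- by passing to the limit in the approximate statements available for the constructing sequences, following the scheme of Theorem~4 in \cite{Amadori-BV-BL}. Throughout I use the uniform total variation bound assumed in the statement together with the $\L^1$-continuity estimates \eqref{bd:ueh-L1t}--\eqref{bd:ueh-L1x}, which pass from the $\eh$-solutions to $\uh$ and then to $u$.

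First I would establish the weak formulation by letting $h_i\to 0$ in \eqref{e:uh}. Since $\uhi\to u$ in $\L^1(\D_T)$ and $H,F,G$ are Lipschitz on $\brz$, the first integral in \eqref{e:uh} converges to $\int^T_0\int^L_0[H(u)\phi_t+F(u)\phi_x]$. The only point needing care is the source sum, which for each fixed $t$ I rewrite as a Riemann sum,
\[
\sum_{j}\phi(t,jh_i)G(\uhi(t,jh_i-))h_i=\sum_j\int^{jh_i}_{(j-1)h_i}\phi(t,x)G(\uhi(t,x))\,dx+E_i(t),
\]
where, using $\phi\in C^1_c$ and the Lipschitz continuity of $G$, the error obeys $|E_i(t)|\le C h_i\big(1+\tv{\uhi(t,\cdot)}{0<x<L}\big)$. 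The uniform total variation bound then gives $E_i\to 0$ in $\L^1(0,T)$, while $\int^T_0\int^L_0\phi\,G(\uhi)\to\int^T_0\int^L_0\phi\,G(u)$; hence the source sum converges to $\int^T_0\int^L_0\phi\,G(u)$ and \eqref{e:uh} becomes exactly property~(1) of Definition~\ref{d:es}.

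Entropy-admissibility is the heart of the matter, and here I would go back to the $\eh$-solutions $\unuh$. Fix an entropy--entropy flux pair $(\eta,\zeta)$ from (H4) and a nonnegative $\phi\in C^1_c(\D_T)$; the distribution $\pt\eta(\unuh)+\px\zeta(\unuh)$ is a measure supported on the fronts, which I split into four parts. On shocks and contact discontinuities the Lax admissibility built into the approximate Riemann solver gives this measure the correct (non-positive) sign. On rarefaction fronts (each of strength $\le\e$) and on non-physical fronts (total strength $\le\e$) the entropy production is bounded after integration in $t$ by $C\e$, exactly as in the homogeneous estimates recalled in Section~\ref{ss:es}, and so vanishes as $\enu\to 0$. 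The genuinely new contribution is carried by the zero waves along the lines $x=jh$: being stationary, they contribute only through the singular part of $\px\zeta$, and a direct expansion of $u^+=\phu{u^-}=F^{-1}[F(u^-)+G(u^-)h]$ combined with the compatibility relation $D\eta\,(DH)^{-1}DF=D\zeta$ shows that $\zeta(u^+)-\zeta(u^-)$ matches, to leading order in $h$, the source contribution of the entropy inequality in Definition~\ref{d:es}(2) multiplied by $h$, with an $O(h^2)$ remainder. Letting $\enu\to 0$ yields the entropy inequality for $\uh$ with the zero-wave sum $\int^T_0\sum_j\big(\zeta(u^+)-\zeta(u^-)\big)\phi(t,jh)\,dt$ in place of the source integral; letting then $h_i\to 0$ and repeating the Riemann-sum argument of the first step, this sum converges to the source integral while the $O(h)$ remainder disappears, giving property~(2).

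The initial and boundary conditions are recovered from the uniform continuity estimates. Estimate \eqref{bd:ueh-L1t} gives $u$ a well-defined $\L^1$ trace at $t=0$, and combined with $\|\unuh(0,\cdot)-\iu\|_{\L^1(0,L)}\le\enu$ it forces $u(0,\cdot)=\iu$; similarly \eqref{bd:ueh-L1x} makes $x\mapsto u(\cdot,x)$ equicontinuous in $\L^1(0,T)$, so the inner traces $u(\cdot,0+)$, $u(\cdot,L-)$ are stable under the limit and \eqref{d:ueh-ab} forces the boundary relations $b_1(u(\cdot,0+))=g_1$ and $b_2(u(\cdot,L-))=g_2$. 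I expect the entropy inequality to be the main obstacle, and within it the correct bookkeeping of the two successive limits: one must verify that the rarefaction- and non-physical-front errors vanish uniformly in $h$ as $\enu\to 0$, and that the surviving zero-wave sum then converges under $h_i\to 0$ to precisely the source term prescribed by the entropy--flux relation (H4). By contrast, the weak-form limit and the recovery of the initial/boundary data are routine consequences of $\L^1$-convergence and the uniform estimates of Proposition~\ref{p:ueh-ex}.
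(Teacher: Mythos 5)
Your proposal is correct and takes essentially the same route as the paper: the paper establishes this proposition simply by invoking the ``standard argument'' of Theorem 4 in \cite{Amadori-BV-BL}, which is precisely what you reconstruct --- the Riemann-sum passage to the limit in the weak form \eqref{e:uh}, the front-by-front accounting of entropy production (with the zero waves supplying, via $\zeta(u^+)-\zeta(u^-)=h\,D\eta\,(DH)^{-1}G+O(h^2)$, the entropy source in the limit $h_i\to 0$), and the recovery of the initial and boundary traces from the uniform $\L^1$-continuity estimates of Proposition \ref{p:ueh-ex}. Two harmless remarks: your expansion produces the source $D\eta\,(DH)^{-1}G$, which is the form consistent with (H4) (the factor $(DH)^{-1}$ is evidently missing in the paper's Definition \ref{d:es}(2)), and the entropy production along shock fronts is nonpositive only up to the $O(\e)$ front-speed errors, which are absorbed by the same $C\e$ bound you already invoke for rarefaction and non-physical fronts.
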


We need to show that this convergence is independent of choice of  subsequences. Roughly speaking, this can be done by the following steps:
\begin{enumerate}[(1)]
	\item Recall that the notations \eqref{d:lt}-\eqref{d:rt} of trapezoid domains $\mfl$ and $ \mfr $, and write 
\begin{equation}\label{d:mflr}
		 \mflt=\mflt(L) ,\quad \mfr_t=\mfr_t(0).
\end{equation} 
	Let $ \tu =u\cdot\chi_{\mfl} $ be the solution restricted on the domain $ \mfl $, where $ \chi_{\mfl} $ is the characteristic function of $ \mfl $. We can show that $ \tu $ satisfies some additional so called ``viscosity solution" properties (c.f. \cite{Bressan2000}) on the time interval $ (0,\hatt) $. On the other hand, if $ \hat u=\hat u(t,\cdot) $ is a Lipschitz continuous map with values in $ \L^{1}((0,L);\rn n) $ satisfying $\hat u(t,x)\equiv 0 $ on $ (0,L)\setminus \mflt $ and these ``viscosity solution" properties, then $ \hat u $ coincides with $ \tu $ on the time interval $ (0,\hatt) $. This implies that, restricted on $ \mfl $, all convergent subsequences of $ \{\uhi\}$ converge to a unique limit in $ \L^1(\mfl) $.  
	\item Similarly, it can be proved that, restricted on $ \mfr $, all convergent subsequences of $ \{\uhi\}$ converge to the same limit in $ \L^1 (\mfr)$. Therefore, $ \uhi$ converges to the same limit in $ \L^1(\D_{\hatt}) $, since $ \D_{\hatt}\subseteq \mfl\cup \mfr  $.
	\item By a similar argument, we can prove that $ \uhi$ converges to a unique limit  in $ \L^1((j\hatt,(j+1)\hatt)\cap \D_T)\times (0,L) )$ for all $ j\in \Z\cap[1,T/\hatt] $. This implies that $ \uhi$ converges to the same limit $ u $ in $ \L^1(\D_T) $.
\end{enumerate}

In what follows, we only prove Step (1), and 
Steps (2)-(3) can be proved in an analogous way. Since all the technical proofs are quite similar to those in \cite{Amadori-BV-BL}, in what follows, we give the statement of the main results, and the sketch of the proofs can be found in Appendix.

For any $ t\in [0,\hatt) $, we denote by $ {\md}^*_t $ the set of pair of functions $ (u,g) $ with $ u:(0,L)\to \rn{n} $ and $ g:(t,\hatt)\to \rn{n-m}, $ such that $ u(x)\equiv 0 $ on $ (0,L)\setminus \mflt $, then by Proposition \ref{p:ueh-stab} we have the following
\begin{proposition}
	\label{p:uniq}
	There exist a family of sets $ \{\md_t \}_{t\in (0,\hatt)} $ with $ \md_t  \subseteq {\md}^*_t$ and a process 
	\[
	P(t,t_0): \md_{t_0}\to \md_{t+t_0}\quad \text{for all }\ t_0,t,t+t_0\in (0,\hatt)
	\] 
	with the following properties:
	\begin{enumerate}[(i)]
		\item for all $ U\in \md_{t_0} $, $ P(0,t_0)U=U$, $ P(t,t_0+s)\circ P(s,t_0)U=P(t+s,t_0)U $;
		\item if $ U=(u,g)\in \md_{t_0} $, then $ P(t,t_0)U=\big(E(t,t_0)U,\T(t,t_0) g\big) $, where $\T(t,t_0)g$ restricts $ g $ on $ (t+t_0,\hatt) $, % is the time-translation operator: $ \T(t,t_0)g(s)=g(s+t-t_0) $, 
		and the operator $ E(t,t_0) $ is defined on $ \md_{t_0} $, such that for any $ t_0,t_0'\in [0,\hatt), \ t\in [0,\hatt_1-t_0], \ t'\in [0,\hatt-t_0'] $ and for each $ U'=(u',g')\in \md_{t'} $ and $ U''=(u'',g'')\in \md_{t''} $, we have 
		\[
		\begin{split}
		\llnorm{E(t',t'_0)U'-E(t'',t''_0)U''} \le &C\bigg(\llnorm{u'-u''}+|t'-t''|+|t'_0-t''_0|\\
		&+\int_{t_0}^{t_0+t}\|g'(\tau)-g''(\tau)\|d\tau\bigg);
		\end{split}
		\]
		\item There exists a sequence of $ \{  P^{h_i} \}$, such that $ P^{h_i}(t,t_0) U $ converges to $ P(t,t_0)U $ as $ i \to +\infty $ for any given $ U\in \md_{t_0} $ with respect to the distance $d(P^{h_i}(t,t_0)U,P(t,t_0)U)  $, where
		\[
		d(U',U'')=\|\iu'-\iu''\|_{\L^1(0,L)}+\|g'-g''\|_{L^1(t,\hatt)}
		\]
		for any $ U'=(\iu',g'),\ U''=(\iu'',g'')\in\md_t $;
		\item for all $\bar{U} \in \md_0 $, the function $ u(t,\cdot)=E(t,0)\bar{U} $ is an entropy solution of system \eqref{e:gs}.     
	\end{enumerate}
\end{proposition}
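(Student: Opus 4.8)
The plan is to realize the process $P$ as the $h_i\to 0$ limit of approximate processes $P^{h_i}$ built from the limit-of-$\eh$ solutions $\uhi$ of Proposition~\ref{p:uhs}, restricted to the nested family of left domains $\mflt$ (the triangular region of Fig.~\ref{f:lt}). The structural fact that makes a genuine \emph{process}, rather than merely a global solution operator, well defined is the one-sided domain of dependence encoded in $\mflt$: by the stability estimate \eqref{est:ueh-sl} of Proposition~\ref{p:ueh-stab}, the value of any $\eh$-solution on $\mfl_t(x_1)$ is controlled by its initial trace on $(0,x_1)$ and by the left boundary value $b_1$ alone, with no dependence on the right boundary. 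First I would fix the data class, taking $\md_t\subseteq{\md}^*_t$ to consist of those pairs $(u,g)$ with $u\in BV$, $u\equiv 0$ on $(0,L)\setminus\mflt$, and total variation small enough for Proposition~\ref{p:ueh-ex} to apply.

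Next I would define the approximate process. Given $U=(u,g)\in\md_{t_0}$, I construct $\eh$-solutions whose initial profile is $u$ placed at time $t_0$ and whose left boundary datum is recovered from $g$; by Proposition~\ref{p:uhs} these converge as $\enu\to 0$ to a unique limit $\uhi$, and I set
\[
P^{h_i}(t,t_0)U=\big(E^{h_i}(t,t_0)U,\ \T(t,t_0)g\big),
\]
where $\T(t,t_0)g$ restricts $g$ to $(t+t_0,\hatt)$ and $E^{h_i}(t,t_0)U=\uhi(t_0+t,\cdot)\,\chi_{\mfl_{t_0+t}}$. Property~(ii) then follows by passing $\enu\to 0$ in \eqref{est:ueh-sl}: the $\e t$ defect disappears in the limit, and combining the resulting bound with the time-continuity estimate \eqref{bd:ueh-L1t} yields the Lipschitz dependence of $E^{h_i}$ on the initial datum, the boundary datum, and the parameters $t,t_0$, with constant $C$ \emph{independent of} $h_i$. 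The semigroup identity~(i) holds for each $P^{h_i}$ because $\uhi$ is a deterministic forward-in-time evolution and the slices $\mflt$ are nested in $t$; the domain-of-dependence property guarantees that evolving $\md_{t_0}\to\md_{t_0+s}\to\md_{t_0+s+t}$ agrees with evolving $\md_{t_0}\to\md_{t_0+s+t}$.

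To pass to the limit $h_i\to 0$ I would invoke the uniqueness argument of Step~(1). The uniform total variation and $\L^1$-Lipschitz bounds let Helly's theorem extract convergent subsequences of $\{\uhi\}$ on $\mfl$; each limit $\tu=u\,\chi_{\mfl}$ is shown to satisfy the viscosity-solution characterization (cf.~\cite{Bressan2000})---local comparison with the linearized system at points of approximate continuity and with the Riemann solution at points of approximate jump---with defect vanishing as $h_i\to 0$. A uniqueness theorem, namely that any $\L^1$-Lipschitz map vanishing outside $\mflt$ and satisfying these properties must coincide with $\tu$, then forces every convergent subsequence to the same limit, so the whole sequence converges and defines $E(t,t_0)$ and $P(t,t_0)$; this is~(iii), while (i)--(ii) survive the limit by lower semicontinuity of the $\L^1$ norm. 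Finally (iv) is Proposition~\ref{p:uhex} applied to the restricted limit, so $E(t,0)\bar U$ is an entropy solution of \eqref{e:gs}.

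The main obstacle is the viscosity-solution uniqueness at the heart of Step~(1). One must establish the two local comparison estimates and run a Gronwall-type integration of the error over $(0,\hatt)$ in the presence of the concentrated source terms---the zero-waves sitting at $x=jh$---and of the left boundary. Controlling the contribution of these stationary discontinuities to the local defect uniformly as $h_i\to 0$ is the delicate point, and it is precisely here that the linear degeneracy of the negative families and the uniform bounds of Proposition~\ref{p:ueh-ex} are used.
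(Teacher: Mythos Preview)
Your outline captures the right overall architecture---build approximate processes $P^{h_i}$ from the $\uhi$ and pass to the limit---but it diverges from the paper's actual proof in two places, and one of these creates a circularity.

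First, the construction of the domain $\md_t$. You take $\md_t$ to be simply the pairs $(u,g)$ with small total variation and $u\equiv 0$ outside $\mflt$. The paper instead builds, for each fixed $h$, a domain $\md_t^h(\delta)$ as the $\L^1$-closure of piecewise-constant pairs with $\Upsilon_h^L(u,g)<\delta$, where $\Upsilon_h^L$ is a Glimm functional that counts the waves produced by the $h$-Riemann solvers at the lattice points $\{jh\}$ together with the homogeneous Riemann problems at the remaining jumps. The point is \emph{positive invariance}: since $\Upsilon_h^L$ is nonincreasing along $\eh$-trajectories, $P^h$ genuinely maps $\md_{t_0}^h(\delta)$ into $\md_{t_0+t}^h(\delta)$. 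A bare BV-ball does not obviously enjoy this invariance uniformly in $h$. The paper then forms intersections $\hat\md_t=\bigcap_h\md_t^h(\delta')$ (and a larger $\check\md_t$ with $\delta''>\delta'$), and finally \emph{defines} $\md_t$ as the set of limits $\lim_k P(t_k,0)U_k$ with $U_k\in\hat\md_0$ and $t_k\downarrow t$. This orbit-closure construction is exactly what makes $P(t,t_0):\md_{t_0}\to\md_{t_0+t}$ well defined and gives the semigroup identity~(i).

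Second, the convergence of $P^{h_i}$. You invoke the viscosity-solution characterization (local comparison with Riemann and linearized problems) to force all subsequential limits to coincide. But that characterization is precisely Proposition~\ref{p:vs}, which in the paper is stated and proved \emph{after} Proposition~\ref{p:uniq} and explicitly refers to the operator $E$ constructed there; using it inside the proof of~\ref{p:uniq} is circular. The paper's proof of~\ref{p:uniq} does not use the viscosity characterization: property~(iii) asserts only the existence of \emph{some} sequence $h_i\to 0$ along which $P^{h_i}$ converges, and this follows from the uniform-in-$h$ Lipschitz bound on $E^h$ (your property~(c)) together with a diagonal extraction. The viscosity characterization~\ref{p:vs} is then used \emph{afterwards}, in tandem with~\ref{p:uniq}, to show that every convergent subsequence of $\{\uhi\}$ has the same limit.

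A minor point: linear degeneracy of the negative families plays no role in Proposition~\ref{p:uniq}; it enters only through Lemma~\ref{l:uhe-cv} when relating forward $\he$-solutions to rightward $\eh$-solutions.
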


In order to use Proposition \ref{p:uniq} to prove the uniqueness of the limit of any sequence of $ \{\uh\} $, we need to introduce some notations: for any given function $ u=u(t,x) $ and any given point $ (\tau,\xi) \in \D_T $, if $ \xi>0 $, we define $ U^{\sharp}_{(u;\tau,\xi)} $ to be the solution to the Riemann initial value problem
\[
\begin{cases}
H(\omega)_t+F(\omega)_x=0,\\
\omega(\tau,x)=
\begin{cases}
u(\tau,\xi-), & \text{if}\ x<\xi,\\
u(\tau,\xi+), & \text{if}\ x>\xi
\end{cases}
\end{cases}
\]
for all $ t>\tau $; while, if $ \xi=0 $, we define $ U^{\sharp}_{u;\tau,\xi} $ to be the solution to the Riemann initial-boundary value problem
\[
\begin{cases}
H(\omega)_t+F(\omega)_x=0,\\
\omega(\tau,x)=u(\tau,\xi+), & \text{if}\ x>0,\\
b(\omega(t,0))=g(\tau+), & \text{if}\ t>\tau
\end{cases}
\]
for all $ t>\tau $.

Let $ U^{\flat}_{(u;\tau,\xi)} $ be the solution to the Cauchy problem of the linear hyperbolic system with constant coefficients
\[
\begin{cases}
\tilde{A} w_t+\tilde{B}w_x=\tilde{G},\\
w(\tau,x)=u(\tau,x)
\end{cases}
\]
for $ t>\tau $, where $ \tilde{A}=DH(u(\tau,\xi)) $, $ \tilde{B}=DF(u(\tau,\xi)) $ and $ \tilde{G}=G(u(\tau,\xi)) $.

\begin{proposition}
	\label{p:vs}
	Let $ \hat{\lambda} >0$ be an upper bound of all characteristic speeds. Then every trajectory $ u(t,\cdot)=E(t,0)\bar{U}, \bar{U}=(\iu,g)\in \md_0 $, satisfies the following conditions at each $ \tau\in (0,T) $: 
	\begin{enumerate}[(1)]
		\item If $ \xi>0 $, then we have
		\begin{equation}\label{e:urs-iv}
		\lim_{\theta \to 0}\frac{1}{\theta}\int\limits_{\xi-\theta \hat{\lambda}}^{\xi+\theta\hat{\lambda}}\left|u(\tau+\theta,x)-U^{\sharp}_{u;\tau,\xi}(\theta,x)\right|dx=0,
		\end{equation}
		and for every $ \rho >0$ sufficiently small and $ \theta $ with $ 0<\theta<\rho/\hat{\lambda} $, we have
		\begin{equation}\label{e:ules-in}
		\begin{split}
		\frac{1}{\theta}\int_{\xi-\rho+\theta\hat{\lambda}}^{\xi+\rho-\theta\hat{\lambda}}&\left|u(\tau+\theta,x)-U^{\flat}_{u;\tau,\xi}(\tau+\theta,x)\right|dx\\
		&\le C\Big[ \tv{u(\tau,\cdot)}{\xi-\rho<x<\xi+\rho}+2\rho\Big].
		\end{split}
		\end{equation}
		\item If $ \xi=0 $, then we have
		\begin{equation}\label{e:urs-bv}
		\lim_{\theta \to 0}\frac{1}{\theta}\int\limits_{0}^{\theta\hat{\lambda}}\left|u(\tau+\theta,x)-U^{\sharp}_{u;\tau,\xi}(\theta,x)\right|dx=0.
		\end{equation}
	\end{enumerate}
	
	Vice versa, let $ u=u(t,x):[0,\hatt]\to \L^1((0,L);\rn{n})  $ be a Lipschitz continuous map such that $ u(0)=\iu $, $ u(t,x)\equiv 0 $ on $ \D_{\hatt}\setminus \mfl $ and $ b_1(u)=g $ on $ x=0 $ for some given $ (\iu,g)\in \md_0 $.  
	%$ (u(t),\T(t,0)g)\in \md_t $. 
	Assume that conditions \eqref{e:urs-iv}-\eqref{e:urs-bv} hold, then $ u $ coincides with a trajectory of E, that is, for all $ t\in (0,\hatt) $ we have
	\begin{equation*}
	u(t,\cdot)=E(t,0)(\iu,g).
	\end{equation*}
\end{proposition}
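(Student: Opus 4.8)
The plan is to establish the two directions separately, following the viscosity-solution characterization of process trajectories in the sense of \cite{Bressan2000}, adapted to the present setting as in \cite{Amadori-BV-BL}. The forward direction asserts that the limit $ u(t,\cdot)=E(t,0)\bar U $ of $ \eh $-solutions satisfies the local Riemann compatibilities \eqref{e:urs-iv}, \eqref{e:urs-bv} and the integral linearization bound \eqref{e:ules-in}; the converse asserts that these conditions pin down the trajectory uniquely. Both directions rely on the uniform $ \L^1 $-stability of Proposition \ref{p:ueh-stab} and on the finite propagation speed $ \hat\lambda $.

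For the forward direction I would fix an interior point $ (\tau,\xi) $ with $ \xi>0 $ and exploit finite speed of propagation: for $ \theta $ small the values of $ u(\tau+\theta,\cdot) $ on $ (\xi-\theta\hat\lambda,\xi+\theta\hat\lambda) $ are determined by the restriction of $ u(\tau,\cdot) $ to this shrinking interval. Since $ u(\tau,\cdot)\in BV $, its one-sided limits $ u(\tau,\xi\pm) $ exist, so on the infinitesimal scale the Cauchy data converge to the Riemann data $ [u(\tau,\xi-),u(\tau,\xi+)] $, whose homogeneous evolution is exactly $ U^\sharp_{u;\tau,\xi} $; the bounded source $ G $ contributes only an $ O(\theta) $ error, so \eqref{e:urs-iv} follows as $ \theta\to 0 $. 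Property \eqref{e:urs-bv} is obtained the same way at $ \xi=0 $ using the boundary Riemann problem and the boundary-data bound in Proposition \ref{p:ueh-stab}. For \eqref{e:ules-in} I would compare $ u $ on $ (\xi-\rho,\xi+\rho) $ with the frozen-coefficient evolution $ U^\flat_{u;\tau,\xi} $: the discrepancy is produced by wave interactions and by the genuine variation of the coefficients, each controlled by $ \tv{u(\tau,\cdot)}{\xi-\rho<x<\xi+\rho} $, while the source contributes a term proportional to the length $ 2\rho $; summing these gives the stated right-hand side.

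The converse is the substantive part. Here I would invoke the Lipschitz-process integral inequality: for the Lipschitz map $ u $ and the process $ E $,
\[
\llnorm{u(\hatt,\cdot)-E(\hatt,0)\bar U}\le C\int_0^{\hatt}\liminf_{\theta\to 0+}\frac{\llnorm{u(\tau+\theta,\cdot)-E(\theta,\tau)u(\tau,\cdot)}}{\theta}\,d\tau ,
\]
so it suffices to show the integrand vanishes for a.e. $ \tau\in(0,\hatt) $. Fixing such a $ \tau $, I would partition $ (0,L)\cap\mflt $ into finitely many small neighborhoods of the large jumps of $ u(\tau,\cdot) $ together with the complementary intervals on which the oscillation of $ u(\tau,\cdot) $ is small (possible since $ u(\tau,\cdot)\in BV $ has only finitely many jumps above any threshold). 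On each jump neighborhood, \eqref{e:urs-iv} (or \eqref{e:urs-bv} at $ x=0 $) shows that both $ u(\tau+\theta,\cdot) $ and the trajectory $ E(\theta,\tau)u(\tau,\cdot) $ agree with the same Riemann solution $ U^\sharp $ up to $ o(\theta) $; on each small-oscillation interval, \eqref{e:ules-in} together with the already-established forward bound shows that both functions agree with the same linear evolution $ U^\flat $ up to $ C\theta $ times the total variation of $ u(\tau,\cdot) $ on that interval. Adding the contributions, the error per unit $ \theta $ is bounded by $ C $ times the sum of the total variations over the small-oscillation intervals, plus a term that is $ o(1) $ as the partition is refined; since this sum can be made arbitrarily small, the integrand vanishes and $ u(t,\cdot)=E(t,0)(\iu,g) $.

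The main obstacle is precisely this last step in the converse: covering the spatial domain by jump neighborhoods and small-oscillation intervals and showing that the accumulated first-order error vanishes when the two local comparisons \eqref{e:urs-iv} and \eqref{e:ules-in} are summed. The delicate points are ensuring that finitely many ``large'' jumps suffice (using $ BV $ regularity to make the residual oscillation as small as desired) and handling the non-autonomous source term and the lateral boundary $ x=0 $ uniformly; these follow from the interaction and stability estimates already recorded in \eqref{eq:2-1}--\eqref{eq:1} and Proposition \ref{p:ueh-stab}, in the spirit of \cite{Amadori-BV-BL}.
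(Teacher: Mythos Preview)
Your proposal is correct and follows essentially the same approach the paper invokes: the paper does not give a detailed argument but simply cites Theorem~8.1 of \cite{Amadori-BV-BL} for the interior conditions \eqref{e:urs-iv}--\eqref{e:ules-in} and Theorem~5.2 of \cite{Amadori_viscosity} for the boundary condition \eqref{e:urs-bv}, and your outline is precisely the standard viscosity-characterization machinery carried out in those references. The one remark the paper adds that you might make explicit is that the zero-waves, having zero speed, never reach the boundary $x=0$, which is why the boundary estimate \eqref{e:urs-bv} reduces to the conservation-law case treated in \cite{Amadori_viscosity} without further modification.
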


The proof can be obtained by combining the corresponding results in \cite{Amadori_viscosity} and \cite{Amadori-BV-BL}. In fact, Theorem 8.1 in \cite{Amadori-BV-BL} concerns \eqref{e:urs-iv}-\eqref{e:ules-in} for the Cauchy problem of balance laws \eqref{e:gs}. On the other hand, since the zero-waves can not give any impact on the boundary, the proof of Theorem 5.2 in \cite{Amadori_viscosity} concerning \eqref{e:urs-bv}, for the initial-boundary value problem of conservation laws \eqref{e:hcl}, can be applied here with a little modification.

Propositions \ref{p:uniq}-\ref{p:vs} imply that the convergence of a sequence $ \{u^{h_i}\} $ stated in Proposition \ref{p:uhex} is independent of the choice of subsequences. Then, together with Propositions \ref{p:ueh-ex}-\ref{p:ueh-stab} and Proposition \ref{p:uhs}, we have
\begin{theorem}
	\label{t:lueh}
	For any fixed $T>0$, there exist positive constants $\delta$ and $ C(T) $ such that for every initial-boundary data $(\iu,g^u_1,g^u_2)$ with
	\begin{equation*}
	\Lambda(\iu,g^u_1,g^u_2) +\gamma L\le \delta,
	\end{equation*}
	problem \eqref{e:gs}-\eqref{e:ibc} associated with the initial-boundary data $ (\iu,g^u_1,g^u_2) $ admits a solution $ u=u(t,x) $ on the domain $ \D_T $ as the limit of $\eh$-solutions given by Proposition \ref{p:ueh-ex}, which satisfies
	\begin{equation*}
	\label{bd:lueh-tv}
	\begin{split}
	\tv{u(t,\cdot)}{0<x<L}+\gamma L \leq  C(T) \Lambda(\iu,g^u_1,g^u_2), \quad \forall t\in (0,T),\\
	\|u(t,\cdot)-u(s,\cdot)\|_{\lnorm 1(0,L)}\leq C(T) |t-s|,\quad \forall t,s \in (0,T),
	\end{split}
	\end{equation*}
	and $u(t,x)\in \brz$ for a.e. $(t,x)\in \D_T  $.
	
	Moreover, suppose that $ v=v(t,x) $ is a solution as the limit of $\eh$-solutions to problem \eqref{e:gs}-\eqref{e:ibc} associated with the initial-boundary data $(\iv,g^v_1,g^v_2)$ with $ \Lambda(\iv,g^v_1,g^v_2)+\gamma h\le \delta $ given by Proposition \ref{p:ueh-ex}. Then, for any given $ x_0\in [0,L) $ and $ x_1\in (0,L] $, there exist a positive constant $ C $ independent of $ x_0 $ and $ x_1 $, such that
	\begin{equation*}
	\label{est:lueh-lstab}
	\begin{split}
	&\|u(t,\cdot)-v(t,\cdot)\|_{\lnorm 1(\mflt(x_1))} \\
	\leq &C \left(\|\iu-\iv\|_{\lnorm 1(0,x_1)} +  \int^{t}_0 |g^u_1(s)-g^v_1(s))|ds \right),\quad\forall t\in [0,\hat{\tau}_1(x_1)],
	\end{split}
	\end{equation*}
	\begin{equation*}
	\label{est:lueh-rstab}
	\begin{split}
	&\|u(t,\cdot)-v(t,\cdot)\|_{\lnorm 1(\mathfrak{R}_t(x_0))}\\
	\leq & C \left(\|\iu(0,\cdot)-\iv(0,\cdot)\|_{\lnorm 1(x_0,L)} + \int^{t}_0 |g^u_2(s)-g^v_2(s)|ds \right),\quad \forall t\in[0,\hat\tau_2(x_0)],
	\end{split}
	\end{equation*}
	and there exists a positive constant $ C(T) $ depending on $ T $, such that
	\begin{equation*}
	\label{e:stability}
	\begin{split}
	&\|u(t,\cdot)-v(t,\cdot)\|_{\lnorm 1(0,L)}\\
	\leq & C(T)  \bigg(\|\iu-\iv\|_{\lnorm 1(0,L)} + \sum_{i=1,2}\int^{t}_{0} \big|g^u_i(s)-g^v_i(s)\big|ds \bigg),\quad \forall t\in (0,T).
	\end{split}
	\end{equation*}
\end{theorem}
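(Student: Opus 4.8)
The plan is to assemble Theorem~\ref{t:lueh} from the discrete-level results of Propositions~\ref{p:ueh-ex}--\ref{p:ueh-stab} by carrying them through the two-stage limit $\e\to 0$ (with $h$ fixed) followed by $h\to 0$, exactly the construction prepared in the discussion preceding the statement. First I would fix the data $(\iu,g^u_1,g^u_2)$ with $\Lambda(\iu,g^u_1,g^u_2)+\gamma L\le\delta$ and, for each pair $\eh$, invoke Proposition~\ref{p:ueh-ex} to obtain an $\eh$-solution $\ueh$ together with the uniform bounds \eqref{bd:ueh-tvx}--\eqref{bd:ueh-L1x}. Since these bounds are independent of $\eh$, Helly's theorem applies at each stage: for fixed $h$ the stability estimate \eqref{est:ueh-s} makes $\{\unuh\}$ Cauchy (see \eqref{e:ueh-cvg}), so it converges to a unique limit $\uh$ satisfying the weak formulation \eqref{e:uh} (Proposition~\ref{p:uhs}); then along a sequence $h_i\to 0$ a further application of Helly's theorem together with Proposition~\ref{p:uhex} produces a limit $u=u(t,x)$ that is an entropy solution of \eqref{e:gs}--\eqref{e:ibc}.

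The estimates in the first block of the statement are obtained by passing the corresponding bounds of Proposition~\ref{p:ueh-ex} to the limit. For the total-variation bound I would use the lower semicontinuity of $\tvv$ under $\L^1$ convergence, giving $\tv{u(t,\cdot)}{0<x<L}+\gamma L\le\liminf\big(\tv{\ueh(t,\cdot)}{0<x<L}+\gamma L\big)\le C(T)\big(\Lambda(\iu,g^u_1,g^u_2)+\gamma L\big)$, and likewise in the $t$-variable, which yields the stated total-variation estimates; the Lipschitz-in-time bound passes directly because the constant in \eqref{bd:ueh-L1t} is independent of $\eh$ and, unlike \eqref{bd:uhe-L1conti-t} for $\he$-solutions, carries no $Ch$ defect. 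That the limit is independent of the chosen subsequences is exactly the content of Propositions~\ref{p:uniq}--\ref{p:vs}: the restriction of $u$ to the trapezoid $\mfl$ is characterized as the unique Lipschitz map vanishing outside $\mflt$ and satisfying the viscosity-type conditions \eqref{e:urs-iv}--\eqref{e:urs-bv}; the analogous statement on $\mfr$, together with $\D_{\hatt}\subseteq\mfl\cup\mfr$ and a step-by-step induction in $t$, forces a single limit on all of $\D_T$.

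For the stability estimates I would compare the two limit solutions $u$ and $v$ through matched approximating pairs $\ueh,\veh$ from Proposition~\ref{p:ueh-ex}. Applying \eqref{est:ueh-sl} to $\ueh,\veh$, I bound its right-hand side from above by using the approximate initial and boundary conditions \eqref{d:ueh-ai}--\eqref{d:ueh-ab} and the triangle inequality: the initial term becomes $\|\iu-\iv\|_{\L^1(0,x_1)}+O(\e)$ and the boundary term becomes $\int_0^t|g^u_1(s)-g^v_1(s)|\,ds+O(\e)$, while the defect $\e t$ is itself $O(\e)$. Because the triangular domain $\mflt(x_1)$ is fixed, the left-hand side converges in $\L^1$ to $\|\uh(t,\cdot)-v^h(t,\cdot)\|_{\L^1(\mflt(x_1))}$ as $\e\to 0$ with $h$ fixed; sending $\e\to 0$ and then $h\to 0$, for which the right-hand side is independent, yields the claimed estimate for $u,v$. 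The estimate on $\mfr_t(x_0)$ follows identically from \eqref{est:ueh-sr}, and the global one on $(0,L)$ from \eqref{est:ueh-s}.

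The genuine difficulty lies not in any single limit passage but in guaranteeing that the double limit is well defined, that is, independent of the subsequences extracted by Helly's theorem; this is precisely what Propositions~\ref{p:uniq}--\ref{p:vs} secure, and I would treat them as a black box here. Among the routine steps, the most delicate point is the treatment of the boundary-trace terms: since the boundary conditions hold only up to $\e$ in $\L^1$ by \eqref{d:ueh-ab}, one must absorb this defect into an $O(\e)$ error and replace $b_i(\ueh)$ by the prescribed data $g_i$ before sending $h\to 0$, so that the order of the two limits matches the order in which the approximations were built.
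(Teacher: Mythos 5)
Your proposal is correct and takes essentially the same route as the paper: a two-stage limit ($\e\to 0$ at fixed $h$, made unambiguous by the Cauchy property \eqref{e:ueh-cvg} coming from \eqref{est:ueh-s} and Proposition \ref{p:uhs}; then $h_i\to 0$ via Helly and Proposition \ref{p:uhex}), with independence of subsequences delegated to Propositions \ref{p:uniq}--\ref{p:vs} and the stability estimates obtained by passing Proposition \ref{p:ueh-stab} to the limit after absorbing the $O(\e)$ initial/boundary defects from \eqref{d:ueh-aib}. The only discrepancy is immaterial: passing \eqref{bd:ueh-tvx} to the limit by lower semicontinuity yields $C(T)\big(\Lambda(\iu,g^u_1,g^u_2)+\gamma L\big)$ on the right-hand side, as you write, rather than the formally stronger bound displayed in the theorem, which is how the paper's own estimates read as well.
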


\subsection{Relation between $ \he $-solutions and $ \eh $-solutions}
\label{ss:rbhesehs}

In \cite{Li-Yu_OC}, under the assumption that all negative eigenvalues are linearly degenerate, we proved the equivalence between two $ \e $-solutions to the forward problem and to the corresponding rightward problem for system \eqref{e:hcl}, respectively. In fact, we have the following

%Using the fact that all negative eigenvalue are linearly degenerate, we can prove, in the same way as in \cite{Li-Yu_OC}, that the equivalence between the solution to the forward problem and the solution to the corresponding rightward problem. In fact, by checking the $ \e $-solution to the forward problem satisfying Definition \ref{d:es-h} in the rightward sense (see Appendix in \cite{Li-Yu_OC} for the proof), we can obtain the following
\begin{lemma}\label{l:ebes}
	Let system \eqref{e:hcl} satisfy assumptions (H1)-(H4). Suppose that all the negative eigenvalues are linearly degenerate. Suppose furthermore that $ \ue $ is an $ \e $-solution to system \eqref{e:hcl} in the forward sense. Then, if we exchange the role of $ t $ and $ x $, namely, regard $ x $ as the ``time" variable and $ t $ as the ``space" variable, $ \ue $ is also an $ \e $-solution to system \eqref{e:hcl} in the rightward sense, i.e. an $ \e $-solution to system
	\begin{equation}\label{e:hcl-r}
	\px F(u)+ \pt H(u)=0.
	\end{equation}
	And vice versa.
\end{lemma}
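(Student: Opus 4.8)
The plan is to verify, front by front, that the piecewise-constant function $\ue$ satisfies every clause of Definition \ref{d:es-h} once $x$ is regarded as the evolution (``time'') variable and $t$ as the spatial one. The geometric input is elementary: a front carried by the straight line $x=x_\alpha(t)=st+c$ becomes, after exchanging $t$ and $x$, the line $t=(x-c)/s$ of rightward speed $1/s$. By (H2), i.e.\ \eqref{h:nonzero-char}, every physical front has a speed that for $\e$ small is bounded away from zero (it approximates an eigenvalue $\lambda_{k_\alpha}$ with $|\lambda_{k_\alpha}|>c$), and every non-physical front has constant speed $\hat\lambda\in(0,c)$; hence every $s$ is finite and nonzero, so $1/s$ is again a finite nonzero slope. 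The geometry of the fronts and of their crossing points is untouched by the exchange, so $\ue$ stays piecewise constant with finitely many straight fronts and finitely many interactions: clause (1) of Definition \ref{d:es-h} holds automatically, and only the jump relations, the speed/admissibility conditions and the non-physical strength bound must be re-examined.

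The heart of the matter is a transformation rule for a single front. The Rankine--Hugoniot relation $F(\ur)-F(\ul)=s(H(\ur)-H(\ul))$ of a physical front, divided by $s\neq 0$, becomes $H(\ur)-H(\ul)=s^{-1}(F(\ur)-F(\ul))$, which is exactly the Rankine--Hugoniot relation for the rightward system \eqref{e:hcl-r} with speed $1/s$. Moreover the eigenvalues of \eqref{e:hcl-r} are the reciprocals $1/\lambda_i$ of those of \eqref{e:hcl} while the eigenvectors $r_i$ are unchanged, so a characteristic family is genuinely nonlinear (resp.\ linearly degenerate) for \eqref{e:hcl-r} precisely when it is for \eqref{e:hcl}. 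The one subtle bookkeeping point is that a direct check shows the labels $\ul$ and $\ur$ are interchanged across a front of positive speed and preserved across a front of negative speed when $t$ is taken as the spatial variable. For a \emph{linearly degenerate} family the front is a contact discontinuity: its shock and rarefaction curves coincide and it carries no entropy inequality, so it is reversible, and after the transformation it is again an admissible contact of \eqref{e:hcl-r} whose speed is exactly $\lambda_{k_\alpha}(\ul,\ur)^{-1}$, matching the contact-speed requirement of Definition \ref{d:es-h}.

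It remains to treat the \emph{genuinely nonlinear} families, which under the hypothesis are necessarily positive (all negative ones being linearly degenerate), and here the relabelling of states interacts with the reciprocation of the Lax inequalities. For a shock one has $\lambda_{k_\alpha}(\ur)<s<\lambda_{k_\alpha}(\ul)$ with all three quantities positive; reciprocation reverses these inequalities, and once combined with the interchange $\ul\leftrightarrow\ur$ forced by $s>0$ it reproduces exactly the Lax shock inequalities for \eqref{e:hcl-r}, so a forward shock becomes a rightward shock. For a rarefaction front of amplitude $0<\sigma_\alpha\le\e$ the forward speed obeys $|s-\lambda_{k_\alpha}(\ur)|\le C\e$; because the reciprocal map is Lipschitz on the range of the nonvanishing eigenvalues and $|\lambda_{k_\alpha}(\ur)-\lambda_{k_\alpha}(\ul)|=\sigma_\alpha\le\e$, the rightward speed $1/s$ differs from the rightward eigenvalue at the relabelled right state by at most $C\e$, so the rarefaction-speed condition holds and the rightward amplitude is again $O(\e)$. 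Finally the non-physical strength $\sum_{\alpha\in\NP}|\ue(t,x_\alpha+)-\ue(t,x_\alpha-)|$ is a sum of state jumps, invariant under the exchange of $t$ and $x$, so the bound $\le\e$ persists, while the constant reciprocal speed of these artificial fronts is admissible for the rightward scheme.

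The step I expect to be the main obstacle is precisely this admissibility bookkeeping for genuinely nonlinear fronts: reciprocation of the speed reverses the order of the characteristic speeds, and it is only the simultaneous interchange of $\ul$ and $\ur$ --- valid for one sign of the speed --- that restores the correct Lax inequalities. This is exactly why the hypothesis that all negative characteristics are linearly degenerate cannot be dropped: a genuinely nonlinear negative family would produce forward shocks whose reciprocal-speed image violates the rightward Lax condition, appearing instead as a rarefaction of non-small amplitude and thus failing Definition \ref{d:es-h}, whereas linear degeneracy turns such fronts into reversible contacts for which no orientation issue arises. The converse (``and vice versa'') follows by the same argument, since $s\mapsto 1/s$ together with the state interchange is involutive and the hypothesis is symmetric between the two formulations --- the negative families of \eqref{e:hcl-r} are once more the linearly degenerate ones.
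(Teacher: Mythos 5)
First, a point of reference: this paper never proves Lemma \ref{l:ebes}. It is imported from the authors' earlier work --- the sentence preceding it reads ``In \cite{Li-Yu_OC}, under the assumption that all negative eigenvalues are linearly degenerate, we proved the equivalence\dots'' --- and no argument for it appears in the body or in the appendix. So you can only be compared with the expected clause-by-clause verification of Definition \ref{d:es-h}, which is indeed what you carry out, and your core mechanism is the right one: speeds transform by $s\mapsto 1/s$ (nonzero thanks to (H2) and the gap placement of non-physical speeds); the rightward eigenvalues are $1/\lambda_i$ with unchanged eigenvectors, so genuine nonlinearity and linear degeneracy are preserved family by family; the Rankine--Hugoniot relation divides through by $s$; the labels $\ul,\ur$ interchange across positive-speed fronts and persist across negative-speed ones; and for positive genuinely nonlinear families the reciprocation of the Lax inequalities composed with this relabelling reproduces them exactly, while for negative families only linear degeneracy (reversibility of contacts) rescues admissibility. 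Your diagnosis of how a genuinely nonlinear negative family would break the lemma is also the correct one.

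The one genuinely defective step is your one-line treatment of the non-physical fronts. (a) In Definition \ref{d:es-h} they travel with speed $\hat\lambda\in(0,c)$, inside the spectral gap; after exchanging $t$ and $x$ their speed becomes $1/\hat\lambda>1/c$, which exceeds \emph{every} rightward characteristic speed, since all rightward eigenvalues $1/\lambda_i$ lie in $(-1/c,1/c)$. Hence clause (3) of Definition \ref{d:es-h}, read verbatim for system \eqref{e:hcl-r} with its own gap constant from \eqref{h:nonzero-char}, is violated by any solution containing a non-physical front. The lemma is only meaningful because the rightward notion of $\e$-solution adopts the complementary convention (non-physical fronts faster than all characteristics, as arranged in \cite{Li-Yu_OC}); a proof must state this asymmetry rather than assert that the reciprocal speed ``is admissible''. (b) The strength bound is not ``invariant under the exchange'': the forward clause bounds the sum over non-physical fronts crossing each horizontal slice $t=\mathrm{const}$, whereas the rightward clause requires the analogous bound on each vertical slice $x=\mathrm{const}$, and these sums run over different collections of fronts --- a non-physical front that exits through $x=L$ before another one is created never coexists with it on any horizontal slice, yet both cross every vertical slice to the right of their creation points. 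The vertical bound therefore does not follow from the forward definition alone; it follows from the construction, which keeps the total strength of \emph{all} non-physical fronts ever generated below $\e$. Finally, a smaller point of the same flavour: the transformed rarefaction amplitudes and speed errors are only $O(\e)$, so what you literally obtain is a $C\e$-solution in the rightward sense; this is harmless for the limit $\e\to 0$ but the rescaling should be acknowledged rather than left implicit.
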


Using Lemma \ref{l:ebes} and comparing Definition \ref{d:he-es} and Definition \ref{d:eh-es}, we can easily  prove the following lemma.
\begin{lemma}
	\label{l:uhe-cv}
	Let system \eqref{e:gs} satisfy assumptions (H1)-(H4). Suppose that $\uhe(t,x)$ is a $\he$-solution to system \eqref{e:gs} in the forward sense on the domain $\D_T$. Then, if we exchange the role of $ t $ and $ x $, $ \uhe $ is also an $\eh$-solution to system \eqref{e:gs} in the rightward sense, that is, it is also an $ \eh $-solution to system
	\begin{equation}
	\label{e:rs}
	\px F(u)+ \pt H(u)=G(u)
	\end{equation}
	on the domain $ \D_T $. The analogous results also hold vice versa.
\end{lemma}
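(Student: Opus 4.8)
The plan is to verify, clause by clause, that once the roles of $ t $ and $ x $ are exchanged a $ \he $-solution $ \uhe $ of the forward system \eqref{e:gs} meets every requirement in Definition \ref{d:eh-es} of an $ \eh $-solution of the rightward system \eqref{e:rs}, the only substantial input being the homogeneous equivalence already established in Lemma \ref{l:ebes}. First I would spell out what Definition \ref{d:eh-es} demands for \eqref{e:rs} after the exchange $ t\leftrightarrow x $, that is, regarding $ x $ as the evolution variable and $ t $ as the spatial variable: such a solution is piecewise constant along straight fronts, on each slab $ \{(j-1)h<t<jh,\ 0<x<L\} $ it is an $ \e $-solution of the rightward homogeneous system \eqref{e:hcl-r}, and across each segment $ \{t=jh,\ 0<x<L\} $ --- a line of zero rightward speed, hence a legitimate zero-wave --- the two traces $ \ul $ (at smaller $ t $) and $ \ur $ (at larger $ t $) are joined by the source step attached to the flux in the new spatial direction $ t $. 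The piecewise-constant, straight-front property is invariant under the exchange and is immediate; the initial data of the forward problem together with the two lateral conditions likewise transpose into the corresponding approximate data of the rightward problem.

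For the slab condition I would use Lemma \ref{l:ebes} strip by strip. The slabs $ \{(j-1)h<t<jh\} $ required above coincide, up to relabeling, with the time-strips $ Z_j $ on which, by Definition \ref{d:he-es}, $ \uhe $ is an $ \e $-solution of the forward homogeneous system \eqref{e:hcl}. Viewing each such strip as its own domain and invoking Lemma \ref{l:ebes} --- which is permitted because all negative characteristics are assumed linearly degenerate --- shows that, after exchanging $ t $ and $ x $, $ \uhe $ restricted to that strip is an $ \e $-solution of \eqref{e:hcl-r} in the rightward sense: the front families and amplitudes, the shock/contact/rarefaction classification, and the non-physical fronts (whose forward speed $ \hat\lambda $ becomes the finite nonzero rightward speed $ 1/\hat\lambda $) are all preserved. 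Since the mesh lines $ t=jh $ of $ \uhe $ are exactly the slab boundaries of the exchanged Definition \ref{d:eh-es}, the two partitions agree.

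It remains to match the zero-wave relation, which is the crux. Along $ \mmz_j=\{t=jh\} $ the $ \he $-solution satisfies $ \ur=H^{-1}[H(\ul)+hG(\ul)] $ by Definition \ref{d:he-es}, and I must check this is precisely the step prescribed by the exchanged Definition \ref{d:eh-es}. The decisive observation is that the $ \eh $-source operator is built from the flux in the splitting (spatial) direction: for the forward system that direction is $ x $ and the flux is $ F $, giving the zero-wave $ \ur=\phu{\ul}=F^{-1}[F(\ul)+hG(\ul)] $; for the rightward system \eqref{e:rs} the splitting direction is $ t $ and the associated flux is $ H $, so the zero-wave relation becomes $ \ur=H^{-1}[H(\ul)+hG(\ul)] $ --- identical to the $ \he $ time-splitting update. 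Hence $ \uhe $ fulfils all three clauses and is an $ \eh $-solution of \eqref{e:rs}; the converse follows identically, reading Lemma \ref{l:ebes} in the opposite direction and interchanging the operators $ \phu{\cdot} $ and $ H^{-1}[H(\cdot)+hG(\cdot)] $. I expect the main obstacle to be exactly this source-term bookkeeping --- confirming that the exchange $ t\leftrightarrow x $ converts the $ F $-based step $ \phu{\cdot} $ into the $ H $-based step and that the lines $ t=jh $ qualify as zero-waves carrying the matching inter-state relation --- since the entire front-by-front homogeneous part is delegated to Lemma \ref{l:ebes}.
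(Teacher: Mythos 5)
Your proof is correct and takes essentially the same route as the paper's: restrict to the time-strips $Z_j$ and invoke Lemma \ref{l:ebes} there to obtain the rightward homogeneous $\e$-solution property, then observe that the time-splitting update $\ur=H^{-1}[H(\ul)+hG(\ul)]$ along $t=jh$ is precisely the zero-wave relation of Definition \ref{d:eh-es} for the rightward system \eqref{e:rs}, since $H$ is the flux in the new ``spatial'' direction. Your explicit bookkeeping of why the $F$-based step $\Phi_h$ turns into the $H$-based step is exactly the point the paper's proof relies on, so there is no gap.
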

\begin{proof}
	Suppose that $ \uhe $ is a $ \he $-solution to system \eqref{e:gs} in the forward sense. By Definition \ref{d:he-es}, restricted on each set $Z_j:=\{jh<t<(j+1)h,\ 0<x<L\}$ for $ j\in \mz $, $ \uhe $ is an $ \e $-solution to system \eqref{e:hcl}. Then by Lemma \ref{l:ebes}, $ \uhe $ is also an $ \e $-solution to system \eqref{e:hcl-r} in the rightward sense on $ Z_j $, which verifies (2) of Definition \ref{d:eh-es}. Moreover, along each segment $ z_j $, $ \uhe(jh-,x)=H^{-1}[H(\uhe(jh-,x)+hG(\uhe(jh-,x))] $ for $ x\in (0,L) $ except all interaction points, this immediately implies that $ \uhe $ satisfies (3) of Definition \ref{d:eh-es} in the rightward sense. 
\end{proof}

By passing to the limit, we can easily obtain the corresponding equivalent results for the limit solutions. In fact, we can still apply Helly's theorem to $ \he$-solutions in the rightward sense by estimates \eqref{bd:uhe-tvt} and  \eqref{bd:uhe-L1conti-x} and to $ \eh $-solutions in the forward sense by estimates \eqref{bd:ueh-tvt} and \eqref{bd:ueh-L1x}.
\begin{proposition}
	\label{p:frs}
	Under the same assumptions of Lemma \ref{l:uhe-cv}, if $ u $, as the limit of $ \he $-solutions given by Theorem \ref{t:luhe-ex}, is a solution to problem \eqref{e:gs}-\eqref{e:ibc} in the forward sense, then $ u $ is also a solution  as the limit of $\eh$-solutions to system \eqref{e:rs} in the rightward sense. Similar results hold from the solution as the limit of $ \eh $-solutions to system \eqref{e:rs} in the rightward sense to that as the limit of $ \he $-solutions to system \eqref{e:gs} in the forward sense.
\end{proposition}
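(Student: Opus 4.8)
The plan is to lift the finite-level equivalence of Lemma~\ref{l:uhe-cv} to the limit by passing to the limit in exactly the same two stages used to build both families of solutions---first letting $\e\to 0$ with $h$ fixed, then letting $h\to 0$---and, at each stage, identifying the forward limit with the rightward limit through the uniqueness of the limit in the respective orientation. The key structural point is that both constructions perform the two limits in the same order, so the two limiting procedures can be run in parallel on one and the same sequence of approximate solutions.

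First I would fix $h>0$ and a sequence $\e_\nu\to 0$ and consider the forward $\he$-solutions $u^{\e_\nu,h}$ to system~\eqref{e:gs}. By Theorem~\ref{t:luhe-ex}, after extracting a subsequence they converge in $\L^1((0,T)\times(0,L))$ to the forward limit $u^h$. By Lemma~\ref{l:uhe-cv}, each $u^{\e_\nu,h}$ is at the same time an $\eh$-solution to the rightward system~\eqref{e:rs}, and the uniform bounds \eqref{bd:uhe-tvt} and \eqref{bd:uhe-L1conti-x} furnish precisely the total-variation bound in the rightward ``space'' variable $t$ and the $\L^1$-Lipschitz bound in the rightward ``time'' variable $x$ required to apply Helly's theorem in the rightward orientation. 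Since $\{u^{\e_\nu,h}\}$ is then a family of $\eh$-solutions to~\eqref{e:rs} with uniformly small total variation, Proposition~\ref{p:uhs}, read in the rightward orientation, shows that it converges as $\nu\to+\infty$ to a unique rightward limit, which by definition is the limit of $\eh$-solutions to~\eqref{e:rs}. Because convergence in $\L^1((0,T)\times(0,L))$ is insensitive to which variable is regarded as time, this rightward limit must coincide with $u^h$. Hence, for every fixed $h$, the single function $u^h$ is simultaneously the forward limit of $\he$-solutions to~\eqref{e:gs} and the rightward limit of $\eh$-solutions to~\eqref{e:rs}.

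Next I would let $h=h_i\to 0$. The estimates \eqref{bd:uhe-tvx} and \eqref{bd:uhe-L1conti-t}, being uniform in $\e$ and $h$, pass to the family $\{u^{h_i}\}$, so Helly's theorem produces a subsequence converging in $\L^1(\D_T)$ to some $u$. For the forward problem, Propositions~\ref{p:uniq}--\ref{p:vs} guarantee that this limit does not depend on the chosen subsequence; the same uniqueness argument, with the roles of $t$ and $x$ interchanged, applies to the rightward family, so the rightward limit is likewise subsequence-independent and is the solution furnished by the rightward analogue of Theorem~\ref{t:lueh}. As $u^{h_i}$ is literally the same function in both orientations and converges in $\L^1(\D_T)$ to $u$, the forward and rightward limits agree, and $u$ is at once the forward limit of $\he$-solutions to~\eqref{e:gs} and the rightward limit of $\eh$-solutions to~\eqref{e:rs}. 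The converse statement follows by the symmetric argument, invoking the ``vice versa'' part of Lemma~\ref{l:uhe-cv} together with the estimates \eqref{bd:ueh-tvt} and \eqref{bd:ueh-L1x} to run Helly's theorem on $\eh$-solutions in the forward orientation.

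The main obstacle I anticipate is not a single estimate but the careful matching of the two limiting procedures: one must verify that the reinterpreted family of forward $\he$-solutions genuinely satisfies the hypotheses under which the rightward $\eh$-limit is known to be unique (Propositions~\ref{p:uhs} and \ref{p:ueh-stab}, read in the rightward sense), so that the uniqueness of the $\eh$-limit can be used to force the two $\L^1$-limits to coincide. Once the uniform total-variation and $\L^1$-Lipschitz bounds have been checked in the rightward orientation, the remainder is the bookkeeping of the subsequence extractions, which is harmless precisely because both the forward and the rightward limits are subsequence-independent.
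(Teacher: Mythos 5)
Your proposal is correct and follows essentially the same route as the paper: identify each forward $\he$-solution with a rightward $\eh$-solution via Lemma \ref{l:uhe-cv}, then apply Helly's theorem in the transposed orientation using precisely the estimates \eqref{bd:uhe-tvt}, \eqref{bd:uhe-L1conti-x} (resp. \eqref{bd:ueh-tvt}, \eqref{bd:ueh-L1x}) and pass to the limit, the two $\L^1(\D_T)$-limits coinciding because they are limits of the same sequence of functions. The paper compresses all of this into a single sentence before the proposition, so your write-up merely supplies the bookkeeping (two-stage limit, subsequence-independence via Propositions \ref{p:uhs}, \ref{p:ueh-stab}, \ref{p:uniq}--\ref{p:vs}) that the authors leave implicit.
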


%\begin{remark}
%Since the system \eqref{e:rs} does not possess a convex entropy in general, even if the system \eqref{e:gs} possesses a convex entropy, the solution in the rightward sense of the system \eqref{e:rs} is not necessary an entropy solution, but this property has no incidence on our results.
%\end{remark}
Let $ \displaystyle T^*\geq L\max_{u\in \brz}\frac{1}{|\lambda_m(u)|} $. Suppose that $ u=u(t,x) $ is a solution as the limit of $ \he $-solutions to problem \eqref{e:gs}-\eqref{e:ibc} on $\D_{T^*} =\{0<t<T^*,\ 0<x<L\} $. Then by Proposition \ref{p:ueh-stab}, Proposition \ref{p:frs} and Remark \ref{r:ueh-u}, $ u $ is also the unique solution as the limit of $ \eh $-solutions to system \eqref{e:hcl-r} in the rightward sense on the triangle domain $ \mfl^*:= \{0<t<T^*,\ 0<x<L(T^*-t)/T^*\}  $. Therefore, we have the following
\begin{proposition}
	\label{p:lueh-t}
	Under the same assumptions as in Lemma \ref{l:uhe-cv}, assume that $ u=u(t,x) $ is a solution to problem \eqref{e:gs}-\eqref{e:ibc} in the forward sense on the domain $ \D_{T^*} $, given by Proposition \ref{p:uhe-ex}. Then on the triangular domain $ \mfl^* $, $ u $ coincides with the solution $ \tilde u $ as the limit of $\eh$-solutions to system \eqref{e:rs} associated with the initial condition
	\begin{equation*}
	\label{ic:artifical-rightward}
	x=0:\ \tu=u(\cdot,0+)
	\end{equation*}
	and the following boundary condition corresponding to the original initial data $ \iu $:
	\begin{equation*}
	\label{bc:artifical-rightward}
	t=0:\ \tilde b_1(\tu)=\tilde b_1(\iu),
	\end{equation*}
	where $ \tilde b_1  \in \C1(B_r(0)$; $\rn{n-m})$ is an arbitrarily given function satisfying the same assumption \eqref{h:bc} as $ b_1 $.
\end{proposition}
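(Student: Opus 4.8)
The plan is to realize $u$, restricted to $\mfl^*$, as the unique rightward limit of $\eh$-solutions determined by the data on the two sides $x=0$ and $t=0$ of the triangle, and then to conclude by uniqueness. First I would invoke Proposition~\ref{p:frs}: since all negative characteristics are linearly degenerate, the forward limit of $\he$-solutions $u$ to \eqref{e:gs}-\eqref{e:ibc} is, after exchanging the roles of $t$ and $x$, simultaneously a limit of $\eh$-solutions to the rightward balance law \eqref{e:rs} on $\D_{T^*}$. Hence the rightward forms of Theorem~\ref{t:lueh} and Proposition~\ref{p:ueh-stab} apply to $u$, and in particular $u$ carries all the stability estimates available for rightward $\eh$-limit solutions.

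The core step is the determinate-domain property announced in Remark~\ref{r:ueh-u}. In the rightward evolution $x$ plays the role of time and $t$ that of space, and the characteristic speeds become $1/\lambda_i$, negative exactly for the families $i\le m$; the most negative rightward speed is $1/\lambda_m$. The hypotenuse $\{x=L(T^*-t)/T^*\}$ of $\mfl^*$ has $|dt/dx|=T^*/L$, and the hypothesis $T^*\ge L\max_{u\in\brz}|\lambda_m(u)|^{-1}$ gives $T^*/L\ge\max_u|\lambda_m(u)|^{-1}=\max_u|1/\lambda_m(u)|$, so the hypotenuse is steeper than every negative rightward characteristic. Tracing any negative characteristic backward in $x$ from a point of $\mfl^*$ therefore keeps it below the hypotenuse until it reaches the line $x=0$, while the positive characteristics reach either $x=0$ or the side $t=0$; no characteristic enters $\mfl^*$ across the hypotenuse from the far boundary $t=T^*$. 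Quantitatively this is the localization furnished by the rightward transcription of the triangular estimate \eqref{est:ueh-sl}, so that the rightward solution on $\mfl^*$ depends only on the data prescribed on $x=0$ and on $t=0$.

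Next I would set up the rightward one-sided problem for \eqref{e:rs}: rightward initial data $\tu(\cdot,0+)=u(\cdot,0+)$ on $x=0$, and the boundary condition $\tilde b_1(\tu)=\tilde b_1(\iu)$ on $t=0$. At $t=0$ the incoming families in the rightward sense are precisely $m+1,\dots,n$, so the requirement that $\tilde b_1\in\C1(B_r(0);\rn{n-m})$ satisfy the same determinant condition \eqref{h:bc} as $b_1$ makes this problem well posed; the rightward form of Theorem~\ref{t:lueh} then yields a unique limit of $\eh$-solutions $\tu$ on $\mfl^*$, with the accompanying stability estimate encoding its uniqueness. Finally I would verify that $u$, read rightward, solves exactly this problem on $\mfl^*$: its trace on $x=0$ is $u(\cdot,0+)$ by construction, and its trace on $t=0$ is the initial datum $\iu$, whence $\tilde b_1(u(0,\cdot))=\tilde b_1(\iu)$ holds identically for \emph{every} admissible $\tilde b_1$. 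The determinate-domain property together with this uniqueness then forces $u\equiv\tu$ on $\mfl^*$.

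The main obstacle is the rigorous determinate-domain argument: proving that no data on the far boundary $t=T^*$ influences the rightward solution in $\mfl^*$. At the level of $\eh$-solutions this means transcribing \eqref{est:ueh-sl} into the rightward variables and checking that the triangle cut out by the negative-family characteristics is exactly $\mfl^*$, for which the sharp threshold $T^*\ge L\max_u|\lambda_m(u)|^{-1}$ is indispensable. A secondary but conceptually important subtlety is the arbitrariness of $\tilde b_1$: because the full trace $\iu$ on $t=0$ is already known, any admissible boundary operator consistent with $\iu$ selects the same rightward solution, which is precisely the freedom asserted in the statement.
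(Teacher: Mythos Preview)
Your proposal is correct and follows essentially the same route as the paper: the paper's justification (given in the paragraph immediately preceding the proposition) simply invokes Proposition~\ref{p:frs}, Proposition~\ref{p:ueh-stab}, and Remark~\ref{r:ueh-u}, which are exactly the three ingredients you assemble --- the forward/rightward equivalence, the triangular stability estimate transcribed to the rightward variables, and the resulting determinate-domain property on $\mfl^*$. Your discussion of the characteristic geometry and of the arbitrariness of $\tilde b_1$ makes explicit what the paper leaves implicit, but the argument is the same.
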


\section{Local exact one-sided boundary null controllability}
We are now ready to apply the results obtained in the previous sections for semi-global solutions to prove Theorem \ref{t:cl}, namely, to realize the local exact one-sided boundary null controllability for a class of general hyperbolic systems of balance laws
\label{s:oneside}
%\section{further properties of entropy  (approximate) solutions}
%\label{s:properties}
\begin{equation}
\label{e:forward}
\pt H(u)+ \px F(u)=G(u),\qquad t>0,\ 0<x<L
\end{equation}
under the assumption that all the negative characteristic fields are linearly degenerate.

In order to establish Theorem \ref{t:cl}, it suffices to prove the following
\begin{lemma}
	\label{lemma:one-side-bc}
	Under the same assumptions as in Theorem \ref{t:cl}, for any given initial data $\iu$ with $\displaystyle{\tv{\iu}{0<x<L}+|\iu(0+)|}$ sufficiently small, system \eqref{e:forward} together with the boundary condition
	\begin{equation*}
	\label{bc:reduce-oneside-IBVP}
	x=0:\ b_1(u)=0, \qquad t\in (0,T)
	\end{equation*}
	admits a solution $ u=u(t,x) $ on the domain $\{\ 0< t< T,\ 0< x< L\}$ with small $ \displaystyle \tv{u(\cdot,L-)}{0<t<T} +u(0,L-)$, satisfying simultaneously the initial condition \eqref{ic:intro} and the final condition \eqref{e:fc}.
\end{lemma}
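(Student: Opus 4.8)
The plan is to realize the construction by the Li--Rao strategy adapted to entropy solutions: I would build the solution as the common value of a \emph{forward} solution, which carries the initial data \eqref{ic:intro} and the homogeneous boundary condition $b_1(u)=0$, and a \emph{rightward} solution, which is arranged to meet the final condition \eqref{e:fc}; the control is then read off as $g_2:=b_2(u(\cdot,L-))$ from the trace of the glued solution on $x=L$. The bridge between the two viewpoints is the equivalence of forward $\he$-limits and rightward $\eh$-limits proved in Propositions \ref{p:frs} and \ref{p:lueh-t}, which is available precisely because all negative characteristics are linearly degenerate. Write $T_1=L/|\lambda_m(0)|$ and $T_2=L/\lambda_{m+1}(0)$; by \eqref{d:time-cl} we have $T>T_1+T_2$, and this strictly positive time margin is exactly what will let me decouple the initial and final requirements.

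First I would produce the forward piece. Since $\tv{\iu}{0<x<L}+|\iu(0+)|$ and $\gamma$ are small, Theorem \ref{t:luhe-ex} furnishes a forward solution $u$ of \eqref{e:forward} with $u(0,\cdot)=\iu$ and $b_1(u)=0$ on $x=0$, enjoying the uniform small total-variation bounds \eqref{bd:ueh-tvx}--\eqref{bd:ueh-tvt}. By Proposition \ref{p:lueh-t}, on the lower-left triangle $\mfl^*=\{0<t<T,\ 0<x<L(T-t)/T\}$ this forward solution coincides with the rightward solution of \eqref{e:rs} determined by its own trace on $x=0$ together with the condition $\tilde b_1(\tilde u)|_{t=0}=\tilde b_1(\iu)$. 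In this way the initial condition \eqref{ic:intro} and the left boundary condition are built simultaneously into the forward description and into the rightward description on $\mfl^*$, so that both can be propagated to the analysis near the final time.

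Next I would use the time margin together with the one-sided determinacy to fix the final condition in the rightward formulation, which is the device that replaces the (ill-posed for entropy solutions) backward-in-time evolution. The final line $t=T$ lies in the complementary upper region, governed by the data on $x=L$ and on $t=T$; invoking the one-sided determinate-domain estimates of Proposition \ref{p:ueh-stab} and Remark \ref{r:ueh-u} together with the rightward viewpoint, the solution there is the rightward limit controlled by the $x=L$ trace and the $t=T$ values. Because $T-T_1>T_2$, the part of the rightward data responsible for matching $\iu$ near $t=0$ and the part responsible for the final state near $t=T$ influence disjoint determinacy regions; I can therefore choose $g_2$ so that, after the initial data have been transported out of the region feeding $t=T$, the solution is steered to the equilibrium $u\equiv 0$ on a full neighbourhood of $t=T$, giving \eqref{e:fc}. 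Smallness of $\gamma$ ensures the source term does not obstruct reaching the equilibrium, and the stability estimate \eqref{est:ueh-s} together with \eqref{bd:ueh-tvx}--\eqref{bd:ueh-tvt} keeps $\tv{u(\cdot,L-)}{0<t<T}+|u(0,L-)|$ small.

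The hard part will be the compatibility of the forward and rightward descriptions across the interface separating the two regions, and the verification that the spliced function is a bona fide entropy solution in both senses with the prescribed traces. This is exactly where the linear degeneracy of the negative characteristics is indispensable: by Propositions \ref{p:frs} and \ref{p:lueh-t} it makes the forward $\he$-limit and the rightward $\eh$-limit the \emph{same} object, so that the forward piece and the rightward piece agree on their common determinacy region, and imposing $u\equiv 0$ near $t=T$ through the choice of $g_2$ does not corrupt the already-fixed initial and left-boundary data. A secondary, more technical point is to propagate the smallness of the total variation and of $|u(0,L-)|$ through the selection of $g_2$ and the gluing, which I would obtain from the stability estimate \eqref{est:ueh-s} and the uniform bounds of Theorems \ref{t:luhe-ex} and \ref{t:lueh}, valid once $\gamma$ is taken small as in the hypotheses.
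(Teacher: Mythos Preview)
Your proposal has the right high-level picture (Li--Rao constructive method, forward and rightward viewpoints, linear degeneracy as the bridge), but the actual construction is missing, and what you sketch in its place is circular.

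The concrete gap is this: you propose to ``choose $g_2$'' so as to steer the solution to zero near $t=T$, and then speak of ``splicing'' the resulting forward solution with a rightward piece. But the forward solution you invoke from Theorem~\ref{t:luhe-ex} already requires a boundary datum on $x=L$ to be specified; if that datum is $g_2$, then $u$ depends on $g_2$, and yet you also want to read $g_2$ off from $u$. No mechanism is given to close this loop, and there is no gluing lemma available for entropy solutions along a general interface. By contrast, the paper never glues and never chooses $g_2$ directly. It proceeds in three explicit steps: (i) solve a forward problem on $\{0<t<T_1,\ 0<x<L\}$ with an \emph{arbitrary} artificial right datum $g_f$ and extract the left trace $a(t)=u_f(t,0+)$; (ii) define $a(t)=0$ for $T_1<t<T$ and solve a \emph{single} rightward mixed problem on all of $\D_T$ with ``initial'' data $u|_{x=0}=a$ and boundary conditions at $t=0$ coming from $\bar u$ and at $t=T$ equal to zero; (iii) use Proposition~\ref{p:lueh-t} (and the one-sided determinacy of Theorem~\ref{t:lueh}) to show this rightward solution agrees with $u_f$ on the lower triangle (hence $u(0,\cdot)=\bar u$) and is identically zero on the upper triangle (hence $u(T,\cdot)=0$). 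The control $g_2$ is then the trace $b_2(u(\cdot,L-))$ of this already-constructed $u$; nothing is spliced.

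Two smaller points: first, your forward step is underdetermined because you do not supply any datum on $x=L$; the paper fixes an artificial $g_f$ there. Second, your determinacy argument near $t=T$ appeals to data ``on $x=L$ and on $t=T$'', but in the rightward formulation the ``initial'' data live on $x=0$; it is precisely the choice $a\equiv 0$ on $(T_1,T)$ together with the homogeneous boundary condition at $t=T$ that forces $u\equiv 0$ on the upper triangle, and this is what yields \eqref{e:fc} without any control being selected in advance.
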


In fact, let $ u=u(t,x) $ be a solution given by Lemma \ref{lemma:one-side-bc}. Taking the boundary control as
\[
g_2(t):=b_2(u(t,L-)),\quad \forall t\in(0,T),
\]
% provided that $ \tv{u(\cdot,L-)} $ is suitably small.,
we obtain the local exact boundary null controllability desired by Theorem \ref{t:cl}.
%Now we apply a similar strategy used in \cite{Li_controllability-book} and \cite{Li_strong-weak} to prove Theorem \ref{t:oneside-control}.

%\begin{proof}
	{\em Proof of Lemma \ref{lemma:one-side-bc}.} If \eqref{d:time-cl} holds, then for $ r>0 $ sufficiently small, we have
	\begin{equation}
	\label{e:time-control-brz}
	T>L \max_{u\in \brz}\left\{\frac{1}{|\lambda_m(u)|}+\frac{1}{\lambda_{m+1}(u)}\right\}.
	\end{equation}
	Step 1. Let
	\begin{equation}
	\label{d:To}
	T_1:=L\max_{u\in \brz}\frac{1}{|\lambda_m(u)|}.
	\end{equation}
	Choosing an artificial function $g_f$ with $\displaystyle\tv{g_f}{0< t< T_1}+|g_f(0+)|$ sufficiently small, we consider the forward problem of system \eqref{e:forward} with the following initial condition and boundary conditions:
	\begin{eqnarray*}
	t=0: && u=\iu,\\
	x=0: && b_1(u)=0,\\
	x=L: && b_2(u)=g_f.
	\end{eqnarray*}
	By Theorem \ref{t:luhe-ex}, there exists a solution $u_f=u_f(t,x)$ obtained as the limit of $\he$-solutions on the domain $D_{T_1}=\{0<t<T_1,0<x<L\}$ with %$\displaystyle\tv{u_f(t,\cdot)}{0< x< L}+\tv{u_f(\cdot,0+)}{0< t< T_1}+\tv{u_f(\cdot,L-)}{0< t< T_1}$ sufficiently small and 
	$u_f(t,x)\in \brz$ for $ (t,x)\in D_{T_1} $ (see Figure \ref{f:f}).
	
	\begin{figure}[H]
		\begin{minipage}[t]{0.42\linewidth} % 如果一行放2个图，用0.5，如果3个图，用0.33
			\centering
			\includegraphics[width=0.5\linewidth]{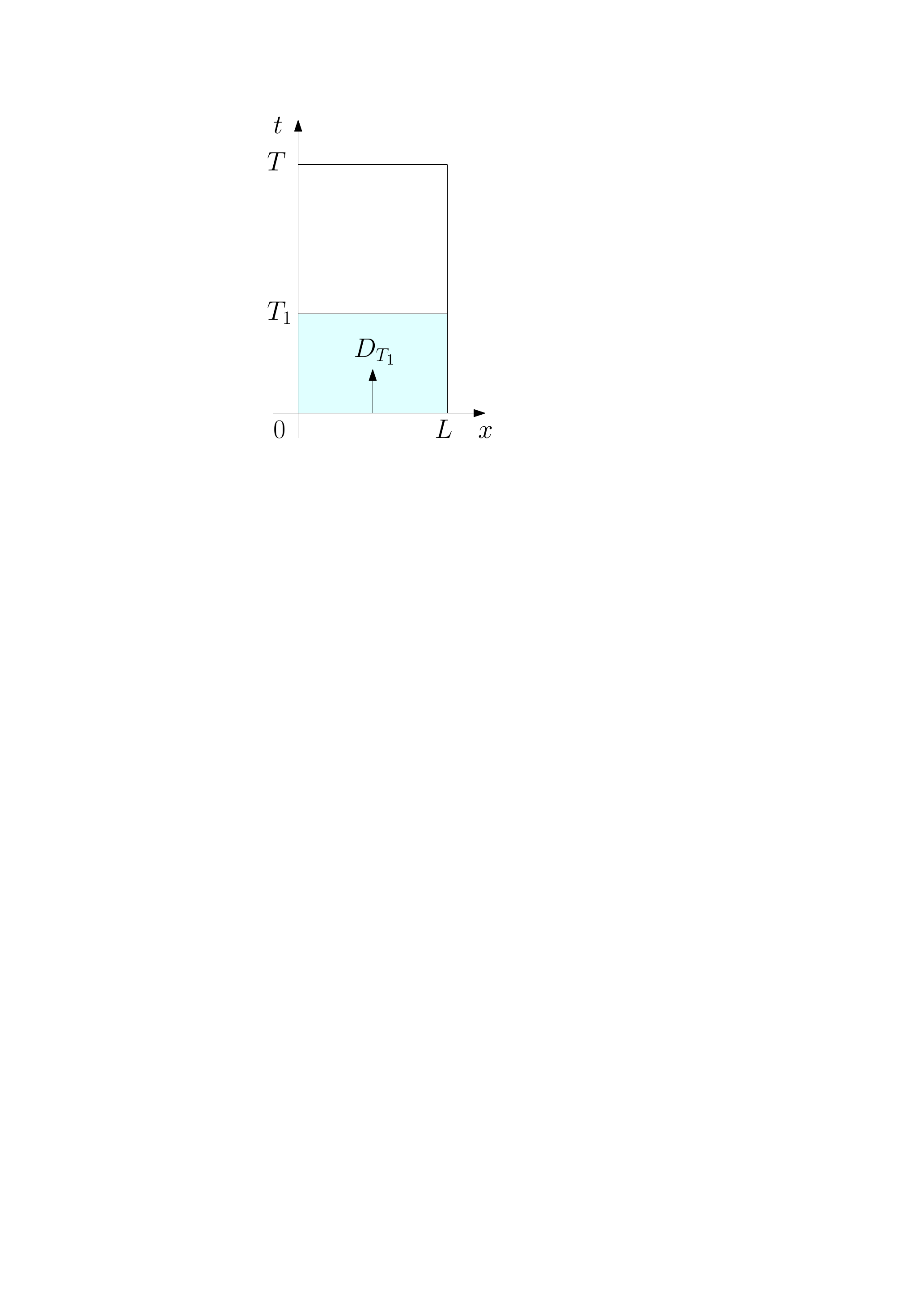}
			\caption{forward solution}
			\label{f:f}
		\end{minipage}%
		\qquad\qquad
		\begin{minipage}[t]{0.42\linewidth}
			\centering
			\includegraphics[width=0.5\linewidth]{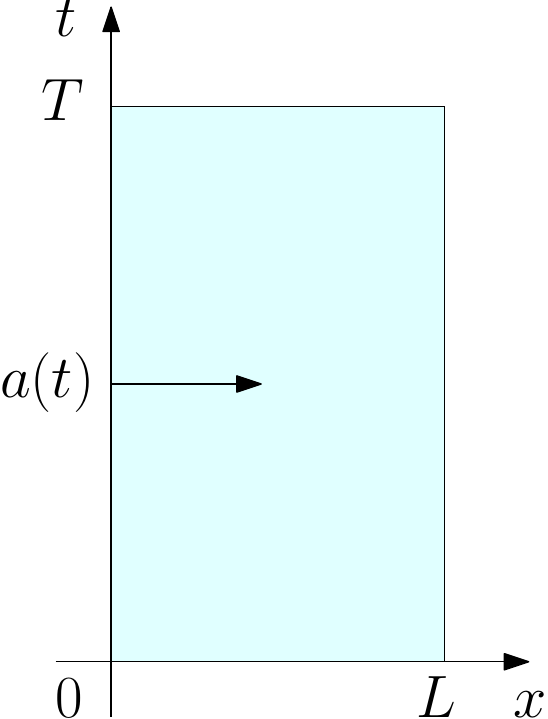}
			\caption{rightward solution}
			\label{f:b}
		\end{minipage}
	\end{figure}
	
	Step 2. Let
	\[
	a(t)=\begin{cases}
	u_f(t,0+) & 0<t<T_1,\\
	0 & T_1<t<T	.	
	\end{cases}
	\]
	Obviously, $ a(t)\in \brz $  with sufficiently small total variation.% and $ u=a(t) $ satisfies the boundary condition \eqref{bc:reduce-oneside-IBVP} at $ x=0 $ on the whole time interval $ (0,T) $.
	
Now we change the role of variables $t$ and $x$ and consider the rightward problem (see Figure \ref{f:b}) for the system
\begin{equation*}
%\label{e:right-system}
\px F(u)+\pt H(u)=G(u), \qquad 0<x<L,\ 0<t<T
\end{equation*}
with the initial condition
\begin{equation*}
%\label{ic:rightward}
x=0:\ u=a(t),\quad 0<t<T
\end{equation*}
and the following boundary conditions reduced from the initial state $u=\iu$ and the final state $u=0$:
\begin{eqnarray}
t=0: && l_s(u) u=l_s(\iu)\iu, \quad s=m+1,...n,\nonumber %\label{bc:artifical-left}
\\
t=T: && l_{r}(u) u=0, \quad r=1,...,m, \label{bc:a-r}
\end{eqnarray}
where $l_i(u)\ (i=1,...,n)$ are the left eigenvectors of $(DH(u)^{-1}DF(u) $, or equivalently, the left eigenvectors of $ (DF(u))^{-1}DH(u) $. A direct computation shows that this boundary condition satisfies the corresponding assumption (H5).
	
By Theorem \ref{t:lueh}, the rightward problem admits a solution $u=u(t,x)$ on the domain $ \D_T $ as the limit of $ \eh $-solutions. By Proposition \ref{p:frs}, the function $ u $ is a solution to system \eqref{e:forward} in the forward sense on $ \D_T $ and it is an entropy solution as far as the system possesses an entropy-entropy flux. Since $ u(t,0)=a(t) $ for a.e. $ t\in (0,T) $, we have
	\[
	b_1(u(t,0+))=0, \qquad \text{a.e.}\ t\in (0,T).
	\]	
	
Step 3. It now remains to show that $u$ verifies the initial condition \eqref{ic:intro} and the final condition \eqref{e:fc}.
	
By Proposition \ref{p:frs}, both $ u_f $ and $ u $ are solutions in the rightward sense. Then by Proposition \ref{p:lueh-t} for the rightward problem, and by \eqref{d:To},  $u_f$ coincides with $ u $ on the triangular domain $\{0\leq  t \le T_1,\  0\leq x \le L(T_1-t)/T_1\}$ (see Figure \ref{f:it}). This implies the initial condition \eqref{ic:intro}.
	
Since $ u(t,0)\equiv 0 $ for $ T_1\leq t\le T$ and $ u $ satisfies \eqref{bc:a-r} for $ 0\le x\le L $, by \eqref{e:time-control-brz}-\eqref{d:To} for the rightward problem, according to Theorem \ref{t:lueh}, we have $ u(t,x)\equiv 0 $ on the triangular domain $\left\{T_1\le t\le T,\ 0\le x\le L\left({t-T_1})/({T-T_1})\right)\right\}$ (see Figure \ref{f:bt}). In particular, we get the final condition \eqref{e:fc}.
	
Thus $ u=u(t,x) $ is a desired solution and the proof of Lemma \ref{lemma:one-side-bc} is complete.
%\end{proof}
\begin{figure}[H]
	\begin{minipage}[t]{0.42\linewidth} % 如果一行放2个图，用0.5，如果3个图，用0.33
		\centering
		\includegraphics[width=0.53\linewidth]{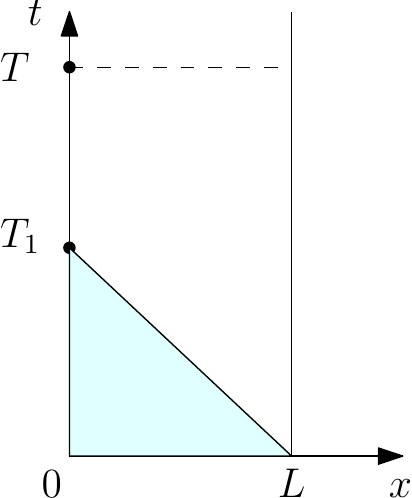}
		\caption{Uniqueness for the initial-boundary value problem with one-sided boundary conditions in the rightward sense}
		\label{f:it}
	\end{minipage}%
	\qquad\qquad
	\begin{minipage}[t]{0.42\linewidth}
		\centering
		\includegraphics[width=0.53\linewidth]{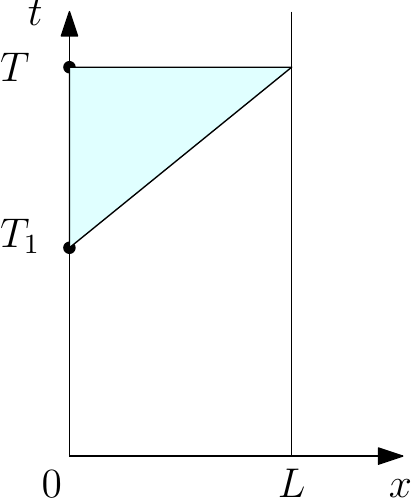}
		\caption{Uniqueness of null solution on the triangle domain}
		\label{f:bt}
	\end{minipage}
\end{figure}

\section{Appendix}
Expecting that the readers are familiar with standard front-tracking algorithm for constructing approximate solutions to hyperbolic systems of conservation laws (for example Chapter 7 of \cite{Bressan2000}), we collect all the technical proofs in this appendix.

\subsection{Proof for Proposition \ref{p:uhe-ex}}
\label{ss:puhe}
Supposing that $ \alpha $ is a front and $ x=\xat $ denotes its location at time $ t $, we recall that the notation $ \sa(t)
$, or just $\sigma_{\alpha}$ for shortness, denotes the amplitude of
front $ \alpha $. Let $ \Gamma_1 $ be the set of time when one of the following situations occurs:
\begin{itemize}
	\item[(a)] two fronts interact with each other inside the domain $\D_T$;
	\item[(b)] one front hits the boundary;
	\item[(c)] the boundary data $\gie\ (i=1\text{ or }2)$ has a jump.
\end{itemize}
Let $ \Gamma_2 $ be the set of time $ \{t= jh:\ j\in \Z\cap(0, T/h]\} $.

We estimate the wave amplitude on the trapezoid domain $ \mfl $ (see \eqref{d:lt}). For this purpose, we define Glimm-type functionals restricted on 
\begin{equation*}
\mflt:=\left\{x\ |\ 0<x<L(\hat \tau-t)/{(2\hat \tau)}\right\}
\end{equation*}
as follows. For all $ t\in (0,\hat{\tau})\setminus(\Gamma_1\cup\Gamma_2) $ we first define
\begin{equation}
\label{def:Vel}
V^{L}(\uhe(t,\cdot)):=\sum_{0<\xa(t)<L(1-t/\hat\tau)}K^L_{\alpha}|\sa(t)|+C_1\tv{g_1^{\e}(s)}{t<s<T},
\end{equation}
in which the sum takes over all fronts cross the segment $ \mflt $, 
\[
K^L_{\alpha}=
\begin{cases}
K, & \text{if }  \mathcal{F}(\alpha)\le m, \\
0, & \text{if }  \mathcal{F}(\alpha)\ge m+1,
\end{cases}
\]
and $ K $ and $ C_1 $ are positive constants to be specified later. Let
\begin{equation*}
\label{def:Qel}
Q^L(\uhe(t,\cdot)):=\sum_{\substack{0<\xa(t),\xb(t)<L(1-t/\hat\tau)\\(\alpha,\beta)\in \mathcal{A}}} |\sa(t)\sb(t)|
\end{equation*}
be the functional measuring the wave interaction potential, where $\mathcal{A}$ is the set of all approaching waves, which may contain non-physical fronts. More precisely, supposing that a front $ \alpha $ is located on the left of a front $ \beta $, we say that $ \alpha,\beta $ are approaching waves if and only if {$ \F(\a)>\F(\b) $, or $ \F(\a)=\F(\b) $ and at least one of $ \a $ and $ \b $ is a shock. Let
	\begin{equation*}
	\label{def:Uel}
	\Upsilon^L(\uhe(t,\cdot)):=V^{L}(\uhe(t,\cdot))+C_2 Q^{L}(\uhe(t,\cdot)),
	\end{equation*}
	where $ C_2 $ is a positive constant to be specified later.
	
	In a similar way, we can do the same thing on the triangular domain $ \mfr $ (see \eqref{d:rt})   .  For this purpose, we define the corresponding Glimm functionals $ V^{R} $, $ Q^{R} $ and $ \Upsilon^{R} $ for $ \uhe(t,\cdot) $ restricted on  $\mathfrak{R}_t:=\left\{x\ |\ Lt/(2\hat\tau)<x<L\right\}$ as follows: 
	\begin{gather}
	V^{R}(\uhe(t,\cdot)):=\sum_{Lt/\hat{\tau}<\xa(t)<L}K^R_{\alpha}|\sa(t)|+C_1\tv{g_1^{\e}(s)}{t<s<T},\label{def:Ver}\\
	Q^R(\uhe(t,\cdot)):=\sum_{\substack{Lt/\hat\tau<\xa(t),\xb(t)<L \\ (\alpha,\beta)\in \mathcal{A}}} |\sa(t)\sb(t)|,\label{def:Qer}\\
	\Upsilon^R(\uhe(t,\cdot)):=V^{R}(\uhe(t,\cdot))+C_2 Q^{R}(\uhe(t,\cdot)),\label{def:Uer}
	\end{gather}
	where 
	\[
	K^R_{\alpha}=
	\begin{cases}
	0, & \text{if } \mathcal{F}(\alpha)\le m, \\
	K, & \text{if } \mathcal{F}(\alpha)\ge m+1,
	\end{cases}
	\]
	and $ K $ and $ C_i\ (i=1,2) $ are positive constants to be specified later.
	
	By \eqref{eq:2-1}-\eqref{eq:1} and a simliar argument used in Section 5.4 of \cite{Li-Yu_OC}, after choosing the constants $ K, C_1 $ and $ C_2 $ properly, it can be proved that 
	\begin{equation}\label{e:uhe-gamma1}
	\Upsilon^L(t+)-\Upsilon^L(t-)<0,\ \Upsilon^R(t+)-\Upsilon^R(t-)<0 \quad \text{for all $ t\in \Gamma_1\cap(0,\hat{\tau})$}.
	\end{equation}
	In fact, setting
	\[
	\Delta V^L(\uhe(t,\cdot)):=V^L(\uhe(t+,\cdot))-V^L(\uhe(t-,\cdot)),
	\]
	and defining $\Delta Q^L(\uhe(t,\cdot))$ and $ \Delta \Upsilon^L(\uhe(t,\cdot))$ in a similar way, we can check (\ref{e:uhe-gamma1}) in the following three cases:
	\begin{enumerate}[(1)]
		\item when two fronts $\alpha$ and $\beta$ interact at time $t=\tau$ inside $\mfl$, by \eqref{eq:2-1}-\eqref{eq:2-2} we have
		\begin{align*}
		&\Delta V^{L}(\uhe(\tau+,\cdot))\le C K|\sigma_{\alpha}\sigma_{\beta}|,\\
		&\Delta Q^L(\uhe(\tau,\cdot))\le -|\sigma_{\alpha}\sigma_{\beta}|+CKV^L(\uhe(\tau-,\cdot))|\sigma_{\alpha}\sigma_{\beta}|\le -\frac{1}{2}|\sigma_{\alpha}\sigma_{\beta}|,
		\end{align*}
		provided that $V^L(\uhe(\tau-,\cdot))$ is sufficiently small. These immediately imply that 
		\begin{equation}
		\label{e:u-1}
		\Delta \Upsilon^L(\uhe(\tau,\cdot))\le -\frac{1}{4}|\sigma_{\alpha}\sigma_{\beta}|,
		\end{equation}
		provided that $C_2\ge 2CK+1/2$.
		\item When a front $\alpha$ hits the left boundary $x=0$, by (\ref{eq:4}) we have
		\begin{align*}
		\Delta V^L(\uhe(\tau,\cdot))\le (C-K)|\sigma_{\alpha}|,\quad \Delta Q^L(\uhe(\tau,\cdot))\le C|\sigma_{\alpha}|V^L(\uhe(\tau-,\cdot)),
		\end{align*}
		then
		\begin{equation}
		\label{e:u-2}
		\Delta \Upsilon^L(\uhe(\tau,\cdot))\le -\frac{1}{2}|\sigma_{\alpha}|,
		\end{equation}
		provided that $K\ge \max\{4C,1\}$ and $V^L(\uhe(\tau,\cdot))$ is sufficiently small. 
		\item If $g^{\e}_1(\tau+)\ne g^{\e}_1(\tau-)$, then by \eqref{eq:1} we have
		\begin{align*}
		&\Delta V^L(\uhe(\tau,\cdot))\le (C-C_1)|\Delta g^{\e}_1(\tau)|,\\
		&\Delta Q^L(\uhe(\tau,\cdot))\le C |\Delta g^{\e}_{1}|V^L(\ueh(\tau-,\cdot))\le \frac{1}{2}|\Delta g^{\e}_1(\tau)|,
		\end{align*}
		provided that $C_1\ge C+1$ and $V^L(\uhe(\tau-,\cdot))$ is sufficiently small. These immediately imply that
		\begin{equation}
		\label{e:u-3}
		\Delta \Upsilon^L(\uhe(\tau,\cdot))\le -\frac{1}{2}|\Delta g^{\e}_1(\tau)|.
		\end{equation}
	\end{enumerate}
	Thus, $\Upsilon^L(\uhe(t,\cdot))$ decreases across time $t=\tau$ for all $\tau\in \Gamma_1$. A result can be obtained for $\Upsilon^R(\uhe(t,\cdot))$ in a similar way.
	
	In order to estimate the jump of $ \Upsilon^L(\ueh(t,\cdot))$ and $\Upsilon^R(\uhe(t,\cdot)) $ at time $t\in \Gamma_2 $, we need the following lemma, which can be obtained by a similar proof as in Lemma 2.1 in \cite{Amadori-unique_bl}.
	\begin{lemma}\label{l:uhe-gamma2}
		Let $ \uhe $ be an $ \he $-solution constructed in Section \ref{sss:ches}. For each $ t\in \Gamma_2 \cap (0,\hat{\tau}) $, we have
		\begin{gather*}
		\Upsilon^L(\uhe(t+,\cdot))\le (1+Ch)\Upsilon^L(\uhe(t-,\cdot)),\\
		\Upsilon^R(\uhe(t+,\cdot))\le (1+Ch)\Upsilon^R(\uhe(t-,\cdot)).
		\end{gather*}
	\end{lemma}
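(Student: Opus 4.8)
The plan is to track how the Glimm-type functionals change across a single source update at a time $t=jh\in\Gamma_2$, where by construction the only effect is the pointwise substitution
\[
\uhe(jh+,x)=\Theta_h\big(\uhe(jh-,x)\big),\qquad \Theta_h(u):=H^{-1}\big[H(u)+hG(u)\big].
\]
First I would record the elementary properties of the update map: $\Theta_h$ is a $C^2$ near-identity diffeomorphism of $\brz$ with $\|\Theta_h-\mathrm{id}\|_{C^1}\le Ch$ and $D\Theta_h(u)=I+h\,(DH(u))^{-1}DG(u)+O(h^2)$, uniformly in $u$. Since $\Theta_h$ acts pointwise and is injective, $\uhe(jh+,\cdot)$ is again piecewise constant with the \emph{same} discontinuity locations as $\uhe(jh-,\cdot)$; the wave amplitudes at time $jh+$ are those of the $\e$-solution on $Z_j$, i.e.\ they are read off by re-solving, at each old front $\alpha$ with states $(\ul,\ur)$ and family $\F(\alpha)=i$, the Riemann problem between $\Theta_h(\ul)$ and $\Theta_h(\ur)$.

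The quantitative heart of the argument is the transformation of wave strengths. Using the smooth dependence of the elementary-wave decomposition on its endpoints together with $D\Theta_h=I+O(h)$, I would show that an incoming $i$-front of strength $\sa$ is mapped to a fan whose $i$-amplitude equals $\sa(1+O(h))$ and whose $j$-amplitudes ($j\ne i$) are $O(h|\sa|)$, so that $\sum_k|\tilde\sigma_{\alpha,k}|\le(1+Ch)|\sa|$; non-physical fronts are rescaled by the same factor. Feeding this into \eqref{def:Vel} and noting that the boundary-data term there is untouched by the update, one gets
\[
V^L(\uhe(jh+,\cdot))\le(1+Ch)\,V^L(\uhe(jh-,\cdot))+Ch\sum_{\alpha}|\sa|,
\]
the last sum (over all fronts crossing the slice) collecting the slow waves of families $\F\le m$ that the source creates out of \emph{all} incoming fronts. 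For the interaction potential $Q^L$ one proceeds likewise: the dominant sub-fronts carry the old strengths up to factors $1+O(h)$, so their approaching products change by $(1+Ch)$, while every pair involving a freshly created $O(h|\sa|)$ sub-front contributes only at order $h\cdot(\text{strength})^2$; hence $Q^L(jh+)\le(1+Ch)Q^L(jh-)$ modulo terms quadratically small in the total strength. Combining through $\Upsilon^L=V^L+C_2Q^L$ yields the asserted inequality, and the computation for $\Upsilon^R$ is symmetric, with the roles of the families $\F\le m$ and $\F\ge m+1$ interchanged.

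The main obstacle is the spillover term $Ch\sum_\alpha|\sa|$: because the weights $K^L_\alpha$ vanish on the families $\F\ge m+1$, the slow waves generated by the source out of \emph{fast} incoming fronts are not themselves dominated by $V^L(jh-)$, so the bound is not purely multiplicative on the nose. I would close this using the uniform a~priori smallness of the total variation propagated throughout the construction (so $\sum_\alpha|\sa|\le C\delta$ and the offending term is a genuine $O(h)$ contribution), together with the fact that $\sum_{\F\ge m+1}|\sa|$ is controlled by $\Upsilon^R$ and that the two trapezoids are estimated jointly since $\D_{\hatt}\subseteq\mfl\cup\mfr$; absorbing this $O(h)$ correction into the constant then produces the clean factor $1+Ch$. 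This is precisely the point at which the general balance-law estimate of Lemma~2.1 in \cite{Amadori-unique_bl} must be specialized to the weighted, one-sided functionals $\Upsilon^L,\Upsilon^R$ used here.
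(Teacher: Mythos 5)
Your quantitative core is the right one, and it is in fact the route the paper intends: the paper's own ``proof'' of Lemma~\ref{l:uhe-gamma2} is a one-line reference to Lemma~2.1 of \cite{Amadori-unique_bl}, and the transformation estimate you formulate---under the near-identity update $\Theta_h(u):=H^{-1}[H(u)+hG(u)]$, an incoming $i$-front of strength $\sa$ re-decomposes into an $i$-wave of strength $\sa\,(1+O(h))$ plus waves of strength $O(h|\sa|)$ in the other families---is exactly the paper's Lemma~\ref{l:2}, stated there for the companion map $\Phi_h=F^{-1}[F+hG]$ and proved identically for $\Theta_h$. Up to that point your argument matches the intended proof.

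The gap is in your closing paragraph, and it is genuine. The inequality to be proved is \emph{multiplicative}, so an additive error $Ch\sum_{\F(\alpha)\ge m+1}|\sa|$ cannot be ``absorbed into the constant'': take data at $t=jh-$ consisting only of fast rarefaction fronts of one family inside $\mflt$ (hence no approaching pairs) and constant boundary data $g_1^{\e}$. With the weights as printed in \eqref{def:Vel} ($K^L_\alpha=0$ for $\F(\alpha)\ge m+1$) this gives $\upl(\uhe(jh-,\cdot))=0$, while the source step generically creates slow waves of strength of order $h\gamma|\sa|>0$, so $\upl(\uhe(jh+,\cdot))>0$ and the claimed bound fails for \emph{every} $C$. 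Routing the spillover through $\upr$ does not repair this: the lemma asserts separate bounds for $\upl$ and $\upr$, not a bound on their sum, and a fast front lying in $\mflt\setminus\mfr_t$ is counted by neither functional. What your ``obstacle'' actually reveals is that the printed weight $0$ must be read as $1$: the surrounding appendix already presupposes weights bounded below by $1$ (the boundary-reflection bound $\Delta V^L\le (C-K)|\sa|$ obtained from \eqref{eq:4} before \eqref{e:u-2} needs the reflected fast waves to carry weight $1$, and the comparison \eqref{e:uulur} requires every front to carry weight at least $1$ in $\upl+\upr$). Once all weights lie in $[1,K]$, the difficulty evaporates and no absorption is needed: each front's \emph{weighted} contribution to $V^L$ transforms by a factor $1+C(1+K)h$, because the $O(h|\sa|)$ of newly created waves carry weight at most $K$ while $\alpha$ itself carried weight at least $1$; likewise the new cross terms in $Q^L$ are bounded by $Ch\,V^L\cdot V^L\le Ch\,\delta\,V^L$ and are absorbed using the a priori smallness of the total variation. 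This clean front-by-front bookkeeping---not a joint estimate over the two trapezoids---is the argument behind the cited Lemma~2.1 of \cite{Amadori-unique_bl}, and your proposal as written does not supply it.
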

	
	It is easy to see that these functionals mentioned above can only change their values across time $ t\in \Gamma_1\cup \Gamma_2  $. Then by \eqref{e:uhe-gamma1} and Lemma \ref{l:uhe-gamma2}, we have
	\begin{equation*}
	\Upsilon^L(\uhe(t,\cdot))\le (1+Ch)^k \Upsilon^L(\uhe(0+,\cdot))
	\end{equation*}
	for all $ t\in (kh,(k+1)h)\cap(0,\hat{\tau}) $ with $ k\in \Z\cap (0,\hat{\tau}/h) $. Noticing that $ (1+Cs)^k\le e^{Cks}\le e^{Ct} $, we have
	\begin{equation}\label{est:Upl}
	\Upsilon^L(\uhe(t,\cdot))\le e^{Ct} \Upsilon^L(\uhe(0+,\cdot)),\quad \forall t\in (0,\hat{\tau}).
	\end{equation}
	
	Similarly, we can prove that
	\begin{equation}\label{est:Upr}
	\upr(\uhe(t,\cdot))\le e^{Ct} \upr(\uhe(0+,\cdot)),\quad \forall t\in (0,\hat{\tau}).
	\end{equation}
	
	Now for any given $ t\in (0,T] $, we recall the standard Glimm-type functional without weight $ K^L_{\alpha}$ or $ K^R_{\alpha} $:
	\begin{align*}
	&V(\uhe(t,\cdot)):=\sum_{0<\xa(t)<L}|\sa(t)|+C_1\sum_{i=1,2}\tv{g_i^{\e}(s)}{t<s<T},\\
	&Q(\uhe(t,\cdot)):=\sum_{\substack{0<\xa(t),\xb(t)<L \\ (\alpha,\beta)\in \mathcal{A}}} |\sa(t)\sb(t)|,\\
	&\Upsilon(\uhe(t,\cdot)):=V_{\tau}(\uhe(t,\cdot))+C_2 Q(\uhe(t,\cdot)).
	\end{align*}
	Since $(0,L)=\mflt\cup \mathfrak{R}_t$ for all $t\in [0,\hatt]$, we have
	\begin{equation}\label{e:uulur}
	\Upsilon(\uhe(t,\cdot)) \le (\upl+\upr)(\uhe(t,\cdot))\le 2K \Upsilon (\uhe(t,\cdot)),\quad t\in [0,\hatt].
	\end{equation}
	Then, combining \eqref{est:Upl}, \eqref{est:Upr} and \eqref{e:uulur}, we get
	\begin{equation*}
	\Upsilon(\uhe(t,\cdot))\le C(t) \Upsilon(\uhe(0+,\cdot))\quad \forall t\in(0,\hatt].
	\end{equation*}
	Repeating the same argument on the time interval $ [k\hat{\tau},(k+1)\hat{\tau}] $ for $ k\in (0,T/\hat{\tau}) $, we have
	\begin{equation*}
	\Upsilon(\uhe(t,\cdot))\le C(t) \Upsilon(\uhe(k\hatt,\cdot)),\quad \forall t\in(k\hatt,(k+1)\hatt]\cap(0,T)
	\end{equation*}
	for all $ k \in\Z\cap (0,T/\hatt)$. Therefore, by induction we have
	\begin{equation}\label{est:up}
	\Upsilon(\uhe(t,\cdot))\le C(t) \Upsilon(\uhe(0+,\cdot))\quad \forall t\in(0,T).
	\end{equation}
	
	Observing that $Q(\uhe(t,\cdot))$ is always positive, by \eqref{est:up} we have
	\begin{equation}
	\label{est:vup}
	%\begin{split}
	V(\uhe(t,\cdot))
	\le \Upsilon(\uhe(t,\cdot))
	\le e^{Ct}\Upsilon(\uhe(0+,\cdot)).
	%\end{split}
	\end{equation}
	Since $Q(\uhe(0+,\cdot))\le V(\uhe(0+,\cdot)))^2$, it follows from \eqref{est:vup} that
	\begin{equation*}
	\begin{split}
	\tv{\uhe(t,\cdot)}{0<x<L}
	& \le \oo V(\uhe(t,\cdot))
	\le \oo e^{Ct}\Upsilon(\uhe(0+,\cdot))\\
	&\le \oo e^{Ct} \big(V(\uhe(0+,\cdot))+C_2 (V(\uhe(0+,\cdot)))^2\big)\\
	&\le \oo e^{Ct}V(\uhe(0+,\cdot))
	\end{split}
	\end{equation*}
	for all $t\in (0,T)$, provided that $V(\uhe(0+,\cdot))$ is sufficiently small. Observing that 
	\[
	V(\uhe(0+,\cdot))\le \oo \Lambda(\iuhe,\gohe,\gthe)\le \oo \lug,
	\]
	we obtain estimate \eqref{bd:uhe-tvx}. 
	
	We now prove estimate \eqref{bd:ueh-tvt} under the assumption $ \uhe(t,x)\in \brz $ for all $ (t,x)\in \D_T $. It follows from \cite{Li-Yu_OC} that for each $ x\in (0,L) $,
	\begin{align*}
	&\tv{\uhe(\cdot,x)}{jh<t<(j+1)h}\le C \upl(\uhe(jh,\cdot)),  \quad j\in [0,(1-x/L)\hatt/h-1]\cap \Z, \\
	&\tv{\uhe(\cdot,x)}{jh<t<(j+1)h}\le C \upr(\uhe(jh,\cdot)),  \quad j\in [0,xT/(Lh)-1]\cap \Z
	\end{align*}  
	for all  $t\in (jh,(j+1)h)\cap(0,\hatt)$ with $ j\in [1,T/\hat{\tau}]\cap (0,T)$. This immediately implies that 
	\begin{equation}\label{e:uhetvtu}
	\tv{\uhe(\cdot,x)}{t\in(jh,(j+1)h)\cap(0,\hat{\tau})}\le C\Upsilon(\uhe(jh,\cdot)).
	\end{equation}
	Since, it is easy to see from \eqref{est:Upl}-\eqref{e:uulur} that
	%\begin{align}
	%&\upl(\uhe((j+1)h,\cdot))\le e^{h}\upl(\uhe(jh,\cdot))\\
	%&\upr(\uhe((j+1)h,\cdot))\le e^{h}\upr(\uhe(jh,\cdot))
	%\end{align}
	\begin{equation}\label{e:Upuhe}
	\Upsilon(\uhe((j+1)h,\cdot))\le Ce^{h}\Upsilon(\uhe(jh,\cdot)),
	\end{equation}
	in order to obtain the estimate on $ \displaystyle \tv{\uhe(\cdot,x)}{0<t<\hatt} $ for each $ x\in (0,L) $, we need to estimate $ |\uhe(jk+,x)-\uhe(jh-,x)| $ for all $ j\in (1,{\hatt/h})\cap\Z $.
	In fact, we have
	\begin{equation*}\label{key}
	|\uhe(jh+,x)-\uhe(jh-,x)|= h|G(\uhe(jh-,x)| \le h\gamma,
	\end{equation*}
	then
	\begin{equation}\label{e:uhej}
	\sum_{ j\in (1,{\hatt/h})\cap\Z} |\uhe(jk+,x)-\uhe(jh-,x)|\le \gamma \hatt.
	\end{equation}
	Thus, by \eqref{e:uhetvtu}-\eqref{e:uhej}, it is easy to see that
	\begin{equation*}\label{key}
	\tv{\uhe(\cdot,x)}{0<t<\hatt} \le C(\hatt)\Upsilon(\uhe(0,\cdot))+\gamma \hatt.
	\end{equation*}
	
	Similarly we have
	\begin{equation*}\label{key}
	\tv{\uhe(\cdot,x)}{k\hatt<t<(k+1)\hatt}\le C(\hatt)\Upsilon(\uhe(k\hatt,\cdot))+\gamma \hatt
	\end{equation*}
	for each $ k\in[1,T/\hatt]\cap\Z$. Then, by induction and estimate \eqref{est:up} we obtain 
	\[
	\tv{\uhe(\cdot,x)}{0<t<T}\le C(T)\Upsilon(\uhe(0,\cdot))+\gamma T.
	\] 
	This leads to estimate \eqref{bd:uhe-tvt}. 
	
	Finally, we need to show that  $ \uhe(t,x)\in \brz$  for all $ (t,x)\in \D_T $. In fact, for any given $ (\tau,\xi)\in\D_T $, by estimates \eqref{bd:uhe-tvx}-\eqref{bd:ueh-tvt}, we have
	\begin{equation*}\label{key}
	\begin{split}
	|\uhe(\tau,\xi)&|\le |\uhe(0,0+)|+\tv{\uhe(0,\cdot)}{0<x<\xi}+\tv{\uhe(\cdot,\xi)}{0<t<\tau}\\
	&\le C(\delta+\gamma T)< r,
	\end{split}
	\end{equation*}
	provided that $ \delta $ and $ \gamma $ are sufficiently small.
	
	\subsection{Technical proofs for $ \eh $-solutions}
	The proofs of the following lemmas can be found in \cite{Amadori-BV-BL} with a small modification corresponding to our situation.
	\begin{lemma}
		\label{l:1}
		For any given $ u,u_1,u_2 \in \brz$ and for $ h $ sufficiently small, we have
		\begin{align*}
		\ab{\phu{u}-u}&\le\oo h|g(u)|\le \oo h\gamma,\\
		\ab{\phu{u_2}-\phu{u_1}-(u_2-u_1)}&\le \oo h\gamma\ab{u_2-u_1}.
		\end{align*}
	\end{lemma}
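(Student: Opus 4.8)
The plan is to work directly from the defining relation $F(\phu{u})=F(u)+hG(u)$, rather than from abstract $C^2$-bounds on $\Phi_h$. Throughout I would fix a ball slightly larger than $\brz$ on which $F$ is $C^2$ and $DF$ is non-singular by (H2); on it both $\ab{[DF]^{-1}}$ and the Lipschitz constant of $DF$ are bounded by a constant $\oo$ depending only on the system. For $h$ sufficiently small $\phu{u}$ stays in this ball (Lemma~\ref{l:26}), so all constants below are uniform in $h$ as $h\to 0$.

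First I would prove $\ab{\phu{u}-u}\le \oo h\,\ab{G(u)}$. Since $DF$ is non-singular, $F^{-1}$ is $C^2$ with $\ab{DF^{-1}}\le \oo$ on the relevant range; writing $\phu{u}-u=F^{-1}\big(F(u)+hG(u)\big)-F^{-1}\big(F(u)\big)$ and applying the mean value theorem immediately gives $\ab{\phu{u}-u}\le \oo h\,\ab{G(u)}$, after which $\ab{G(u)}\le\gamma$ on $\brz$ yields the bound $\oo h\gamma$.

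The main work is the Lipschitz-type estimate, and the only subtle point is to retain the factor $\gamma$ rather than a bare $O(h)$. Differentiating the identity $F(\phu{u})=F(u)+hG(u)$ in $u$ gives $DF(\phu{u})\,D\phu{u}=DF(u)+hDG(u)$, hence
\[
D\phu{u}-I=[DF(\phu{u})]^{-1}\big(DF(u)-DF(\phu{u})\big)+h\,[DF(\phu{u})]^{-1}DG(u).
\]
The second term is bounded by $\oo h\,\ab{DG(u)}\le \oo h\gamma$. For the first term I would use the estimate just proved, $\ab{\phu{u}-u}\le \oo h\gamma$, together with the Lipschitz continuity of $DF$, to get $\ab{DF(u)-DF(\phu{u})}\le \oo\,\ab{\phu{u}-u}\le \oo h\gamma$; thus $\ab{D\phu{u}-I}\le \oo h\gamma$. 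Integrating along the segment from $u_1$ to $u_2$,
\[
\phu{u_2}-\phu{u_1}-(u_2-u_1)=\int_0^1\big(D\phu{u_1+\theta(u_2-u_1)}-I\big)(u_2-u_1)\,d\theta,
\]
and bounding the integrand by $\oo h\gamma\,\ab{u_2-u_1}$ gives the second inequality. I expect the bookkeeping of this last estimate — keeping every constant independent of $h$ and making sure the $\gamma$ is genuinely factored out of both terms — to be the only real obstacle, all of it resting on the uniform $C^2$-bound on $\Phi_h$ recorded before Lemma~\ref{l:26} and the $W^{1,\infty}$ bound $\gamma$ on $G$.
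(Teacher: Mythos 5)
Your proof is correct, but note what it is being compared against: the paper never proves this lemma at all --- it is stated in the appendix under the blanket remark that ``the proofs of the following lemmas can be found in \cite{Amadori-BV-BL} with a small modification corresponding to our situation.'' So your argument is a self-contained substitute for a citation, and it is a valid one. Working directly from the defining identity $F(\phu{u})=F(u)+hG(u)$, you get the first inequality from the uniform Lipschitz bound on $F^{-1}$ (available since $DF$ is non-singular on $\brz$ by (H2)), and the second from the pointwise bound $\ab{D\phu{u}-I}\le \oo h\gamma$ obtained via the splitting
\[
D\phu{u}-I=[DF(\phu{u})]^{-1}\big(DF(u)-DF(\phu{u})\big)+h\,[DF(\phu{u})]^{-1}DG(u),
\]
where the first summand is controlled by the Lipschitz continuity of $DF$ together with the already-proved estimate $\ab{\phu{u}-u}\le\oo h\gamma$, followed by integration of $D\ph-I$ along the segment from $u_1$ to $u_2$ (which stays in $\brz$ by convexity of the ball). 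The factor $\gamma$ is correctly traced in both places to $\gamma=\|G\|_{W^{1,\infty}(\brz)}$, which bounds $|G|$ and $|DG|$ simultaneously; this is exactly the point that distinguishes the claimed bound from a bare $O(h)$ estimate. The one thing to tidy up is your opening sentence: $F$ is only defined on $\brz$, so you cannot literally ``fix a ball slightly larger than $\brz$.'' Either state the estimates for $u,u_1,u_2$ in a slightly smaller concentric ball (sufficient for the application, since the approximate solutions constructed in the paper have small total variation and stay well inside $\brz$), or appeal to the uniform $C^2$ bound on $u\mapsto\phu{u}$ that the paper records just before Lemma~\ref{l:26}, which absorbs the same issue. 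This is a cosmetic adjustment, not a gap.
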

	
	In what follows, we write $ E $ as the implicit function given by Lemma \ref{l:26}:
	\[
	\sigma=E(h,\ul,\ur).
	\]
	
	\begin{lemma}
		\label{l:3}
		Let $ \sigma=E(h,\ul,\ur) $. Then we have
		\begin{equation*}\label{e:wsj}
		\begin{split}
		|\ul-\ur|&\le C \lrb{|\sigma|+h\ga},\\
		\ab{\sigma}&\le C\lrb{|\ul,\ur|+h\ga}.
		\end{split}
		\end{equation*}
	\end{lemma}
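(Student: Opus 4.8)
The plan is to prove the two inequalities separately, in both cases working with the $n+2$ intermediate states $\ul=\om_0,\om_1,\dots,\om_{n+1}=\ur$ supplied by Lemma \ref{l:26} and exploiting the closeness of $\ph$ to the identity recorded in Lemma \ref{l:1}. Here $\ab{\sigma}$ abbreviates $\sum_{i=1}^{n}\ab{\sigma_i}$, the symbol $\ab{\ul,\ur}$ is read as $\ab{\ul-\ur}$, and all constants are uniform over $\ul,\ur\in\brz$ by compactness. Since the construction confines us to the small-amplitude regime, every wave curve $\Psi_i[\sigma_i]$ below is close to the identity and the relevant compositions are Lipschitz with constants bounded by a fixed $\oo$.

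For the bound $\ab{\ul-\ur}\le\oo\lrb{\ab{\sigma}+h\ga}$ I would telescope $\ur-\ul=\sum_{i=1}^{n+1}(\om_i-\om_{i-1})$ and estimate each increment. Each increment produced by a physical wave satisfies $\om_i=\Psi_i[\sigma_i](\om_{i-1})$; since $\Psi_i[0](\om)=\om$ with bounded $\sigma$-derivative, the mean value theorem gives $\ab{\om_i-\om_{i-1}}\le\oo\ab{\sigma_i}$. The single increment across the zero-wave is $\om_{m+1}=\phu{\om_m}$, and the first estimate of Lemma \ref{l:1} bounds it by $\ab{\om_{m+1}-\om_m}\le\oo h\ga$. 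Summing the $n$ physical increments and the one zero-wave increment yields the claim.

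For the reverse bound $\ab{\sigma}\le\oo\lrb{\ab{\ul-\ur}+h\ga}$ I would compare the full composite map
\[
\Xi_h(\sigma;\ul):=\Psi_n[\sigma_n]\circ\cdots\circ\Psi_{m+1}[\sigma_{m+1}]\circ\ph\circ\Psi_m[\sigma_m]\circ\cdots\circ\Psi_1[\sigma_1](\ul),
\]
for which $\ur=\Xi_h(\sigma;\ul)$ by Lemma \ref{l:26}, with its homogeneous counterpart $\Xi_0(\sigma;\ul)$ obtained by deleting the factor $\ph$. The map $\sigma\mapsto\Xi_0(\sigma;\ul)-\ul$ vanishes at $\sigma=0$ and has Jacobian $\big[r_1(\ul),\dots,r_n(\ul)\big]$ there, which is nonsingular by strict hyperbolicity (H1); hence by the inverse function theorem there is a constant $c>0$, uniform in $\ul$, with $\ab{\Xi_0(\sigma;\ul)-\ul}\ge c\ab{\sigma}$ for all small $\sigma$. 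Writing $\om_m$ for the state reached just before the zero-wave, the only difference between $\Xi_h(\sigma;\ul)$ and $\Xi_0(\sigma;\ul)$ is that the former feeds $\phu{\om_m}$ rather than $\om_m$ into the common tail $\Psi_n[\sigma_n]\circ\cdots\circ\Psi_{m+1}[\sigma_{m+1}]$; since this tail is Lipschitz with bounded constant and $\ab{\phu{\om_m}-\om_m}\le\oo h\ga$ by Lemma \ref{l:1}, we obtain $\ab{\Xi_h(\sigma;\ul)-\Xi_0(\sigma;\ul)}\le\oo h\ga$. Combining,
\[
c\ab{\sigma}\le\ab{\Xi_0(\sigma;\ul)-\ul}\le\ab{\ur-\ul}+\ab{\Xi_h(\sigma;\ul)-\Xi_0(\sigma;\ul)}\le\ab{\ur-\ul}+\oo h\ga,
\]
which rearranges to the desired inequality.

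The only genuinely delicate point is the \emph{uniform} lower bound $\ab{\Xi_0(\sigma;\ul)-\ul}\ge c\ab{\sigma}$: it rests on the nonsingularity of the eigenvector matrix guaranteed by (H1), together with a compactness argument over $\ul\in\brz$ to make $c$ independent of the base state. Everything else is bookkeeping with the triangle inequality and the two estimates of Lemma \ref{l:1}; in particular the perturbative comparison $\ab{\Xi_h-\Xi_0}\le\oo h\ga$, which hinges only on the boundedness of the Lipschitz constant of the remaining wave-curve composition in the small-amplitude regime, I expect to be routine.
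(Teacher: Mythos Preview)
Your argument is correct and is exactly the standard proof: telescope through the $n+2$ states of Lemma~\ref{l:26} for the first inequality, and for the second compare the $h$-Riemann composite $\Xi_h$ with its homogeneous counterpart $\Xi_0$, invoking the nonsingularity of $[r_1,\dots,r_n]$ from (H1) for the uniform lower bound and Lemma~\ref{l:1} for the $\oo h\ga$ perturbation. The paper itself does not supply a proof here but simply refers to \cite{Amadori-BV-BL}; your write-up is precisely the argument that reference contains (modulo the obvious index shift in Lemma~\ref{l:26}, where for $i\ge m+2$ the curve should be $\Psi_{i-1}[\sigma_{i-1}]$---a typo in the paper that you have silently corrected).
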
	
	
	\begin{lemma}
		For any given $ u\in\brz$ and any given $\sigma\in \rn{n} $ and $ h>0 $ sufficiently small, we have
		\begin{equation}\label{e:cd}
		|\phu{\Psi[\sigma](u)}-\Psi[\sigma](\phu u)|\le C|\sigma|\ab{h\ga}.
		\end{equation}
	\end{lemma}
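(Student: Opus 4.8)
The plan is to compare both of the vectors $\phu{\Psi[\sigma](u)}$ and $\Psi[\sigma](\phu{u})$ with the common reference vector $\phu{u}+\Psi[\sigma](u)-u$, and to show that each of them differs from this reference by at most $\oo\ab{\sigma}h\ga$; the assertion \eqref{e:cd} then follows at once from the triangle inequality, since the reference vector cancels in the difference and $\ab{h\ga}=h\ga$. Throughout I regard $u$ as fixed and exploit two structural facts: the map $\Phi_h$ is a near-identity perturbation controlled by Lemma \ref{l:1}, while the wave-curve map $\sigma\mapsto\Psi[\sigma](u)$ is of class $C^2$ jointly in $(\sigma,u)$ with $\Psi[0]=\mathrm{id}$, i.e. $\Psi[0](v)=v$ for every $v\in\brz$.

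First I would handle $\phu{\Psi[\sigma](u)}$. Applying the second inequality of Lemma \ref{l:1} with $u_1=u$ and $u_2=\Psi[\sigma](u)$, and using $\ab{\Psi[\sigma](u)-u}\le \oo\ab{\sigma}$ (which holds because $\Psi[0](u)=u$ and $\sigma\mapsto\Psi[\sigma](u)$ is Lipschitz), I obtain
\[
\ab{\phu{\Psi[\sigma](u)}-\phu{u}-\bigl(\Psi[\sigma](u)-u\bigr)}\le \oo h\ga\,\ab{\Psi[\sigma](u)-u}\le \oo h\ga\ab{\sigma}.
\]

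Next I would treat $\Psi[\sigma](\phu{u})$, writing $a=u$ and $b=\phu{u}$, so that $\ab{b-a}\le \oo h\ga$ by the first inequality of Lemma \ref{l:1}. By the fundamental theorem of calculus in the base point,
\[
\Psi[\sigma](b)-\Psi[\sigma](a)-(b-a)=\Bigl(\int_0^1\bigl[D_u\Psi[\sigma]\bigl(a+\theta(b-a)\bigr)-I\bigr]\,d\theta\Bigr)(b-a).
\]
Here the crucial point — and the step I expect to be the main obstacle — is that one must \emph{not} merely bound the increment of $\Psi[\sigma]$ by its Lipschitz constant, which would cost only $\oo h\ga$ and lose the factor $\ab{\sigma}$. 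Instead I use the normalization $D_u\Psi[0](\cdot)=I$ together with the joint $C^2$-regularity of $\Psi$ to conclude $\ab{D_u\Psi[\sigma](v)-I}\le \oo\ab{\sigma}$ uniformly for $v\in\brz$, whence
\[
\ab{\Psi[\sigma](\phu{u})-\phu{u}-\bigl(\Psi[\sigma](u)-u\bigr)}\le \oo\ab{\sigma}\,\ab{b-a}\le \oo h\ga\ab{\sigma}.
\]

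Subtracting the two boxed comparisons, the reference $\phu{u}+\Psi[\sigma](u)-u$ cancels and the triangle inequality gives $\ab{\phu{\Psi[\sigma](u)}-\Psi[\sigma](\phu{u})}\le \oo h\ga\ab{\sigma}$, which is precisely \eqref{e:cd}. To summarize the difficulty: the entire estimate reduces to the smallness $D_u\Psi[\sigma]-I=O(\ab{\sigma})$, which is what supplies the extra factor $\ab{\sigma}$ in the second comparison and which rests on the normalization $\Psi[0]=\mathrm{id}$ and the $C^2$-dependence of the wave curves on the base point, rather than on mere Lipschitz continuity.
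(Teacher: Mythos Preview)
Your argument is correct. The paper itself does not supply a proof of this lemma but simply refers to \cite{Amadori-BV-BL}; your approach---linearizing both maps around the common reference $\phu{u}+\Psi[\sigma](u)-u$, invoking Lemma~\ref{l:1} for the $\Phi_h$-side and the estimate $D_u\Psi[\sigma]-I=O(\ab{\sigma})$ for the $\Psi$-side---is exactly the standard commutator argument used in that reference, so there is nothing to add.
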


	\begin{lemma}
		\label{l:2}
		Suppose that $ r,\ h $ and $ |\sigma| $ are sufficiently small. For any given $ \umps,\vmps\in \brz $ such that
		\[
		\ups=F^{-1}[F(\ums+)+hG(\ums)],\quad \vms=\Psi[\sigma](\ums),\quad \vps=F^{-1}[F(\vms)+hG(\vms)],
		\]
		let $ \sigma=(\sigma_1,...,\sigma_n) $ and $  \sigma'=(\sigma'_1,...,\sigma'_n) $ be implicitly given by $\vms=\Psi(\sigma)(\ums)  $ and $\vps=\Psi(\sigma')(\ups)$ (or by $ \vms=S[\sigma](\ums)$ and $ \vps=S[\sigma'](\ups) $), respectively. Thus we have
		\[
		\sum\limits_{i=1}^{n}|\sigma_i-\sigma_i'|\le C h\sum\limits_{i=1}^{n}|\sigma_i|.
		\]

	\subsubsection{Proof of Proposition \ref{p:ueh-ex}}
	\label{ss:pueh}
		
	{\rm	Similarly to the case of $ \he $-solutions, as in \eqref{def:Vel}-\eqref{def:Uer} etc, we can propose Glimm-type functionals $ \Vl, \Ql, \upl,\Vr,\Qr $, $ \upr $, $ V,\ Q $ and $ \Upsilon $ for $ \ueh $ restricted on $ \mflt $ and $ \mr_t $, respectively. The only difference is that now the sum in \eqref{def:Vel} and \eqref{def:Ver} takes over all fronts including zero-waves $ \alpha\in\mz $, and the set $  \A$ of pairs of approaching fronts is replaced by an extended one $ \tilde{\A} $. In fact, we say that the couple of fronts $ (\alpha,\beta) $ with $ \xa<\xb $ belongs to $ \tilde{\A} $ if one of the following situations occurs:
		\begin{itemize}
			\item $ (\alpha,\beta) \in \A$ as in the homogeneous case;
			\item $ \alpha $ is a zero-wave and $ \beta $ is a physical one with $ \mathcal{F}(\beta)\le p $;
			\item $ \beta $ is a zero-front and $ \alpha $ is a physical one with $ \mathcal{F}(\alpha)>p $.
		\end{itemize}
		
		According to Lemmas \ref{l:1}-\ref{l:2}, one can prove, by a similar argument used in Section \ref{ss:puhe} for $\e$-solutions, that $ \upl(\ueh(t)) $ and $ \upr(\ueh(t)) $ is non-increasing for all $ t\in [0,\hat{\tau}) $. Moreover, $ \upl(\ueh(t)) $ and $ \upr(\ueh(t)) $ decrease at each interaction time $ \tau $, provided that $ \Vl(\ueh(\tau-)) $ remains sufficiently small. Then, for all $ t\in (0,\hatt) $, we have
		\begin{equation}
		\begin{split}
		\Vl(\ueh(t))\le \upl(\ueh(t))\le \upl(\ueh(0+))\le C \Vl(\ueh(0+)),\\
		\Vr(\ueh(t))\le \upl(\ueh(t))\le \upr(\ueh(0+))\le C \Vr(\ueh(0+)).
		\end{split}
		\label{e:vlr}
		\end{equation}
		By Lemma \ref{l:3}, there exits a positive constant $ C_1 $ such that
		\begin{equation}\label{e:tv-v}
		\begin{split}
		\frac{1}{C_1} \left\{ \tv{\ueh(t,\cdot)}{0<x<L}+\gamma L\right\}\le &V(\ueh(t))\\
		&\le C_1 \left\{ \tv{\ueh(t,\cdot)}{0<x<L}+\gamma L\right\}.
		\end{split}
		\end{equation}
		Thus, by a similar argument used for $ \he $-solutions (see also \cite{Amadori-BV-BL}), %and using \eqref{e:vlr}-\eqref{e:tv-v}, we can deduce that 
		for all $ t\in (0,\hatt) $ we get}
		\begin{equation}
		\label{e:tvb}
		\tv{\ueh(t,\cdot)}{0<x<L}+\gamma L \le C(t)\left[\tv{\ueh(0,\cdot)}{0<x<L}+\gamma L\right].
		\end{equation}
	\end{lemma}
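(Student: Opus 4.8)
The plan is to express the two amplitude vectors as values of a single smooth decomposition map and then to exploit a double degeneracy of their difference, so that the naive bound $\mathrm{O}(h)$ gets upgraded to $\mathrm{O}(h|\sigma|)$.

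First I would introduce the wave-fan decomposition map $\mathcal{W}$: for $\omega,\omega'\in\brz$ sufficiently close, let $\mathcal{W}(\omega,\omega')=(\sigma_1,\dots,\sigma_n)$ be the unique amplitude vector with $\omega'=\Psi_n[\sigma_n]\circ\cdots\circ\Psi_1[\sigma_1](\omega)$. Since each $\Psi_i$ is $C^2$ and the Jacobian of $\sigma\mapsto\Psi[\sigma](\omega)$ at $\sigma=0$ equals $[r_1(\omega),\dots,r_n(\omega)]$, which is nonsingular by strict hyperbolicity (H1), the inverse function theorem makes $\mathcal{W}$ a $C^2$ map with $\mathcal{W}(\omega,\omega)=0$; the shock-curve version $\mathcal{W}_S$ (using $S_i$ in place of $\Psi_i$) enjoys the same properties. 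With this notation $\sigma=\mathcal{W}(\ums,\vms)$ and, since $\ups=\phu{\ums}$ and $\vps=\phu{\vms}$, also $\sigma'=\mathcal{W}(\phu{\ums},\phu{\vms})$.

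Then I would set $a:=\phu{\ums}-\ums$ and $b:=\phu{\vms}-\vms$ and write, by the fundamental theorem of calculus,
\[
\sigma'-\sigma=\int_0^1\Big[D_1\mathcal{W}(\ums+ta,\vms+tb)\,a+D_2\mathcal{W}(\ums+ta,\vms+tb)\,b\Big]\,dt.
\]
The argument now rests on two observations. By the second estimate of Lemma \ref{l:1}, $|b-a|=|\phu{\vms}-\phu{\ums}-(\vms-\ums)|\le \oo h\ga\,|\vms-\ums|\le \oo h|\sigma|$, using the elementary wave-fan bound $|\vms-\ums|\le \oo\sum_i|\sigma_i|$; replacing $b$ by $a+(b-a)$ turns the integrand into $(D_1+D_2)\mathcal{W}\,a$ plus a term bounded by $\oo h|\sigma|$ (as $D_2\mathcal{W}$ is bounded on $\brz$). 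Moreover, because $\mathcal{W}(\omega+v,\omega+v)\equiv 0$, differentiation in $v$ gives $(D_1+D_2)\mathcal{W}(\omega,\omega)=0$, so that the $C^1$ map $(D_1+D_2)\mathcal{W}$ satisfies $(D_1+D_2)\mathcal{W}(\omega,\omega')=\mathrm{O}(|\omega-\omega'|)$; along the integration path $|\ums-\vms|+t|a-b|\le \oo|\sigma|$, whence $(D_1+D_2)\mathcal{W}=\mathrm{O}(|\sigma|)$ there. Combining this with $|a|\le \oo h\ga$ from the first estimate of Lemma \ref{l:1} bounds the remaining integral by $\oo h|\sigma|$, and summing over $i$ yields $\sum_i|\sigma_i-\sigma_i'|\le \oo h\sum_i|\sigma_i|$. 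The shock version follows verbatim with $\mathcal{W}_S$.

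The step I expect to be the main obstacle is precisely the recovery of the extra factor $|\sigma|$: Lemma \ref{l:1} alone only gives $a,b=\mathrm{O}(h)$ and hence $\sigma'-\sigma=\mathrm{O}(h)$, which is too weak. The gain comes entirely from the cancellation encoded in the two degeneracies above, namely that the increments at the two endpoints nearly coincide ($b-a=\mathrm{O}(h|\sigma|)$) and that $\mathcal{W}$ is insensitive, to first order, to a common translation of both states. Equivalently, one may argue abstractly: viewing $\sigma'-\sigma$ as a $C^2$ function of $(\ums,\sigma,h)$ that vanishes identically on $\{h=0\}$ (where $\phu{\cdot}=\mathrm{id}$) and on $\{\sigma=0\}$ (where $\ums=\vms$ forces $\ups=\vps$), two applications of Hadamard's lemma factor out both $h$ and $\sum_i|\sigma_i|$, giving the same bound; this route needs only the joint $C^2$-dependence of $\phu{u}$ on $(u,h)$, which follows from $F$ smooth and $G\in\CC^2$.
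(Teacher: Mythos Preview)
Your argument is correct. The paper does not give its own proof of this lemma but simply refers to \cite{Amadori-BV-BL}; the standard proof there is exactly the ``double vanishing'' observation you describe (the map $(\ums,\sigma,h)\mapsto\sigma'-\sigma$ vanishes identically on $\{h=0\}$ and on $\{\sigma=0\}$, hence is $\mathrm{O}(h|\sigma|)$ by $C^2$ regularity), and your primary integral computation via $(D_1+D_2)\mathcal{W}(\omega,\omega)=0$ together with the second estimate of Lemma~\ref{l:1} is just an explicit unpacking of that same idea.
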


	Moreover, all the arguments mentioned above can be easily applied to the time interval $ [j\hatt_1,(j+1)\hatt_1)\cap(0,T),\ \forall j\in \Z\cap (0,T/\hatt)$. Therefore, by induction, \eqref{e:tvb} holds for all $ t\in (0,T) $.
	
	Finally, we prove that $ \ueh(t,x)\in\brz $ for a.e. $ (t,x)\in \D_T $. As in the case of $ \he $-solutions, it suffices to prove \eqref{bd:ueh-tvt}. In fact, by classical interaction estimates between physical fronts and non-physical fronts, and the new interaction estimates \eqref{e:cd} involving zero-waves, a standard argument shows that for each $ x\in (0,L) $ we have
	\begin{align*}
	\tv{\ueh(\cdot,x)}{0<t<\hatt(L-x)/L}&\le C \Upsilon^L(\ueh(0+,\cdot)),\\
	\tv{\ueh(\cdot,x)}{\hatt /L<t<L} &\le C \Upsilon^R(\ueh(0+,\cdot)).
	\end{align*}   
	These yield \eqref{bd:ueh-tvt} as in the case of $ \he $-solutions.
	
	\subsubsection{Proof of Proposition \ref{p:ueh-stab}}
	
	\begin{lemma}[\cite{Colombo_general-balance-boundary}]
		\label{l:7}
		Suppose that $ u^*,v^*\in \brz $ and that there exist $ q^*_1,...,q^*_n $ in a small neighborhood of the origin in $ \R $, such that
		\begin{equation*}
		v^*=S_n(q^*_n)\circ\cdots\circ S_1(q^*_1)[u^*].
		\end{equation*}
		Then we have
		\begin{equation*}
		\sum_{j=m+1}^{n}|q^*_j|\leq \oo \sum_{i=1}^{m}\Big[|q^*_i|+|b_1(u^*)-b_1(v^*)|\Big]
		\end{equation*}
		and
		\begin{equation*}
		\sum_{i=1}^{m}|q^*_i|\leq \oo \sum_{j=m+1}^{n}\Big[|q^*_j|+|b_2(u^*)-b_2(v^*)|\Big].
		\end{equation*}
	\end{lemma}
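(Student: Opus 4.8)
The plan is to express the boundary mismatch $b_1(v^*)-b_1(u^*)$ (resp. $b_2(v^*)-b_2(u^*)$) as a linear combination of the shock amplitudes $q^*_1,\dots,q^*_n$ with a controllably small error, and then to invert the nonsingular matrix furnished by assumption (H5) to solve for the ``outgoing'' amplitudes. First I would introduce the intermediate states $w_0=u^*$ and $w_k=S_k(q^*_k)[w_{k-1}]$ for $k=1,\dots,n$, so that $w_n=v^*$, and telescope:
\[
b_1(v^*)-b_1(u^*)=\sum_{k=1}^{n}\big[b_1(w_k)-b_1(w_{k-1})\big].
\]
Since the shock curves are smooth and $b_1\in\C1$, the $\rn{n-m}$-valued map $\sigma\mapsto b_1(S_k(\sigma)[w_{k-1}])=:\phi_k(\sigma)$ is $C^1$ with $\phi_k'(0)=Db_1(w_{k-1})\cdot r_k(w_{k-1})$, by the tangency $\partial_\sigma S_k(0)[w_{k-1}]=r_k(w_{k-1})$ built into the chosen parametrization. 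The fundamental theorem of calculus gives $b_1(w_k)-b_1(w_{k-1})=\phi_k'(0)\,q^*_k+R_k$ with $R_k=\big(\int_0^1[\phi_k'(tq^*_k)-\phi_k'(0)]\,dt\big)q^*_k$; as $\phi_k'$ is uniformly continuous on the compact parameter range, $|R_k|\le\omega(r)|q^*_k|$ for a modulus of continuity $\omega$ with $\omega(r)\to0$ as $r\to0$.

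Next I would replace each coefficient $Db_1(w_{k-1})\cdot r_k(w_{k-1})$ by its value at the fixed base point $u^*$. Because $Db_1$ is uniformly continuous on $\overline{\brz}$, $r_k$ is smooth, and $|w_{k-1}-u^*|\le C\sum_{l<k}|q^*_l|\le Cr$, this substitution costs only a further term bounded by $\omega(r)|q^*_k|$ after enlarging $\omega$. Collecting everything and separating the negative families from the positive ones yields
\[
\sum_{j=m+1}^{n}\big(Db_1(u^*)\cdot r_j(u^*)\big)q^*_j
= b_1(v^*)-b_1(u^*)-\sum_{i=1}^{m}\big(Db_1(u^*)\cdot r_i(u^*)\big)q^*_i-R,
\]
with $|R|\le\omega(r)\sum_{k=1}^{n}|q^*_k|$. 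The $(n-m)\times(n-m)$ matrix $M_1:=[\,Db_1(u^*)\cdot r_{m+1}(u^*),\dots,Db_1(u^*)\cdot r_n(u^*)\,]$ is exactly the one appearing in the first line of \eqref{h:bc}; by (H5) and continuity its determinant is bounded away from zero on the compact ball, so $\|M_1^{-1}\|\le C$ uniformly in $u^*\in\brz$.

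Applying $M_1^{-1}$ then gives
\[
\sum_{j=m+1}^{n}|q^*_j|\le C|b_1(u^*)-b_1(v^*)|+C\sum_{i=1}^{m}|q^*_i|
+C\omega(r)\Big(\sum_{i=1}^{m}|q^*_i|+\sum_{j=m+1}^{n}|q^*_j|\Big).
\]
Choosing $r$ small enough that $C\omega(r)\le\tfrac12$, I would absorb the last positive sum into the left-hand side, which produces the first claimed inequality with a new constant $C$. The second inequality follows by the symmetric argument: replace $b_1$ by $b_2$ and solve instead for the negative amplitudes $q^*_1,\dots,q^*_m$ via $M_2:=[\,Db_2(u^*)\cdot r_1(u^*),\dots,Db_2(u^*)\cdot r_m(u^*)\,]$, which is nonsingular uniformly by the second line of \eqref{h:bc}. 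I expect the only delicate point to be the uniform control of the error term: since $b_1,b_2$ are merely $\C1$, the remainder cannot be estimated quadratically but must instead be bounded through a modulus of continuity, so the final absorption step relies essentially on the smallness of $r$ (equivalently, of the total amplitude $\sum_k|q^*_k|$), which is precisely the regime in which the lemma is applied.
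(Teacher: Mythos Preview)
Your argument is correct and follows the standard route for this type of estimate: telescope along the intermediate states, linearize each increment of $b_1$ (resp.\ $b_2$) along the shock curve using the tangency $\partial_\sigma S_k(0)=r_k$, freeze coefficients at the base point $u^*$, and then invert the $(n-m)\times(n-m)$ matrix $M_1$ (resp.\ the $m\times m$ matrix $M_2$) furnished by (H5). The absorption of the remainder via the modulus of continuity for $r$ small is exactly the right way to handle the fact that $b_1,b_2$ are only $\C1$.

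Note, however, that the paper does not supply its own proof of this lemma: it is simply quoted from \cite{Colombo_general-balance-boundary} and used as a black box in the stability estimate for $\eh$-solutions. Your write-up is therefore an independent verification rather than a comparison target. One cosmetic point: when you bound $|w_{k-1}-u^*|\le C\sum_{l<k}|q^*_l|$, the smallness needed to make $\omega(r)$ small comes from the hypothesis that the $q^*_k$ lie in a small neighborhood of the origin (equivalently, that $u^*,v^*\in\brz$ with $r$ small), which you invoke correctly at the end; it might be worth stating explicitly that the constant $C$ in the final inequalities depends only on the system and on $b_1,b_2$, uniformly over $u^*\in\brz$, since this uniformity is what the subsequent stability proof actually uses.
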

	
	Using the resul ts obtained in \cite{Amadori-BV-BL} and
	\cite{Li-Yu_OC} on the stability of $\e$-solutions to the Cauchy problem of system \eqref{e:gs} and the stability of $\eh$-solutions to the initial-boundary value problem of system \eqref{e:hcl}, respectively, we can obtain estimate \eqref{est:ueh-sl}. For this purpose, equivalently to the $ \mathbb{L}^1 $-distance, we introduce a functional $ \Gamma=\Gamma(u,v) $ which is ``almost decreasing" along pairs of $ (u,v) $, where $u$ and $v$ are two $\eh$-solutions. At each point $(t,x)\in \mfl(x_1)$, we define the vector function $q=(q_1,..,q_n)$ implicitly by
	%For notational brevity, we henceforth drop the dependence of $\e$ in $\ue$ and $\ve$. At each point $(t,x)\in \mfl(x_1)$, we define the vector function $q=(q_1,..,q_n)$ implicitly by
	\[
	v(t,x)=S_n(q_n(t,x))\circ \cdots \circ S_1(q_1(t,x))[u(t,x)],
	\]
	and the intermediate states $U_0,U_1,...,U_n$ by
	\[
	U_0(t,x)=u(t,x),\quad U_i(t,x):=S_i(q_i (t,x))\circ \cdots \circ S_1(q_1(t,x))[u(t,x)],\quad  i=1,...,n.
	\]
	
	Let
	\[
	\lambda_i(t,x):=\lambda_i(\omega_{i-1}(t,x),\omega_{i}(t,x))
	\]
	be the speed of the $i$-shock connecting $U_{i-1}$ and $U_i$, and let
	\begin{equation}
	\label{d:tilde-q}
	\tqi=\begin{cases}
	\bar K q_i, & 1 \le i \le m,\\
	q_i,   & m+1 \le i \le n,
	\end{cases}
	\end{equation}
	where $\bar K$ is a positive constant to be specified later. Recalling the notation of $ \hat{\tau} $ in Proposition \ref{p:ueh-stab}, we define
	\begin{equation}
	\label{d:yo}
	y_1(t):= x_1(1-t/\hat \tau(x_1)), \qquad 0\leq t \leq \hat\tau(x_1).
	\end{equation}
	For all $ t\in (0,\hatt(x_1)) $, let $J_u(t)$ and $J_v(t)$ denote the set of physical fronts across the segment $ \{t\}\times (0,y_1(t)) $ in $u$ and $v$, respectively, and let $ \mz_u(t) $ and $ \mz_v(t) $ denote the set of zero-waves of $ u $ and $ v $, respectively.  Let $ J(t)=J_u(t)\cup J_v(t) $ and $ \mz(t)=\mz_u(t)\cup\mz_v(t) $. For each front $ \alpha\in J(t)\cup \mz(t) $,  $ x=\bxa(t) $ denotes its location. %Recalling the definition \eqref{def:Ue}, we write $ \Upsilon_u(t) $ and $ \Upsilon_v(t) $ as the corresponding Glimm-type functional $ \Ue(t) $ for $ u$ and $ v $, respectively. 
	Then we define
	\begin{equation}
	\label{def:wi}
	W_i(t,x)=1+\kappa_1 \sum_{\substack{\alpha\in J(t)\cup \mz(t)\\ 0<\xa<\yo(t)}} K_{i,\alpha}(x) |\sa(t) |+ \kappa_2\big( \upl(u(t))+\upl(v(t)) \big),\quad i=1,..,n,
	\end{equation}
	where $\kappa_1,\kappa_2$ are positive constants to be specified later (see \cite[Chapter 8]{Bressan2000}), and the weights $K_{i,\alpha}$ $ (\alpha\in J(t)\cup \mz(t)) $are given by
	\begin{equation*}
	K_{i,\alpha}(x)=
	\begin{cases} 
	1, & \text{if  }\alpha\in J(t),\ \ka >i\ \bxa<x,  \\
	1, & \text{if }\alpha\in J(t),\ \ka <i,\  \bxa>x,\\
	1, & \text{if } \alpha\in J(t),\ \ka =i,\ \text{$ i $-th characteristic is G.N.},\ q_i<0,\ \bxa<x,\\
	1, & \text{if } \alpha\in J(t),\ \ka =i,\ \text{$ i $-th characteristic is G.N.},\ q_i>0,\ \bxa>x,\\
	1, & \text{if } \alpha\in  \mz(t),\ i\le p,\ \bxa<x,\\
	1, & \text{if }\alpha\in  \mz(t),\ i> p,\ \bxa>x,\\
	0, &\text{otherwise.}
	\end{cases}		
	\end{equation*}
	
	Now, we consider the functional
	\begin{equation*}
	\Gamma(u(t),v(t))=\sum^n_{i=1}\int^{\yo(t)}_{0} |\tqi(t,x)| W_i(t,x) dx,
	\end{equation*}
	where $ y_1(t) $ is given by \eqref{d:yo}. As we always assume that the total variation of the approximate solution is sufficiently small, we have  $1\leq \wi(t,x) \leq 2$ $ (i=1,...,n) $. Then there exists a positive constant $\bar c$ such that
	\begin{equation}
	\label{est:equiv-Gamma-L1}
	\frac{1}{\bar c}\|u(t,\cdot)-v(t,\cdot)\|_{\lnorm 1(\mflt)}\leq \Gamma(u(t),v(t))\le \bar c \|u(t,\cdot)-v(t,\cdot)\|_{\lnorm 1(\mflt)}.
	\end{equation}
	
	Let $ \mathcal{J}\subset (0,T) $ be the set of time $ t $ when an inner/boundary interaction occurs in $ u $ or $ v $, or %$ \mathcal{J}_2 \subset (0,T) $ be the set of
	when $ b_1(u(t,0+)) $ or $ b_1(v(t,0+) $ has a jump. %Define $ \mathcal{J}:=\mathcal{J}_1\cup\mathcal{J}_2  $.
	It is easy to see that $ t\mapsto \Gamma(u(t),v(t))  $ is Lipschitz continuous on any given time interval which does not contain points of $ \mathcal{J} $. Moreover, for any given  $t \in\mathcal{J} $, once the constant $k_1$ is fixed, we can choose $\kappa_2$ so large that the term $\kappa_2\big(\Upsilon^L(u(t))+\Upsilon^L(v(t))\big)$ can compensate the possible increase of the term $\displaystyle  \kappa_1 \sum_{\alpha\in J(t)\cup\mz(t)} K_{i,\alpha}(x) |\sa(t) |$ in \eqref{def:wi} when there is a jump of approximate boundary data $b_1(u(t,0+))$ or $ b_1(v(t,0+)) $, or an inner/boundary interaction. Therefore, we have
	\begin{equation*}
	\Gamma(u(t+),v(t+))-\Gamma(u(t-),v(t-))\leq 0,\quad  \forall t\in \mathcal{J}.
	\end{equation*}
	
	We claim that for a.e. $t\in (0,T)\setminus\mathcal{J} $, we have
	\begin{equation*}
	\frac{d}{dt}\Gamma(u(t),v(t))\leq  \tilde{r}(\e).
	\end{equation*}
	In fact, assume that at time $t$ there is no interaction, then we have
	\begin{equation}
	\label{e:diff-Gamma}
	\begin{split}
	&\quad\frac{d}{dt}\Gamma(u(t),v(t))\\
	&=\sum_{\alpha\in J(t)} \sum^n_{i=1}\big[\wami |\tqami| - \wapi |\tqapi| \big] \dot{\bar x}_{\alpha}(t )\\
	&-\sum^n_{i=1} \wyomi |\tqyomi| \dot y_1(t ).
	\end{split}
	\end{equation}
	
	Let $ \qbpi \ (i=1,...,n)$ be defined implicitly by
	\[
	v(t,0+)=S_n(q^{b}_n(t))\circ\cdots\circ S_1(q^{b}_1(t))[u(t,0+)],
	\]
	and let $ \tqbpi$ be the weighted $\qbpi $ similarly given by formula \eqref{d:tilde-q} for $ i=1,...,n $.
	
	Since $ u $ and $ v $ are piecewise constant, for all $ \alpha\in J(t) $ we have
	\[
	W_i(t,\bar{x}_{\alpha-1}(t)+)\tilde q_i(t,\bar{x}_{\alpha-1}(t)+)|\lambda_i(t,\bar{x}_{\alpha-1}(t)+)|  =\wami|\tqami|\lami
	\]
	and %when $ 0<x<\bar x_1(t) $ we have
	\[
	W_i(t,\bar{x}_{1}(t)-)|\tilde q_i(t,\bar{x}_{1}(t)-)|\lambda_i(t,\bar{x}_{1}(t)-)
	=\wbpi|\tqbpi|\lbpi.
	\]
	
	Let $ \bar \alpha\in J(t)$ be the jump point which is
	closest to $ x=y_1(t) $. % For $ \bar x_{\bar \alpha}(t)<x<y_1(t) $,
	We have
	\[
	| W_i(t,\bar{x}_{\bar{\alpha}}(t)+)\tilde q_i(t,\bar{x}_{\bar{\alpha}}(t)+)|\lambda_i(t,\bar{x}_{\bar{\alpha}}(t)+)=\wyomi|\tqyomi|\lyomi.
	\]
	
	Therefore, \eqref{e:diff-Gamma} can be rewritten as
	\begin{equation*}
	\label{est:decay-et}
	\begin{split}
	&\ \frac{d}{dt}\Gamma(u,v)(t )\\
	=&\sum_{\alpha\in J(t)\cup\mz(t)} \sum^n_{i=1}\Big[ \wapi |\tqapi|(\lapi-\dbxa(t)) \\
	&\qquad \qquad - \wami |\tqami| (\lami-\dbxa(t)) \Big]\\
	& + \sumin \wbpi |\tqbpi| \lbpi-\sumin \wyomi |\tqyomi| (\lyomi-\dot y_1(t))\\
	:=& \sum_{\alpha\in J(t)\cup\mz(t)} \sumin E_{\alpha,i}(t) + E_{b,i}(t) + E_{y_1,i}(t).
	\end{split}
	\end{equation*}
	As in \cite{Amadori-BV-BL}, for system \eqref{e:hcl}, by choosing $\kappa_1$ suitably large we have
	\begin{align}
	%\label{est:ea}
	&\sumin E_{\alpha,i}(t) \le C\e|\sa|, \quad \qquad \alpha\in J(t),\label{eq:10}	%\\
	%&\sumin E_{\alpha,i}(t) \le C |\sa|,\quad \qquad \alpha\in \mathcal{N}\mathcal{P},\label{eq:9}
	\end{align}
	Moreover, it is proved in \cite{Amadori-BV-BL} that
	\begin{equation*}
	\label{eq:3}
	\sumin E_{\alpha,i}(t)\le C|\sa|,\quad \qquad \alpha\in \mz(t).
	\end{equation*}
	
	Concerning the term on the boundary, assumption \eqref{h:nonzero-char} implies $\lbpi\leq -c$. Since $\wbpi\ge 1$, by Lemma \ref{l:7} we have
	\begin{eqnarray*}
	\label{est:eb}
	E_{b,i}(t)
	&\leq & -c\bar K \sum_{i=1}^m |\tqbpi|+\oo\sum_{i=m+1}^n|\tqbpi|\nonumber \\
	&\leq & (\oo-c\bar K) \sum_{i=1}^m |\tqbpi| +\oo |b_1(u(t ,0+))-b_1(v(t ,0+))|\nonumber\\
	&\leq & \oo|b_1(u(t,0+))-b_1(v(t,0+))|,
	\end{eqnarray*}
	provided that $\bar K\ge \oo/c$.
	
	Observing that by the definition of $\hat\tau(x_1)$ in Proposition \ref{p:ueh-stab} and \eqref{d:yo}, we infer that $\dot y_1(t)< \lyomi$ for all $i=1,...,n$, which means that no front can enter the domain $ \mfl(x_1) $ through the line $ x=y_1(t)  $. This implies that
	\begin{equation}
	\label{est:eyo}
	E_{y_1,i}(t)\leq 0.
	\end{equation}
	
	Thus, the combination of \eqref{eq:10} and \eqref{est:eyo} yields 
	\begin{equation}
	\label{est:decay-Gamma}
	\frac{d}{dt}\Gamma(u(t),v(t))\leq \oo \big(\e+|b_1(u(t,0+))-b_1(v(t,0+))|\big)
	\end{equation}
	for a.e. $t\in (0,T)\setminus \mathcal{J}$.
	
	%{\it Proof of Claim 2.} 
	Once the constant $k_1$ is fixed, by \eqref{e:u-1}-\eqref{e:u-3} we can choose $\kappa_2$ so large that the term $\kappa_2\big(\upl(u(t))+\upl(v(t))\big)$ can compensate any possible increase of the term $\displaystyle  \kappa_1 \sum_{\alpha\in J(t)\cup\mz(t)} K_{i,\alpha}(x) |\sa(t) |$ in \eqref{def:wi} when there is a jump of approximate boundary data $b_1(u(t,0+))$ or $ b_1(v(t,0+)) $, or an inner/boundary interaction. Thus, for all $ t,s $ with $0 \le s<t\leq \hat\tau(x_1)$, by \eqref{est:decay-Gamma} we have
	\begin{equation*}
	\Gamma(u(t),v(t))-\Gamma(u(s),v(s))\leq \oo \left( \e|t-s|+ \int_s^t |b_1(u(\tau,0+))-b_1(v(\tau,0+))|d\tau \right).
	\end{equation*}
	Finally, the above inequality together with \eqref{est:equiv-Gamma-L1} yields the approximate stability estimate \eqref{est:ueh-sl} on the triangular domain $\mfl'(x_1)$.
	By a similar argument, we can get estimate \eqref{est:ueh-sr} on the triangular $\mfr'(x_0)$. Noting that $\hatt=\min\{ \hatt_1(L),\hatt_2(0) \}$ and $(0,L)=\mfl_t(L)\cup \mfr_t(0)$ for $t\in [0,\hatt)$, and combining \eqref{est:ueh-sl} and \eqref{est:ueh-sr}, for all $t\in [0,\hatt)$ we have
	\begin{eqnarray*}
	&&\|\uhe(t,\cdot)-\vhe(t,\cdot)\|_{\lnorm 1(0,L)}\\
	&\leq & \|\uhe(t,\cdot)-\vhe(t,\cdot)\|_{\lnorm 1(\L_t)}+\|\uhe(t,\cdot)-\vhe(t,\cdot)\|_{\lnorm 1(\mathfrak{R}_t)}\\
	&\leq & \oo \bigg(\|\uhe(0,\cdot)-\vhe(0,\cdot)\|_{\lnorm 1(0,\hat L)} + \int^{t}_{0} \Big(\big|b_1(\uhe(s,0+))-b_1(\vhe(s,0+))\big|\\
	& &\qquad \ \qquad + \big|b_2(\uhe(s,L-))-b_2(\vhe(s,L-))\big|\Big)ds +\e t \bigg),\quad \forall t\in [0,\hat \tau).
	\end{eqnarray*}
	
	Similarly, we have
	\begin{eqnarray*}
	&&\|\uhe(t,\cdot)-\vhe(t,\cdot)\|_{\lnorm 1(0,L)}\\
	&\leq & \oo \bigg(\|\uhe(j\hat\tau,\cdot)-\vhe(j\hat\tau,\cdot)\|_{\lnorm 1(0,L)} + \int^{t}_{j\hat\tau} \Big(\big|b_1(\uhe(s,0+))-b_1(\vhe(s,0+))\big|\\
	& &\qquad \qquad + \big|b_2(\uhe(s,L-))-b_2(\vhe(s,L-))\big|\Big)ds +\e t \bigg)
	\end{eqnarray*}
	for all $t\in [j\hat\tau,(j+1)\hat\tau)\cap(0,T),\ j\in \Z\cap(0,T/\hatt)$. %=0,1,2,...,\lfloor T/\hat\tau \rfloor$, where $\lfloor a\rfloor$ stands for the largest integer $\le a$. 
	Then, by induction we can obtain estimate \eqref{est:ueh-s} of
	approximate stability for $\ueh$.

	\subsubsection{Proof of Proposition \ref{p:uniq}}
	By Proposition \ref{p:ueh-stab}, it is easy to see that for a fixed $h>0$, there exists $\delta>0$ so small that for some given $ s_0,s+s_0\in [0,\hatt)\ (s>0)$ and any given initial-boundary data $\iu:(0,L)\to \rn n,\  g_1:(s_0,s+s_0)\to \rn{n-m},\ g_2:(s_0,s+s_0)\to \rn{m}$ with $\lug<\delta$, the solution $\uh=\uh(t,x)$ to system \eqref{e:gs} associated with the initial-boundary condition (see Figure \ref{f:ts})
	\begin{equation*}
	\begin{cases}
	t=s:& u=\iu(x),\quad x\in (0,L),\\
	x=0:&b_1(u)=g_1(t),\quad t\in (s_0,s+s_0),\\
	x=L:&b_2(u)=g_2(t),\quad t\in (s_0,s+s_0),
	\end{cases}
	\end{equation*}	
	given by Proposition \ref{p:uhs}, restricted on the trapezoid domain 
	\begin{equation}\label{d:mflstar}
	\mfl^*(s,s_0):=\{ s_0<t<s+s_0,\ 0<x<L(t-\hatt)/\hatt  \},
	\end{equation}
	 coincides with the unique solution  to \eqref{e:uh}, as the limit of $\eh$-solutions to the one-sided initial-boundary value problem of system \eqref{e:hcl} associated with the initial-boundary condition (see Figure \ref{f:os})
	\begin{equation}
	\label{e:oib}
	\begin{cases}
	t=s_0:& u=\iu(x),\quad x\in \mfl_{s_0},\\
	x=0:&b_1(u)=g_1(t),\quad t\in (s_0,s+s_0).
	\end{cases}
	\end{equation}
		\begin{figure}[h!]
		\begin{minipage}[t]{0.42\linewidth} % 如果一行放2个图，用0.5，如果3个图，用0.33
			\centering
			\includegraphics[width=0.65\linewidth]{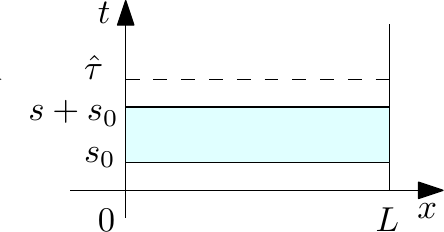}
			\caption{the initial-boundary value problem with two-sided boundary conditions}
			\label{f:ts}
		\end{minipage}%
		\qquad\qquad
		\begin{minipage}[t]{0.42\linewidth}
			\centering
			\includegraphics[width=0.65\linewidth]{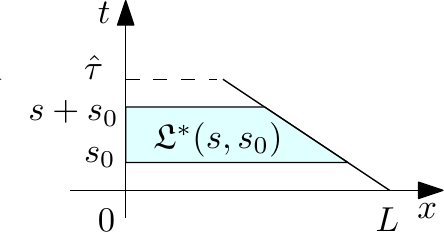}
			\caption{the initial value problem on $ \mfl^*(s,s_0) $ with one-sided boundary condition}
			\label{f:os}
		\end{minipage}
	\end{figure}
	
	Based on this, in what follows we construct a family of closed domain $ \md_t^h(\delta) \subset \L^1((0,L);\rn n)\times \L^1((0,T);\rn{n-m})$ with $ t\in [0,\hatt) $ for $ \delta$ and $ h $ sufficiently small, such that for each initial-boundary data $ (\iu,g_1) \in \md_0^h(\delta)$, the solution $ \uh=\uh(t,x) $ to system \eqref{e:uh} on $ \mfl $, provided by Proposition \ref{p:uhs} and associated with the initial-boundary data $ (\iu,g_1) $, %provided by Proposition \ref{p:uhs}, 
	satisfies that $ \uh(t,\cdot)\in \md_t^h(\delta) $ for any $ t\in (0,\hatt) $, if $ \uh $ is extended to the domain $ \D_{\hatt} $ by taking $ \uh(t,x)\equiv 0 $ for all $ (t,x)\in \D_{\hatt}\setminus\mfl $. 
	
	For any given $ s\in (0,\hatt) $, let $ \iu:(0,L)\to \rn n $ and $ g:(s,\hatt)\to \rn{n-m} $ be two piecewise constant functions satisfying $\iu(x)\equiv 0$ for all $ x\in(0,L)\setminus \mfl_s  $ (recall notation \eqref{d:mflr}) with $\displaystyle \tv{\iu}{x\in\mfl_{s}}+\tv{g}{s<t<\hatt}+|b_1(\iu(0+))-g(0+)| $ sufficiently small. Let $ I $  be the set consisting of all the waves determined by all the $ h $-Riemann problems on the lattice $ \{jh \}_{j\in \Z\cap (0,\frac{2\hatt-s}{2\hatt h})} $ and by all the homogeneous initial (resp. initial-boundary) Riemann problem for the remaining discontinuities of $ \iu $ on $ \mfl_s $ (resp. the boundary $ x=0 $).
	%Suppose that $ \ueh=\ueh $ is an $ \eh $-solution to the corresponding one-sided initial-boundary value problem on the trapezoid domain $\mfl^*(s,s_1):=\{ s<t<s+s_1,\ 0<x<L(t-\hatt)/\hatt  \}$  with the initial-boundary condition
	%\begin{equation*}
	%\begin{cases}
	%t=s: &u=\iu,\\
	%x=0: & b_1(u)=g_1(t)
	%\end{cases}
	%\end{equation*}
	%for $ s_1 $ sufficiently small before any interaction or a jump of $ g_1 $ occurs inside the time interval $ (s,s+s_1) $. %$ \mfl^*(s,s_1) $. 
	Then we can define $ V_h^L,\ Q_h^L $ and $ \Upsilon_h^L $ by
	\begin{align*}
	&V_h^L(\iu,g)=\sum_{\alpha\in I} K^L_{\alpha}|\sa|+C_1\tv{g(t)}{s<t<\hatt},\\
	& Q_h^L(\iu,g)=\sum_{\substack{\alpha,\beta\in I\\ (\alpha,\beta)\in\mathcal{A}}} |\sa\sb|,\\
	& \Upsilon_h^L(\iu,g)=V_h^L(\iu,g)+C_2 Q_h^L(\iu,g)
	%V_h^L(u,g)=V^L(\ueh(s+s_1/2,\cdot)),
	\end{align*}
	where the constants $ C_1 $ and $ C_2 $ are chosen as the same for $ V^L$ and $ \Upsilon^L $ used in Section \ref{ss:puhe}, respectively.
	%the sum of the first term takes over all fronts $ \alpha $ of  crossing the segment $ \mfl_{s+s_1/2} $ between the . 
	The main difference between $ \Upsilon^L_h $ and $ \Upsilon^L $ is that $ \Upsilon^L_h $ can be defined for any piecewise constant functions with small total variations, not necessary to be $ \eh $-solutions.
	
	For any given $t\in (0,\hatt)$, % and any given $g:(0,T)$ 
	we can define %the domain
	\begin{align*}
	\md_t^h(\delta):=\cl\Big\{& (u,g):\ u:(0,L)\to \rn{n}, \ g:(t,\hatt)\to
	\rn{n-m} \ \text{piecewise} \\
	&\  \text{ constant}, u(x)\equiv 0 \ \text{for all}\ x\in(0,L)\setminus \mflt, \ \text{and}\ \Upsilon_h^L(u,g)<\delta \Big \}
	\end{align*}
	depending on $ h,\ \delta $ and $ t $, where the notation ``cl" denotes the closure of the set. Therefore, for $\delta>0$ sufficiently small, there exists a unique process $P^h(t,t_0):\md_{t_0}^h(\delta)\to \md_{t+t_0}^h(\delta)$ for all $t_0,t\in (0,\hatt]$, such that for each $U_0=(\iu,g_1)\in\md_{t_0}^h(\delta)$ we have
	\[
	P^h(t,t_0)U_0=(E^h(t,t_0)U_0,\T(t,t_0)g_1),
	\]
	where $E^h(t,t_0)$ is the evolution operator whose trajectory $u^h(t,\cdot)=E^h(t,0)U_0$ %with $\bar U=(\iu,\go)\in \md_{t_0}^h(\delta)$ 
	has the following property: restricted on $ \mfl^*(t,t_0) $ (recall notation \eqref{d:mflstar}), $ \uh=\uh(t,x) $ solves the one-sided initial-boundary value problem \eqref{e:uh} and \eqref{e:oib} (with $ s_0=t_0 $ and $ s=t $), obtained as the limit of $\eh$-solutions as $\e\to 0$ with fixed $h$. In particular, we have
	\begin{enumerate}[(a)]
		\item for all $t_0\in (0,\hatt]$ and $U\in \md_{t_0}^h(\delta)$, $P^h(0,t_0)U=U$;
		\item for all $U\in \md_{t_0}^{h}$, $P^h(t+s,t_0)U=P^h(t,t_0+s)\circ P^h(s,t_0)U$;
		\item for any given $U=(u,g^u)\in \md_{t_0}^h, V=(v,g^v)\in \md_{t_0'}^h$, we have
		\begin{align*}
		\|E^h(t,t_0)U-E^h(t',t_0')V\|_{\L^1(\mfl_{t+t_0})}\le& L \bigg\{\|u-v \|_{\L^1(\mfl_{t_0})}+|t-t'|\\
		&+|t_0-t_0'|+\int\limits_{t_0}^{t_0+t}|g_u(\tau)-g_v(\tau)|d\tau \bigg\}.
		\end{align*}
	\end{enumerate}
	
	Now we construct the family of domains $ \{\md_t\}_{t\in[0,\hatt)} $ in Proposition \ref{p:uniq} as follows. There exist $\delta'$ and $\delta''$ with $\delta'<\delta''$, such that
	\[
	\md_t^{h'}(\delta')\subseteq \md_t^{h''}(\delta''), \qquad \forall h',h''>0,\  t\in (0,\hatt].
	\]
	%It is not restrictive to assume that $\md_{t_1}^h(\delta)\subseteq \md_{t_2}^h(\delta)$, since for each $U=(u,g)\in \md_{t_1}^h(\delta) $we can restrict $u $ on $\mfl_{t_2}$ and $g$ on $(t_2,\hatt)$.
	Noticing that $\md_{t_1}^h(\delta)\subseteq \md_{t_2}^h(\delta)$ for $ t_1<t_2 $, let
	\begin{equation*}
	\hat{\md}_t=\bigcap_{h>0} \md_t^h(\delta'), \quad  \hat{\md}_t'=\bigcap_{h>0}\cl \left\{  \bigcup_{k<h}  \md_t^k(\delta') \right\} 
	\end{equation*}
	and
	\begin{equation*}
	\check{\md}_{t}=\bigcap_{h>0}\md_t^h(\delta''), \quad \check{\md}_t'=\bigcap_{h>0}\cl \left\{  \bigcup_{k<h}  \md_t^k(\delta'') \right\}.
	\end{equation*}
	 It is easy to see that $\hat{\md}_t\subseteq \check{\md}_t$. Moreover, there exists a sequence of $\{ P^{h_i} \}$, such that for any given $U\in \check{\md}_t$, $P^{h_i}(s,t)$ converges as $ h_i\to 0 $ to a function $P(s,t)U$ with
	\begin{align}
	& P(s,t)U\in \hat{\md}_{t+s}',\quad \text{if}\  u\in \hat{\md}_{t},\\
	& P(s,t)U\in \check{\md}_{t+s}',\quad \text{if}\  u\in \check{\md}_{t}. \label{e:20}
	\end{align}
	Defining $ \md_t $ by
	\begin{align*}
	\md_t=\left\{ V\in \hat{\md}_t':\ \exists\ U_k\in \hat{\md}_0\ \text{and}\ t_k\downarrow t,\ \text{s.t.}\ V=\lim_{k\to \infty}P(t_k,0)U_k \right\},
	\end{align*}
	By the fact $ \md_{t}\subseteq \check{\md}_t $ and property (c) of $ P^h $ mentioned above, it is easy to check that (ii) and (iii) of Proposition \ref{p:uniq} hold. Then, (iv) can be deduced directly from (iii) and Proposition \ref{p:uhex}. By (ii) and \eqref{e:20} and passing to the limits in (a) and (b), we can show that (i) of Proposition \ref{p:uniq} holds for $ U\in \hat{\md_{t_0}} $ with $ t_0=0 $. It remains to check (i), that is, if $U\in \md_t$, then $P(s,t)U\in \md_{t+s}$. In fact, suppose $U\in \md_t$, then there exist $U_k \in \hat{\md}_0$ and $s_k\downarrow s$, such that $U=\displaystyle \lim_kP(s_k,0)U_{k}$. Thus we have $\displaystyle P(t,0)U= \lim_kP(t,s_k)P(s_k,0)U_k=\lim_{k}P(t+s_k,0)U_{k}$, which yields $P(t,0)U\in \md_{s+t}$. Therefore, we can take the limits in (a) and (b) and obtain that (i) holds for any given $ U\in \md_{t_0} $ with $ t_0 \in [0,\hatt)$.

%\begin{acknowledgements}
%If you'd like to thank anyone, place your comments here
%and remove the percent signs.
%\end{acknowledgements}

% BibTeX users please use one of
%\bibliographystyle{spbasic}      % basic style, author-year citations
%\bibliographystyle{spmpsci}      % mathematics and physical sciences
%\bibliographystyle{spphys}       % APS-like style for physics
%\bibliography{}   % name your BibTeX data base

\begin{thebibliography}{10}
	
	\bibitem{Amadori_viscosity}
	D. Amadori and R. M. Colombo:
	\newblock Viscosity solutions and standard riemann semigroup for conservation
	laws with boundary,
	\newblock {\em Rendiconti del Seminario Matematico della Universit\'a di
		Padova}, 99, 219--245 (1998).
	
	\bibitem{Amadori-BV-BL}
	D. Amadori, L. Gosse, and G. Guerra:
	\newblock Global {BV} entropy solutions and uniqueness for hyperbolic systems
	of balance laws,
	\newblock {\em Archive for Rational Mechanics and Analysis}, 162(4), 327--366
	(2002).
	
	\bibitem{Amadori-unique_bl}
	D. Amadori and G. Guerra:
	\newblock Uniqueness and continuous dependence for systems of balance laws with
	dissipation,
	\newblock {\em Nonlinear Analysis: Theory, Methods \& Applications}, 49(7), 987
	-- 1014 (2002).
	
	\bibitem{AnconaCoclite2005}
	F Ancona and G. M. Coclite:
	\newblock On the attainable set for temple class systems with boundary
	controls,
	\newblock {\em SIAM Journal on Control and Optimization}, 43(6), 2166--2190
	(2005).
	
	\bibitem{AnconaMarson1998}
	F. Ancona and A. Marson:
	\newblock On the attainable set for scalar nonlinear conservation laws with
	boundary control,
	\newblock {\em SIAM Journal on Control and Optimization}, 36(1), 290--312 (1998).
	
	\bibitem{Bressan2000}
	A. Bressan:
	\newblock {\em {Hyperbolic Systems of Conservation Laws: The One-dimensional Cauchy Problem}}.
	Oxford University Press, USA (2000).
	
	\bibitem{Bressan_boundary-control-CL}
	A. Bressan and G. M. Coclite:
	\newblock On the boundary control of systems of conservation laws.
	\newblock {\em SIAM Journal on Control and Optimization}, 41(2), 607--622 (2002).
	
	\bibitem{Colombo_general-balance-boundary}
	R. M. Colombo and G. Guerra:
	\newblock On general balance laws with boundary,
	\newblock {\em Journal of Differential Equations}, 248(5), 1017--1043 (2010).
	
	\bibitem{glass-burgers2007}
	O. Glass and S. Guerrero:
	\newblock On the uniform controllability of the burgers equation,
	\newblock {\em SIAM Journal on Control and Optimization}, 46(4), 1211--1238
	(2007).
	
	\bibitem{Glass2007}
	O. Glass:
	\newblock On the controllability of the {1-D} isentropic euler equation,
	\newblock {\em Journal of the European Mathematical Society}, 9(3), 427--486
	(2007).
	
	\bibitem{Glass2014}
	O. Glass:
	\newblock On the controllability of the non-isentropic {1-D} euler equation,
	\newblock {\em Journal of Differential Equations}, 257(3), 638 -- 719 (2014).
	
	\bibitem{Horsin}
	T. Horsin:
	\newblock On the controllability of the burger equation.
	\newblock {\em ESAIM: COCV}, 3, 83--95 (1998).
	
	\bibitem{Lax1987}
	P. D. Lax:
	\newblock {\em {Hyperbolic Systems of Conservation Laws and The Mathematical Theory of Shock Waves}}
	\newblock (CBMS-NSF Regional Conference Series in Applied Mathematics).
	Society for Industrial and Applied Mathematics, USA (1987).
	
	\bibitem{Li_controllability-book}
	T. Li:
	\newblock {\em {Controllability and Observability for Quasilinear Hyperbolic
			Systems}}.
	\newblock AIMS \& Higher Education Press (2010).
	
	\bibitem{Li_cam2002}
	T. Li and B. Rao:
	\newblock Local exact boundary controllability for a class of quasilinear
	hyperbolic systems.
	\newblock {\em Chinese Annals of Mathematics, Series B}, 23(2), 209--218 (2002).
	
	\bibitem{Li_exact-controllability-quasilinear}
	T. Li and B. Rao:
	\newblock Exact boundary controllability for quasi-linear hyperbolic systems.
	\newblock {\em SIAM Journal on Control and Optimization}, 41(6), 1748--1755
	(2003).
	
	\bibitem{Li_strong-weak}
	T. Li and B. Rao:
	\newblock Strong (weak) exact controllability and strong (weak) exact
	observability for quasilinear hyperbolic systems.
	\newblock {\em Chinese Annals of Mathematics, Series B}, 31(5):723--742 (2010).
	
	\bibitem{Li-Yu_OC}
	T. Li and L. Yu:
	\newblock One-sided exact boundary null controllability of entropy solutions to
	a class of hyperbolic systems of conservation laws.
	\newblock {\em Journal de Math\'{e}matiques Pures et Appliqu\'{e}es},
	107(1), 1--40 (2017).
	
	\bibitem{Li-Yu_boundary-value}
	T. Li and W. Yu:
	\newblock {\em {Boundary Value Problems for Quasilinear Hyperbolic Systems}}.
	\newblock Duke University Press, USA (1985).
	
	\bibitem{Leautaud-vanishing2012}
	M. Léautaud:
	\newblock Uniform controllability of scalar conservation laws in the vanishing
	viscosity limit,
	\newblock {\em SIAM Journal on Control and Optimization}, 50(3), 1661--1699
	(2012).
	
	\bibitem{Yu_CM2016}
	L. Yu:
	\newblock One-sided exact boundary null controllability of entropy solutions to
	a class of hyperbolic systems of conservation laws with constant
	multiplicity.
	\newblock To appear in Chinese Annals of Mathematics, Series B.
	
\end{thebibliography}

% Non-BibTeX users please use

%\begin{thebibliography}{}
%
% and use \bibitem to create references. Consult the Instructions
% for authors for reference list style.
%
%\bibitem{RefJ}
% Format for Journal Reference
%Author, Article title, Journal, Volume, page numbers (year)
% Format for books
%\bibitem{RefB}
%Author, Book title, page numbers. Publisher, place (year)
% etc
%\end{thebibliography}

\end{document}